\makeatletter\newcommand{\leqnomode}{\tagsleft@true}
\newcommand{\reqnomode}{\tagsleft@false}\makeatother
\newtheorem{Theorem}[equation]{Theorem}
\newtheorem{Proposition}[equation]{Proposition}
\newtheorem{Lemma}[equation]{Lemma}
\newtheorem{Corollary}[equation]{Corollary}
\theoremstyle{definition}
\newtheorem{Definition}[equation]{Definition}
\newtheorem{Example}[equation]{Example}
\newcommand{\MM}{\text{\sc m}}
\newcommand{\NN}{\text{\sc n}}
\definecolor{blue}{cmyk}{1.,1.,0.,0.63}
\definecolor{red}{cmyk}{0.,1.,1.,0.63}
\definecolor{green}{cmyk}{1.,0.,1.,0.63}
\definecolor{black}{cmyk}{1.,1.,1.,1.}
\renewcommand{\@fnsymbol}[1]
{\ensuremath{\ifcase#1\or $*$\or $**$\or $***$\or $****$\or $*****$
\else\@ctrerr\fi}}
\numberwithin{equation}{section}
\newcommand{\style}[1]{\text{\footnotesize{\sf #1}}}
\newcommand{\stylesmall}[1]{{\sf #1}}
\renewcommand{\dim}{\style{dim}}
\newcommand{\dimsmall}{\stylesmall{dim}}
\newcommand{\Dom}{\style{Dom}}
\newcommand{\Ht}{\style{Ht}}
\newcommand{\Jac}{\style{Jac}}
\newcommand{\Jacsmall}{\stylesmall{Jac}}
\renewcommand{\lim}{\style{lim}}
\renewcommand{\max}{\style{max}}
\renewcommand{\min}{\style{min}}
\newcommand{\minsmall}{\stylesmall{min}}
\renewcommand{\mod}{\style{mod}}
\newcommand{\mult}{\style{mult}}
\newcommand{\ord}{\style{ord}}
\newcommand{\ordsmall}{\stylesmall{ord}}
\newcommand{\proj}{\style{proj}}
\newcommand{\projsmall}{\stylesmall{proj}}
\newcommand{\Res}{\style{Res}}
\newcommand{\Spec}{\style{Spec}}
\renewcommand{\sup}{\style{sup}}
\newcommand{\isqrt}{{\scriptstyle{\sqrt{-1}}}}
\newcommand{\vf}{\vfill
\title{Explicit calculation of Siu's Effective Termination
\\
in Kohn's Algorithm for Special Domains in $\mathbb{C}^{3}$}
\author{Wei Guo Foo}
\address{Wei Guo FOO,
D\'{e}partement de Math\'{e}matiques B\^{a}timent 425, 
Facult\'{e} des Sciences d'Orsay Universit\'{e} Paris-Sud, 
F-91405 Orsay Cedex}
\email{wei-guo.foo@math.u-psud.fr}

\begin{document}
\maketitle
\begin{abstract}
In this article, we follow the arguments in a paper of Y-T. Siu to study the effective termination of Kohn's algorithm for special domains in $\mathbb{C}^{3}$. We make explicit the effective constants and generic conditions that appear there, and we obtain an explicit expression for the regularity of the Dolbeault laplacian for the $\overline{\partial}$-Neumann problem. Specifically, on a local 
peudoconvex domain of the special shape
\[
\Omega:=
\bigg\{(z_{1},z_{2},z_{3})\in\mathbb{C}^{3}:\ 
2\text{Re}\ z_{3}+
\sum_{i=1}^{\NN}|F_{i}(z_{1},z_{2})|^{2}<0
\bigg\}
\]
with holomorphic function germs $F_{1},\dots,F_{\NN}\in\mathcal{O}_{\mathbb{C}^{2},0}$ of finite intersection multiplicity
\[
s:=\dimsmall_{\mathbb{C}}\ 
\mathcal{O}_{\mathbb{C}^{2},0}
\big/
\langle 
F_{1},\dots, F_{\NN}
\rangle
<
\infty,
\]
we show that an $\varepsilon$-subelliptic regularity for $(0,1)$-forms holds whenever, just in terms of $s$,
\[
\varepsilon
\geqslant 
\frac{1}{
2^{(4s^{2}-1)s+3}
s^{2}(4s^{2}-1)^{4}
\binom{8s+1}{8s-1}}.
\]
\end{abstract}
\tableofcontents

\section{Introduction}

 Let $(z_{1},z_{2},z_{3})$ be holomorphic coordinates in $\mathbb{C}^{3}$. For some $\NN\geqslant 1$, let $F_{1}(z_{1},z_{2})$, \dots, $F_{\NN}(z_{1},z_{2})$ be holomorphic function germs of $\mathbb{C}^{3}$ vanishing at the origin. We shall study the ideal of subelliptic multipliers on special domains $\Omega\subseteq \mathbb{C}^{3}$ defined by 
\[
\Omega
=
\left\{
2\text{Re}\ z_{3}
+
\sum_{i=1}^{\NN}|F_{i}(z_{1},z_{2})|^{2}<0\right\}.
\]

The idea of subelliptic multipliers was conceived by Joseph J. Kohn in \cite{Kohn-1979} to study the $\overline{\partial}$-Neumann problem on pseudoconvex domains in $\mathbb{C}^{n}$. He constructed what is now known as the {\sl Kohn's algorithm} on subelliptic multipliers to give a geometric interpretation of the subelliptic estimates of the Dolbeault Laplacian. With the use of Diederich--Fornaess' theorem \cite{Diederich-Fornaess-1978}, he proved that the termination of Kohn's algorithm is equivalent to the absence of holomorphic curves passing through the origin in $b\Omega$. 

For special domains in $\mathbb{C}^{2}$, this is equivalent to the fact that in a neighbourhood of the origin, the intersection of the variety germs $\cap_{i=1}^{\NN}\{(z_{1},z_{2}):\ F_{i}(z_{1},z_{2})=0\}$ consists only of the origin. By a result in intersection theory, this means that 
\[
\dim_{\mathbb{C}}\ 
\mathcal{O}_{\mathbb{C}^{2},0}
\big/
\langle F_{1},\dots,F_{\NN}
\rangle
:=s
<
\infty.
\]

An important problem with the termination of Kohn's algorithm is its {\sl effectiveness}. Throughout this paper, we shall say that a certain quantity is {\sl effective}  if it can be expressed in terms of $s$. In~\cite{Catlin-DAngelo-2010},  John P. D'Angelo and David W. Catlin proved the effective termination of the algorithm for triangular systems, and raised an example where the termination fails to be effective.


In \cite{Siu-2010}, Yum-Tong Siu proved the effective termination of Kohn's algorithm from the view of local intersection theory in several complex variables to create multipliers with effective multiplicities. Based on his method, both the number of steps taken to terminate the algorithm, and the regularity of the Dolbeault laplacian are effective for special domains in $\mathbb{C}^{n}$. Here, we will follow the exposition in \cite{Siu-2010} for the case of dimension $3$. 

Let us briefly outline Siu's method. The first step of Kohn's algorithm allows only a linear combination of the $F_{i}$. One idea is to create {\sl generic} linear combinations
\[
A=\sum_{i=1}^{\NN}\lambda_{i}F_{i},\qquad
B=\sum_{i=1}^{\NN}\mu_{i}F_{i}
\]
whose intersection multiplicity
\[
\dim_{\mathbb{C}}\ 
\mathcal{O}_{\mathbb{C}^{2},0}
\big/
\langle A,B\rangle
\]
has an effective upper bound. The next step in Kohn's algorithm consists of taking the Jacobian $\Jac(A,B)$, and of letting  $\tilde{h}_{2}$ be the reduction of $\Jac(A,B)$. The holomorphic function $\tilde{h}_{2}$ has an effective multiplicity. Furthermore, there exists another generic linear combination $h_{1}:=\sum_{i=1}^{\NN}c_{i}F_{i}$ such that 
\[
\dim_{\mathbb{C}}\ 
\mathcal{O}_{\mathbb{C}^{2},0}
\big/
\langle \tilde{h}_{2},\ h_{1}\rangle
\]
has an effective upper bound. From $h_{1}$ and $\tilde{h}_{2}$, one may construct a holomorphic function $h_{2}(z_{1},z_{2})$ with an effective vanishing order $\lambda$ for $z_{2}\longmapsto h_{2}(0,z_{2})$ so that $h_{2}(h_{1}(z_{1},z_{2}),z_{2})$ is a subelliptic multiplier, hence up to a multiplication by a unit, $h_{2}(h_{1},z_{2})$ may be written as a Weierstrass polynomial 
\[
h_{2}(h_{1},z_{2})
=
z_{2}^{\lambda}
+
\sum_{1\leqslant j\leqslant \lambda}
a_{j}(h_{1})z_{2}^{\lambda-j}.
\]
The rest of the argument consists of applying Kohn's algorithm to the pre-multiplier $h_{1}$ together with the subelliptic multiplier $h_{2}(h_{1},z_{2})$. The algorithm terminates with an effective number of steps, because $h_{2}(h_{1},z_{2})$ is a polynomial with an effective degree $\lambda$. 

The purpose of this paper is to review some of the useful concepts in several complex variables, and to describe in greater detail the generic conditions wherever they appear. Then we will make explicit the effective constants and upper bounds that are found during the course of creating subelliptic multipliers, and we will apply these results to explicitly describe the regularity of the Dolbeault laplacian. The following is our main result:

\begin{Theorem}
Let $(z_{1},z_{2},z_{3})$ be holomorphic coordinates in $\mathbb{C}^{3}$ with $z_{i}=x_{i}+\isqrt y_{i}$. For some $\NN\geqslant 2$, let $F_{1}(z_{1},z_{2})$,\dots,$F_{\NN}(z_{1},z_{2})$ be holomorphic function germs in $\mathcal{O}_{\mathbb{C}^{2},0}$ vanishing at the origin such that 
\[
\dim_{\mathbb{C}}\ 
\mathcal{O}_{\mathbb{C}^{2},0}
\big/
\langle F_{1},\dots,F_{\NN}\rangle
:=
s
<\infty.
\]

Let $\Omega\subset \mathbb{C}^{3}$ be the domain defined by
\[
\Omega
=
\left\{
(z_{1},z_{2},z_{3})\in\mathbb{C}^{3}:\ 
2{\rm Re}\ z_{3}+
\sum_{i=1}^{\NN}|F_{i}(z_{1},z_{2})|^{2}
<0
\right\}.
\]

Then by Siu's method, Kohn's algorithm terminates  in at most $(4s^{2}-1)s$ steps. Moreover, for any $\phi\in\mathcal{D}_{0,1}(\Omega)$ with compact support,
\[
\||\phi|\|_{\varepsilon}^{2}
\lesssim
\|\overline{\partial}\phi\|^{2}
+
\|\overline{\partial}^{*}\phi\|^{2}
+
\|\phi\|^{2},
\]
where
\[
\varepsilon
\geqslant 
\frac{1}{2^{(4s^{2}-1)s+3}s^{2}(4s^{2}-1)^{4}\binom{8s+1}{8s-1}}.
\]
(See the next section for the definitions of $\mathcal{D}_{0,1}(\Omega)$ and of the tangential Sobolev norm $\||\cdot|\|_{\varepsilon}^{2}$.)

\end{Theorem}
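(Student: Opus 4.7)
The plan is to follow Siu's construction step by step, at each stage tracking the effective upper bound produced. I will organise the argument around the two quantitative pieces of the theorem: the number of Kohn steps and the final subelliptic gain $\varepsilon$.

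First I would carry out the construction of the ``initial'' multipliers. By Bezout-type bounds from local intersection theory, I choose generic scalars $\lambda_i,\mu_i\in\mathbb{C}$ so that $A=\sum\lambda_iF_i$ and $B=\sum\mu_iF_i$ satisfy $\dim_{\mathbb{C}}\mathcal{O}_{\mathbb{C}^2,0}/\langle A,B\rangle\leqslant s^2$, using the hypothesis $\dim_{\mathbb{C}}\mathcal{O}_{\mathbb{C}^2,0}/\langle F_1,\dots,F_{\NN}\rangle=s$. Then $\Jac(A,B)$ is a pre-multiplier; let $\tilde h_2$ be its radical reduction, which has effective multiplicity bounded in terms of $s$ (the key input being an effective \L ojasiewicz-type inequality). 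A further generic choice produces $h_1=\sum c_iF_i$ with $\dim_{\mathbb{C}}\mathcal{O}_{\mathbb{C}^2,0}/\langle\tilde h_2,h_1\rangle$ effectively bounded. From the pair $(h_1,\tilde h_2)$ I construct $h_2(z_1,z_2)$ whose restriction $z_2\mapsto h_2(0,z_2)$ vanishes to an order $\lambda$ that can be controlled in terms of $s$ alone; tracking the bounds through the Nullstellensatz-type arguments shows one may take $\lambda\leqslant s^2(4s^2-1)^4\binom{8s+1}{8s-1}$ (this is the exponent appearing inside the $\varepsilon$ formula). By the Weierstrass preparation theorem, $h_2(h_1,z_2)$ becomes, up to a unit, a monic polynomial of degree $\lambda$ in $z_2$.

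Next I would feed $(h_1,h_2(h_1,z_2))$ into Kohn's algorithm proper. The polynomial structure in $z_2$ forces the algorithm to terminate in a bounded number of steps: each iteration either lowers the $z_2$-degree of the current multiplier or reduces it to a unit after taking further Jacobians with $h_1$, and the total number of iterations is thus controlled by the product of $\lambda$ with the number of ``levels'' needed to resolve the $h_1$-direction. Counting carefully, as in Siu's argument, shows the termination occurs in at most $(4s^2-1)s$ steps, which is the first assertion of the theorem.

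Finally I would translate this algorithmic termination into the subelliptic estimate. Here the standard Kohn/Catlin machinery, recalled in the next section, says that each application of a generator step in the algorithm halves the $\varepsilon$-gain, while the ``polynomial'' multipliers of degree $\lambda$ contribute a factor essentially $1/\lambda$. Thus combining $N=(4s^2-1)s$ Kohn steps with the degree bound on $\lambda$ and an additional factor $2^3$ absorbing the base case (the initial $|z_3|$-pre-multiplier plus the transition from pre-multiplier to multiplier) yields
\[
\varepsilon\;\geqslant\;\frac{1}{2^{(4s^2-1)s+3}\,s^2(4s^2-1)^4\binom{8s+1}{8s-1}},
\]
which is the bound claimed.

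The main obstacle is not the qualitative picture but the quantitative bookkeeping: propagating the effective constants through (i) the generic linear combinations of the $F_i$, (ii) the multiplicity bound for the reduction $\tilde h_2$ of $\Jac(A,B)$, and (iii) the degree $\lambda$ of the Weierstrass polynomial. The subtle point is that at each generic choice one must verify that a nonempty Zariski-open set of parameters achieves the claimed intersection-theoretic bound, and then quantify that bound explicitly rather than merely asserting finiteness; this is what makes the exponent $s^2(4s^2-1)^4\binom{8s+1}{8s-1}$ appear instead of a smaller but ineffective constant.
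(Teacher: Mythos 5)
Your proposal reproduces the qualitative outline that the paper itself gives in its introduction, but the content of this theorem is precisely the quantitative bookkeeping, and on the two occasions where you commit to a concrete intermediate bound you get it wrong. First, the generic linear combinations $A,B$ satisfy $\dim_{\mathbb{C}}\mathcal{O}_{\mathbb{C}^2,0}/\langle A,B\rangle\leqslant 4p^2\leqslant 4s^2$ (via the \L ojasiewicz exponent $p\leqslant q\leqslant s$ and the generic-selection proposition), not $s^2$; the factor $4s^2-1$ pervading the final formula is exactly $\mult_0\Jac(A,B)\leqslant 4s^2-1$, obtained from the residue argument showing $\Jac(A,B)\notin\langle A,B\rangle$. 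With your bound $s^2$ you would be led to $s^2-1$ everywhere, so your own final formula does not follow from your own intermediate claims. Second, you assert that the vanishing order $\lambda$ of $y\mapsto h_2(0,y)$ is bounded by $s^2(4s^2-1)^4\binom{8s+1}{8s-1}$ and that ``this is the exponent appearing inside the $\varepsilon$ formula.'' In fact $\lambda\leqslant(4s^2-1)s$ (this is the proper-projection/Weierstrass bound, and it is what appears in the exponent of $2$ and in the step count); the quantity $s^2(4s^2-1)^4\binom{8s+1}{8s-1}$ is the multiplicative denominator $4\cdot s\cdot\eta\cdot\lambda\cdot(4s^2-1)^2$ assembled at the very end, where $\eta=(4s^2-1)\binom{8s+1}{8s-1}$ is the exponent for which $h_1^{\eta}\in\langle\partial h_1/\partial w_1,\tilde h_2\rangle$. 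You have conflated two different effective constants playing different roles.

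Beyond these, the derivation of $\varepsilon$ is missing rather than sketched: the paper obtains it by (a) the initial gain $\tfrac14$ for $dF_j\cdot\phi$; (b) passing from $\Jac(A,B)$ to its reduction $\tilde h_2$ at the cost of a root of order $\leqslant 4s^2-1$, giving $\tfrac{1}{4(4s^2-1)}$; (c) a $\lambda$-step induction on the derivatives $h_2^{(\nu)}$ of the Weierstrass polynomial, each step halving the gain, yielding $h_1^{\lambda\eta}$ as a multiplier with gain $\tfrac{1}{2^{\lambda}\cdot4(4s^2-1)}$; (d) the inclusion $w_i^{(4s^2-1)s\lambda\eta}\in\langle h_1^{\lambda\eta},\tilde h_2\rangle$ and the radical property, dividing the gain by $(4s^2-1)s\lambda\eta$; and (e) one final Jacobian, halving once more. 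Each of (b), (d) requires a specific effective ideal-membership statement (Brian\c con--Skoda for the gradient ideal, the bound $\binom{8s+1}{8s-1}$ on $\dim_{\mathbb{C}}\mathcal{O}_{\mathbb{C}^2,0}/\langle\partial F_i/\partial z_j\rangle$, and the colength bounds of the preparatory proposition), none of which appear in your argument. As written, the proposal asserts the final formula rather than deriving it.
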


\textbf{Acknowledgement:} This paper was written as part of the author's Ph.D thesis, who is grateful to his advisor Professor Jo\"{e}l Merker for suggesting this topic.

\section{The $\overline{\partial}$-Neumann Problem}

Let $\Omega\subseteq\mathbb{C}^{n}$ be an open domain in $\mathbb{C}^{n}$, and let $\mathcal{E}^{p,q}(\Omega)$ denote the set of smooth $(p,q)$-forms on $\Omega$. More explicitly, every element $\phi\in\mathcal{E}^{p,q}(\Omega)$ can be written in the form
\[
\phi
=
\sum_{\substack{(i_{1},\dots,i_{p})\in\mathbb{N}^{p},\\
1\leqslant i_{1}<\cdots <i_{p}\leqslant n}}\ 
\sum_{\substack{(j_{1},\dots,j_{q})\in\mathbb{N}^{q},\\ 1\leqslant j_{1}<\cdots <j_{q}\leqslant n}}
\phi_{i_{1},\dots,i_{p},j_{1},\dots, j_{q}}\
dz_{i_{1}}\wedge\cdots\wedge 
dz_{i_{p}}\wedge
d\overline{z}_{j_{1}}
\wedge\cdots\wedge
d\overline{z}_{j_{q}},
\]
where $\phi_{i_{1},\dots,i_{p},j_{1},\dots, j_{q}}\in\mathcal{E}^{0,0}(\Omega)=C^{\infty}(\Omega)$. For notational convenience, $\phi$ may be written as
\begin{eqnarray}
\label{eqn-a-standard-(p,q)-form}
\phi=
{\sum_{|I|=p}}'
{\sum_{|J|=q}}'
\phi_{IJ}\ 
dz_{I}\wedge d\overline{z}_{J}.
\end{eqnarray}
The notation $\sum'$ denotes the sum over increasing indices.
\begin{Definition}
Let $\mathcal{E}^{p,q}(\overline{\Omega})$ denote the following subset of $\mathcal{E}^{p,q}(\Omega)$:
\begin{eqnarray*}
\mathcal{E}^{p,q}(\overline{\Omega})
:=
&\bigg\{&
\phi\in \mathcal{E}^{p,q}(\Omega):\ 
\text{there exists a neighbourhood }V \text{ of }\overline{\Omega}\\
&&\text{and a smooth }\tilde{\phi}\in\mathcal{E}^{p,q}(V)\text{ such that }\tilde{\phi}|_{V}=\phi
\bigg\}.
\end{eqnarray*}
\end{Definition}

For any $f$, $g$ in $L^{2}_{(p,q)}(\Omega)$
\[
f={\sum_{|I|=p}}'{\sum_{|J|=q}}'f_{IJ}\ dz_{I}\wedge d\overline{z}_{J},\qquad
\text{and}
\qquad 
g={\sum_{|I|=p}}'{\sum_{|J|=q}}'g_{IJ}\ dz_{I}\wedge d\overline{z}_{J}
\]
with
\[
{\sum_{|I|=p}}'{\sum_{|J|=q}}'
\int_{\Omega}|f_{IJ}|^{2}\ dV<\infty
\qquad
\text{and}
\qquad
{\sum_{|I|=p}}'{\sum_{|J|=q}}'
\int_{\Omega}|g_{IJ}|^{2}\ dV<\infty,
\]
the metric $(-,-)$ on $L^{2}_{(p,q)}(\Omega)$ is defined by 
\[
(f,g):=
{\sum_{|I|=p}}'{\sum_{|J|=q}}'
\int_{\Omega}f_{IJ}\, \overline{g_{IJ}}\ d\lambda.
\]
Here $d\lambda$ denotes the Lebesgue measure on $\mathbb{C}^{n}$.

\subsection{The $\overline{\partial}$ Operator}
Let $\phi\in\mathcal{E}^{p,q}(\Omega)$ as in equation \eqref{eqn-a-standard-(p,q)-form}. The differential operator $\bar{\partial}$ is then a map $\bar{\partial}:\mathcal{E}^{p,q}(\Omega)\rightarrow \mathcal{E}^{p,q+1}(\Omega)$ defined by
\begin{eqnarray*}
\bar{\partial}\phi
&=&
\bar{\partial}
\left(
{\sum_{|I|=p}}'{\sum_{|J|=q}}'
\phi_{IJ}\
dz_{I}\wedge d\overline{z}_{J}
\right)\\
&=&
{\sum_{|I|=p}}'{\sum_{|J|=q}}'
\sum_{j=1}^{n}
\frac{\partial\phi_{IJ}}{\partial \overline{z}_{j}}\
d\overline{z}_{j}\wedge
dz_{I}\wedge
d\overline{z}_{J}.
\end{eqnarray*}

\begin{Definition}
Let $X$ and $Y$ be Banach spaces. An {\sl unbounded operator} $T$ on $X$ with target in $Y$ is consists of a linear subspace $\Dom(T)$ called the {\sl domain of $T$}, and a linear map
\[
T:\Dom(T)\rightarrow Y.
\]
The unbounded operator $T$ will be written as 
\[
(T,\Dom(T)):X\rightarrow Y.
\]
\end{Definition}

\begin{Definition}
Let $X$ and $Y$ be Banach Spaces. An unbounded operator $(T,\Dom(T)):X\rightarrow Y$ is {\sl closed} if the graph of $T$ is closed.
\end{Definition}

\begin{Definition}
Let $X$ and $Y$ be Banach spaces, and let 
\[
(T,\Dom(T)):X\rightarrow Y
\]
be an unbounded operator. Then the unbounded operator $T$ is {\sl densely defined} if $\Dom(T)$ is dense in $X$.
\end{Definition}

Note that even if $\phi\in L_{p,q}^{2}(\Omega)$, one may still define the $(p,q+1)$ form $\overline{\partial}\phi$ in the sense of currents. The space of $(p,q+1)$-currents contains the space $L_{p,q+1}^{2}(\Omega)$.

\begin{Definition}
Let $\overline{\partial}$ be the operator as above. Then $\Dom_{p,q}(\overline{\partial})$ denotes the following linear subspace of $L_{p,q}^{2}(\Omega)$:
\[
\Dom_{p,q}(\overline{\partial})
:=
\{\phi\in L_{p,q}^{2}(\Omega):\ 
\overline{\partial}\phi\in L_{p,q+1}^{2}(\Omega)\}.
\]
\end{Definition}
Clearly since $\Dom_{p,q}(\overline{\partial})$ contains the space of all $(p,q)$ forms on $\Omega$ with compact support, which forms a dense set in $L_{p,q}^{2}(\Omega)$, hence $\Dom_{p,q}(\overline{\partial})$ is a dense set.

The pair 
\[
(\overline{\partial},\Dom_{p,q}(\overline{\partial})):L_{p,q}^{2}(\Omega)
\rightarrow
L_{p,q+1}^{2}(\Omega)
\]
defines a closed, densely defined unbounded operator.

\subsection{The Hilbert Space adjoint of $\overline{\partial}$}
Before we define the Hilbert space adjoint $\overline{\partial}^{*}$ of $\overline{\partial}$, we first specify the domain of $\overline{\partial}^{*}$.

\begin{Definition}[$\Dom_{p,q}(\overline{\partial}^{*})$]
Let $\Dom_{p,q}(\overline{\partial}^{*})$ be the following linear subspace of $L_{p,q}^{2}(\Omega)$ :
\begin{eqnarray*}
&& \Dom_{p,q}(\overline{\partial}^{*})\\
&:=&
\left\{
\phi\in L_{p,q}^{2}(\Omega):\text{ the map }T_{\phi}:\Dom_{p,q}(\overline{\partial}^{*})\rightarrow \mathbb{C}\text{ defined by } T_{\phi}(u)=(\phi,\overline{\partial}u)\text{ is continuous}
\right\}.
\end{eqnarray*}
\end{Definition}

From the definition of the domain $\Dom_{p,q}(\overline{\partial}^{*})$, the action of $\overline{\partial}^{*}$ on $\Dom_{p,q}(\overline{\partial}^{*})$ may be defined as follows: let $T_{\phi}$ be the map in the definition. If $\phi\in\Dom_{p,q}(\overline{\partial}^{*})$, then linear map
\begin{eqnarray*}
T_{\phi}: \Dom_{p,q}(\overline{\partial}^{*}) &\rightarrow & 
\mathbb{C}\\
u &\mapsto & (\phi,\overline{\partial} u)
\end{eqnarray*}
is continuous on the subspace $\Dom_{p,q}(\Omega)\subseteq L_{p,q}^{2}(\Omega)$. By the Hahn-Banach theorem, there exists an extension 
\[
\tilde{T}_{\phi}:L_{p,q}^{2}(\Omega)
\rightarrow
L_{p,q}^{2}(\Omega)
\]
of $T_{\phi}$ to the whole of Hilbert space $L_{p,q}^{2}(\Omega)$. This extension is unique since $\Dom_{p,q}(\overline{\partial}^{*})$ is dense. Also, $\tilde{T}_{\phi}$ is a continuous linear operator. By Riesz representation theorem, there exists the unique element $\overline{\partial}^{*}\phi$ such that for all $u\in L_{p,q}^{2}(\Omega)$,
\[
\tilde{T}_{\phi}(u)
=
(\overline{\partial}^{*}\phi,u).
\]
If $u\in \Dom_{p,q}(\overline{\partial})$, then
\[
(\overline{\partial}^{*}\phi,u)
=
\tilde{T}_{\phi}(u)
=
T_{\phi}(u)
=
(\phi,\overline{\partial}u).
\] 

\begin{Definition}[Hilbert space adjoint of $\overline{\partial}$]
The Hilbert space adjoint $\overline{\partial}^{*}$ of $\overline{\partial}$ is an unbounded operator
\[
(\overline{\partial}^{*},\Dom_{p,q}\overline{\partial}^{*}):
L_{p,q}^{2}(\Omega)
\rightarrow
L_{p,q-1}^{2}(\Omega)
\]
such that for all $\phi\in\Dom_{p,q}(\overline{\partial}^{*})$ and $u\in \Dom_{p,q-1}(\overline{\partial})$, 
\[
(\overline{\partial}^{*}\phi,u)
=
(\phi,\overline{\partial}u).
\]
\end{Definition}
\subsection{The Dolbeault Laplacian $\Delta_{\overline{\partial}}$.}
The following unbounded operators $\overline{\partial}$ and $\overline{\partial}^{*}$ act in the following way
\[
\xymatrix{
L_{p,q-1}^{2}(\Omega) \ar@<1ex>[r]^-{\overline{\partial}} & 
L_{p,q}^{2}(\Omega) \ar@<1ex>[l]^-{\overline{\partial}^{*}}\ar@<1ex>[r]^-{\overline{\partial}}& 
L_{p,q+1}^{2}(\Omega)\ar@<1ex>[l]^-{\overline{\partial}^{*}}.
}
\]
As a result, there is an unbounded operator 
\[(\Delta_{\overline{\partial}},
\Dom_{p,q}(\Delta_{\overline{\partial}})):
L_{p,q}^{2}(\Omega)
\rightarrow
L_{p,q}^{2}(\Omega)
\]
called the Dolbeault laplacian
\[
\Delta_{\overline{\partial}}
=
\overline{\partial}\overline{\partial}^{*}
+
\overline{\partial}^{*}\overline{\partial},
\]
defined on
\begin{eqnarray*}
&& \Dom_{p,q}(\Delta_{\overline{\partial}})\\
&=&
\left\{
\phi\in L_{p,q}^{2}(\Omega):\ 
\phi\in\Dom_{p,q}(\overline{\partial})\cap
\Dom_{p,q}(\overline{\partial}^{*}),\ 
\overline{\partial}\phi\in \Dom_{p,q+1}(\overline{\partial}^{*}),\ 
\overline{\partial}^{*}\phi\in 
\Dom_{p,q-1}(\overline{\partial})
\right\}.
\end{eqnarray*}

\subsection{The Subelliptic estimate and Subelliptic multipliers}

\begin{Definition}[$\mathcal{D}_{p,q}(\Omega)$] The set $\mathcal{D}_{p,q}(\Omega)$ is defined to be
\[
\mathcal{D}_{p,q}(\Omega)
=
\Dom_{p,q}(\overline{\partial}^{*})
\cap
\mathcal{E}^{p,q}(\overline{\Omega}).
\]
\end{Definition}

\begin{Definition}[The Tangential Sobolev Norm]
Let $f(t_{1},\dots,t_{2n-1},r)\in\mathcal{S}(\mathbb{R}^{2n-1}\times\mathbb{R})$. The pseudodifferential operator of order $s$, denoted by $\Lambda^{s}$, is defined by
\[
\Lambda^{s}f
=
\frac{1}{(2\pi)^{\frac{2n-1}{2}}}
\int_{r=-\infty}^{0}
\int_{\mathbb{R}^{2n-1}}
e^{\isqrt\sum_{k=1}^{2n-1}t_{k}\tau_{k}}
\left(1+\sum_{k=1}^{2n-1}|\tau_{k}|^{2}\right)^{s/2}
\hat{f}(\tau_{1},\dots,\tau_{2n-1},r)d\tau\,dr,
\]
where
\[
\hat{f}(\tau_{1},\dots,\tau_{2n-1},r)
=
\frac{1}{(2\pi)^{\frac{2n-1}{2}}}
\int_{\mathbb{R}^{2n-1}}
e^{-\isqrt\sum_{k=1}^{2n-1}t_{k}\tau_{k}}
f(t_{1},\dots,t_{2n-1},r)dt.
\]
The tangential sobolev norm $\||\bullet |\|_{s}^{2}$ is defined by
\[
\||f|\|_{s}^{2}
=
\frac{1}{(2\pi)^{\frac{2n-1}{2}}}
\int_{r=-\infty}^{0}
\int_{\mathbb{R}^{2n-1}}
\left(1+\sum_{k=1}^{2n-1}|\tau_{k}|^{2}\right)^{\frac{s}{2}}
|\hat{f}(\tau_{1},\dots,\tau_{2n-1},r)|^{2}
d\tau\,dr.
\]
\end{Definition}

By \cite[Appendix,~Proposition~A.3.1]{Folland-Kohn-1972}, if $s> s'$, then for any $\varepsilon>0$, there exists a neighbourhood $V\subset \mathbb{R}^{2n}$ of the origin such that $\||u|\|_{s'}\leqslant \varepsilon \||u|\|_{s}$ for all $u$ supported in $V$.

\begin{Definition}[The Subelliptic Estimates]
Suppose that $\Omega\subset\subset\mathbb{C}^{n}$ is an open domain whose closure is compact, and whose boundary is smooth. Let $x\in\overline{\Omega}$. The $\overline{\partial}$-Neumann problem satisfies a subelliptic estimate on $(0,q)$ forms if there exists a neighbourhood $U\subseteq\mathbb{C}^{n}$ of $x$, and positive constants $c$, $\varepsilon$, such that for all $\phi\in\mathcal{D}_{0,q}(U\cap \Omega)$ with compact support,
\[
\||\phi|\|_{\varepsilon}^{2}
\leqslant
c\left(
\|\overline{\partial}\phi\|^{2}
+
\|\overline{\partial}^{*}\phi\|^{2}
+
\|\phi\|^{2}
\right).
\]
\end{Definition}

From here, we will adopt the following notation: we let $Q(\phi,\psi)$ denote the quadratic form
\[
Q(\phi,\psi)
=
(\overline{\partial}\phi,
\overline{\partial}\psi)
+
(\overline{\partial}^{*}\phi,
\overline{\partial}^{*}\psi)
+
(\phi,\psi).
\]

\begin{Definition}[Subelliptic multipliers]
Let $\Omega$ be a smoothly bounded pseudoconvex domain in $\mathbb{C}^{n}$. Let $x\in\overline{\Omega}$ be a point, and let $\mathcal{C}_{x}^{\infty}$ be the ring of germs of smooth functions at $x$. An element $g\in\mathcal{C}_{x}^{\infty}$ is a subelliptic multiplier on $(0,1)$ forms if there exists a neighbourhood $U\subseteq\mathbb{C}^{n}$ of $x$, and positive constants $c$, $\varepsilon$, such that for all $\phi\in \mathcal{D}_{0,q}(U\cap \Omega)$ with compact support,
\[
\||g\phi|\|_{\varepsilon}^{2}
\leqslant
cQ(\phi,\phi).
\]
\end{Definition}

\section{Kohn's Algorithm for subelliptic multipliers}

Let $(z_{1},\dots,z_{n},x_{n+1}+\isqrt y_{n+1})$ be a holomorphic coordinates of $\mathbb{C}^{n+1}$. Let $F_{1},\dots,F_{\NN}$ be holomorphic function germs vanising at the origin in $\mathbb{C}^{n+1}$. For convenience, we let $z:=(z_{1},\dots,z_{n})$. Let $r(z,z_{n+1},\overline{z},\overline{z}_{n+1})$ be the real analytic function defined by
\[
r(z,z_{n+1},\overline{z},\overline{z}_{n+1})
=
2\text{Re}(z_{n+1})+\sum_{j=1}^{\NN}|F_{j}(z)|^{2}
=
x_{n+1}+\sum_{j=1}^{\NN}|F_{j}(z)|^{2}.
\]
Let $\Omega$ be the open domain defined by
\[
\Omega=\{r<0\},
\]
and the boundary $b\Omega$ is the following set
\[
b\Omega=
\{r=0\}
\]
which is smooth.
Clearly, $0\in b\Omega$.

\begin{Definition}[Kohn's Algorithm for Special Domains]
Let $\mathcal{I}_{0}:=\langle F_{1},\dots,F_{\NN}\rangle$ be the ideal in $\mathcal{O}_{\mathbb{C}^{n},0}$ generated by the holomorphic function germs $F_{i}$'s. We associate with $\mathcal{I}_{0}$ with a sequence of radical ideals 
\[
\mathcal{I}_{1}\subseteq
\mathcal{I}_{2}\subseteq\cdots
\]
in $\mathcal{O}_{\mathbb{C}^{n},0}$ as follows:

\smallskip\noindent{\bf (i)}
Let $g_{1},\dots, g_{n}$ be linear combinations of the $F_{i}$
\[
g_{i}=\sum_{k=1}^{\NN}c_{ik}F_{k}
\eqno
{\scriptstyle{(c_{ik}\,\in\,\mathbb{C})}}.
\]
 Let $\Jac(g_{1},\dots,g_{n})$ be the Jacobian of the $g_{i}$ and define
\[
\mathcal{I}_{1}^{\#}
=
\langle
\Jac(g_{1},\dots,g_{n}):\ 
g_{i}\text{ is a linear combination of }F_{i}
\rangle.
\]
Then set $\mathcal{I}_{1}=\sqrt{\mathcal{I}_{1}^{\#}}$.
\end{Definition}

\smallskip\noindent{\bf (ii)} (Inductive Step)
Suppose that $\mathcal{I}_{k}$ has been constructed, let $\mathcal{I}_{k+1,\Jacsmall}^{\#}$ be the ideal generated by $\Jac(h_{1},\dots,h_{n})$ where each $h_{i}$ is either an element of $\mathcal{I}_{k}$, or is a linear combination $g_{k}$ of the $F_{i}$. Then let
\[
\mathcal{I}_{k+1}^{\#}
=
\mathcal{I}_{k+1,\Jacsmall}^{\#}+
\mathcal{I}_{k},
\]
and set $\mathcal{I}_{k+1}=\sqrt{\mathcal{I}_{k+1}^{\#}}$.

Here are some effects of Kohn's algorithm on $\varepsilon$ the subelliptic regularity of the multipliers.

\begin{Proposition}\label{Kohn-Property}
Let $f\in\mathcal{O}_{\mathbb{C}^{n},0}$ be a subelliptic multiplier and let $g\in\mathcal{O}_{\mathbb{C}^{n},0}$ be a holomorphic function germ. This means that at $0\in b\Omega$, there exists an open neighbourhood $U\subseteq\mathbb{C}^{n}$ of $0$, and strictly positive constants $c$, $\varepsilon$ such that for all $\phi\in\mathcal{D}_{0,1}(U\cap\Omega)$, one has
\[
\||f\phi|\|_{\varepsilon}^{2}
\leqslant 
cQ(\phi,\phi).
\]
\smallskip\noindent{\bf (i)}
Suppose there exists $N>0$ such that $|g|^{N}\leqslant |f|$, then $g$ is also a subelliptic multiplier and 
\[
\||g\phi|\|_{\varepsilon/N}^{2}
\leqslant
cQ(\phi,\phi).
\]
\smallskip\noindent{\bf (ii)}
If the $(0,1)$-form $\phi$ is written as $\phi=\sum_{k=1}^{n}\phi_{k}\ d\overline{z}_{i}$, one has
\[
\left\|\left|
\sum_{k=1}^{n}\frac{\partial f}{\partial z_{k}}\phi_{k}
\right|\right\|_{\varepsilon/2}^{2}
\leqslant c
Q(\phi,\phi).
\]
\smallskip\noindent{\bf (iii)}
Let $f_{1},\dots,f_{n}$ be subelliptic multipliers. Suppose there exists $\varepsilon>0$ such that for all $i$,
\[
\left\|\left|
\sum_{k=1}^{n}\frac{\partial f_{i}}{\partial z_{k}}\phi_{k}
\right|\right\|_{\varepsilon}^{2}
\leqslant c
Q(\phi,\phi),
\]
then 
\[
\||\Jac(f_{1},\dots,f_{n})\phi|\|_{\varepsilon}^{2}
\leqslant 
cQ(\phi,\phi).
\]
\smallskip\noindent{\bf (iv)}
For any $g\in\mathcal{O}_{\mathbb{C}^{n},0}$, one has
\[
\||gf\phi|\|_{\varepsilon}^{2}
\lesssim
Q(\phi,\phi).
\]
\end{Proposition}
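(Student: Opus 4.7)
The plan is to treat the four assertions in parallel, since they share a common backbone: estimates in the tangential Sobolev norm $\||\cdot|\|_\varepsilon$ are stable under multiplication by smooth functions, composition with $\bar\partial$ and $\bar\partial^*$ costs at most one full derivative (and often only half of one, via commutator/interpolation arguments), and the quadratic form $Q$ itself dominates the $L^2$ norm. Parts (i) and (iv) are soft consequences of these facts, while (ii) and (iii) carry the real content and will each require a separate integration-by-parts or multilinear-algebra argument.

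For \textbf{(iv)}, I would simply observe that multiplication by any smooth germ $g$ is a bounded operator on the tangential Sobolev spaces associated to $\mathcal{D}_{0,1}(U\cap\Omega)$, with operator norm controlled by finitely many derivatives of $g$ on a compact neighbourhood of $0$. Composing this bound with the hypothesis $\||f\phi|\|_\varepsilon^2\leqslant cQ(\phi,\phi)$ yields $\||gf\phi|\|_\varepsilon^2\lesssim Q(\phi,\phi)$. For \textbf{(i)}, I would interpolate logarithmically between two a priori inequalities: $\||f\phi|\|_\varepsilon^2\leqslant cQ(\phi,\phi)$ and $\|\phi\|^2\leqslant Q(\phi,\phi)$. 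The pointwise bound $|g|^N\leqslant |f|$ upgrades to $|g\phi|^{2N}\lesssim |f\phi|^2\cdot|\phi|^{2(N-1)}$, and a convexity inequality for the $\Lambda^s$-based norm (splitting the frequency integral between low and high tangential frequencies, trading one bound against the other) yields $\||g\phi|\|_{\varepsilon/N}^2\leqslant cQ(\phi,\phi)$. The factor $1/N$ in the Sobolev index is exactly the price of this interpolation.

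For \textbf{(ii)}, the key observation is that since $f$ is holomorphic, $\bar\partial f\equiv 0$, so $\bar\partial(f\phi)=f\,\bar\partial\phi$. On the other hand, a direct computation gives, modulo boundary terms which vanish for elements of $\mathcal{D}_{0,1}$,
\[
\bar\partial^*(f\phi)=f\,\bar\partial^*\phi-\sum_{k=1}^n\frac{\partial f}{\partial z_k}\phi_k.
\]
Rearranging, the sum $\sum_k(\partial f/\partial z_k)\phi_k$ equals $f\bar\partial^*\phi-\bar\partial^*(f\phi)$. The term $f\bar\partial^*\phi$ is an $L^2$ function bounded in norm by $\sqrt{Q(\phi,\phi)}$, which is easy. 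The delicate term is $\bar\partial^*(f\phi)$: applying $\Lambda^{\varepsilon/2}$ and testing against itself, one integrates by parts and commutes $\Lambda^{\varepsilon}$ past $\bar\partial^*$, producing the expression $\||f\phi|\|_\varepsilon^2$ from the hypothesis plus a commutator of strictly lower order; this commutator is absorbed into $\|\phi\|^2$ by the small-constants lemma cited from Folland--Kohn. The loss from $\varepsilon$ to $\varepsilon/2$ is exactly this half-derivative "Calder\'on-type" trade-off, and I expect this part to be the main technical obstacle, since tracking the commutators carefully while keeping the subelliptic index explicit is the most delicate bookkeeping in the proof.

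For \textbf{(iii)}, I would use linear algebra on the hypothesis. Writing $L_i(\phi):=\sum_{k}(\partial f_i/\partial z_k)\phi_k$ and viewing $(\partial f_i/\partial z_k)_{i,k}$ as an $n\times n$ matrix $M$ acting on the column vector $(\phi_k)$, Cramer's rule gives
\[
\det(M)\,\phi_k=\sum_{i=1}^n(-1)^{i+k}M_{ik}^{\text{cof}}\,L_i(\phi),
\]
where each cofactor $M_{ik}^{\text{cof}}$ is a polynomial in the smooth functions $\partial f_i/\partial z_j$, hence a smooth germ. Since $\Jac(f_1,\dots,f_n)=\det(M)$, this expresses each component of $\Jac(f_1,\dots,f_n)\,\phi$ as a finite sum of smooth multiples of the $L_i(\phi)$. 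Applying the boundedness of smooth multiplication on $\||\cdot|\|_\varepsilon$ (as in (iv)) and then the hypothesis on each $L_i(\phi)$ yields $\|| \Jac(f_1,\dots,f_n)\phi|\|_\varepsilon^2\leqslant cQ(\phi,\phi)$, which is the desired estimate.
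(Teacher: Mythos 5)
Your proposal is correct in approach, but note that the paper itself only writes out a proof of \textbf{(iv)}, deferring \textbf{(i)}--\textbf{(iii)} to D'Angelo's book and Kohn's 1979 paper. For \textbf{(iv)} you argue exactly as the paper does (restrict to a smaller neighbourhood where $g$ is defined, and use that multiplication by a smooth bounded germ is bounded on the tangential $\varepsilon$-norm; the paper phrases this merely as ``$g$ is bounded'', and your version, which invokes finitely many derivatives of $g$, is actually the more accurate statement of what is needed for $\varepsilon>0$). Your arguments for \textbf{(ii)} and \textbf{(iii)} are precisely the standard ones from the cited references: the Leibniz identity $\bar\partial^{*}(f\phi)=f\bar\partial^{*}\phi-\sum_{k}(\partial f/\partial z_{k})\phi_{k}$ followed by a Cauchy--Schwarz pairing that trades the $\varepsilon$-norm of $f\phi$ against the $L^{2}$ data in $Q(\phi,\phi)$ (which is exactly why the index halves, and why the ``easy'' term $f\bar\partial^{*}\phi$ only ever needs its $L^{2}$ norm, since it sits in the zero-order slot of the pairing), and Cramer's rule expressing $\Jac(f_{1},\dots,f_{n})\phi_{k}$ as cofactor combinations of the $L_{i}(\phi)$.

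The one soft spot is \textbf{(i)}. The mechanism you describe --- splitting the frequency integral between low and high tangential frequencies --- does not directly work, because the hypothesis $|g|^{N}\leqslant|f|$ is a pointwise bound in physical space, and tangential Sobolev norms of positive order are not monotone under pointwise domination (so the upgrade $|g\phi|^{2N}\lesssim|f\phi|^{2}|\phi|^{2(N-1)}$ cannot be fed into the $\Lambda^{\varepsilon}$-integral as written). The standard proof is instead an iterated commutator and Cauchy--Schwarz argument: one first establishes $\||h\phi|\|_{s/2}^{2}\lesssim\||\,|h|^{2}\phi|\|_{s}\,\|\phi\|+\|\phi\|^{2}$ for $0<s\leqslant1$ by moving $\Lambda^{2s}$ past $\bar h$ (the commutator has order $2s-1\leqslant0$ and is absorbed into $\|\phi\|^{2}$), and then iterates to descend from $|g|^{N}$ to $g$, which is exactly where the index $\varepsilon/N$ comes from. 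Your conclusion and the factor $1/N$ are right; only the advertised mechanism needs to be replaced by this commutator scheme (or by a citation, as the paper does).
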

\begin{proof}
For properties {\bf (i)} to {\bf (iii)}, see \cite{DAngelo-Book-1993}. For the last property, we may refer to \cite[p.~94, Proposition 4.7(D)]{Kohn-1979}\footnote{For proof, see page 97}. We will give a summary of the proof of the last property. Given $f\in\mathcal{O}_{\mathbb{C}^{n}}(U)$ for some open neighbourhood $U\subseteq \mathbb{C}^{n}$ of the origin, there exists $\varepsilon>0$ such that for all $\phi\in\mathcal{D}_{0,1}(U\cap \Omega)$, one has
\[
\||f\phi|\|_{\varepsilon}^{2}
\lesssim
Q(\phi,\phi).
\]
By remark in \cite[p~93, Section~4, Paragraph~2]{Kohn-1979}, for any $V\subseteq U$ a open subset of $U$, the same $\varepsilon>0$ will satisfy the property that for all $\phi\in\mathcal{D}_{0,1}(V\cap \Omega)$,
\[
\||f|_{V\cap \overline{\Omega}}\ \phi|\|_{\varepsilon}^{2}
\lesssim
Q(\phi,\phi).
\]
Given $g\in\mathcal{O}_{\mathbb{C}^{n},0}$, for some $V\subseteq U$, $gf\in\mathcal{O}_{\mathbb{C}^{n}}(V)$. Upon restriction to a smaller open set, $g$ is bounded on $\overline{V}$. For any $\phi\in\mathcal{D}_{0,1}(V\cap\Omega)$, its support is contained in $V\cap \overline{\Omega}$. Hence 
\[
\||gf\phi|\|_{\varepsilon}^{2}
\lesssim
\||f\phi|\|_{\varepsilon}^{2}
\lesssim 
Q(\phi,\phi).\qedhere
\]
\end{proof}
\section{Local Geometry of Complex Spaces and Local Intersection Theory}

\subsubsection{}\label{LAG-para-1} Throughout this section, we will study study the geometry of analytic varieties near the origin.

\subsubsection{ }\label{LAG-para-2} Let $\mathcal{O}_{\mathbb{C}^{n},0}$ denote the ring of holomorphic function germs at the origin. It can be canonically identified with $\mathbb{C}\{z_{1},\dots, z_{n}\}$ the ring of convergent power series.

\subsubsection{}\label{LAG-para-3} The ring $\mathcal{O}_{\mathbb{C}^{n},0}$ is local and let $\mathfrak{m}$ denote its unique maximal ideal, which can be characterised by one of the following equivalent properties:

\smallskip\noindent{\bf (i)}
\[
\mathfrak{m}:=
\{f\in\mathcal{O}_{\mathbb{C}^{n},0}:\ f(0)=0\};
\]

\smallskip\noindent{\bf (ii)}
\[
\mathfrak{m}=
\langle z_{1},\dots, z_{n}\rangle;
\]

\smallskip\noindent{\bf (iii)}
every holomorphic function germ $f$ may be written as $f=\sum_{k\geq 1}f_{k}$ a sum of homogeneous polynomials $f_{k}$ of degree $k$.

\subsubsection{}\label{LAG-para-4} For each $l\in\mathbb{N}$, we define $\mathfrak{m}^{l+1}$ recursively by
\begin{eqnarray*}
\mathfrak{m}^{l+1} &=& 
\mathfrak{m}\cdot \mathfrak{m}^{l}\\
&=&
\left\{
\sum_{k=1}^{N}f_{k}g_{k}:\ 
N\in\mathbb{N},\ 
f_{k}\in\mathfrak{m},\ 
g_{k}\in\mathfrak{m}^{l}
\right\}.
\end{eqnarray*}
For any fixed $l\geqslant 1$, the following conditions are equivalent:

\smallskip\noindent{\bf (i)} $h\in\mathfrak{m}^{l}$;

\smallskip\noindent{\bf (ii)} for each $(\alpha_{1},\dots,\alpha_{n})\in\mathbb{N}^{n}$ such that $\alpha_{1}+\cdots+\alpha_{n}\leqslant 
l-1$,
\[
(\partial_{z_{1}}^{\alpha_{1}}
\cdots
\partial_{z_{n}}^{\alpha_{n}}h)
(0)=0;
\]

\smallskip\noindent{\bf (iii)} 
\[
\mathfrak{m}^{l}=
\langle
z_{1}^{k_{1}}\cdots 
z_{n}^{k_{n}}:\ 
k_{1}+\cdots k_{n}=l
\rangle;
\]

\smallskip\noindent{\bf (iv)}  
$h=\sum_{k\geqslant l}h_{k}$ where either $h_{k}$ vanishes or is a homogeneous polynomial of degree $k$.

\subsubsection{Multiplicity} \label{LAG-para-5}
\begin{Definition}\label{LAG-def-Multiplicity}
Let $h\in\mathcal{O}_{\mathbb{C}^{n},0}$ be a holomorphic function germ, which can be written as
\[
h=\sum_{k=0}^{\infty}h_{k}
\]
a sum of homogeneous polynomials $h_{k}$ of degree $k$. The multiplicity of $h$, which will be denoted by $\mult_{0}\ h$, is the smallest positive integer $k$ for which $h_{k}\not\equiv 0$.
\end{Definition}

\subsubsection{}\label{LAG-para-6} By Paragraph \ref{LAG-para-4}((i)$\iff$(iv)), the holomorphic function $h$ lies in  $\mathfrak{m}^{l}$ if and only if $\mult_{0}\ h\geqslant l$. In section 5, we will study the  geometric characterisation of multiplicity of a holomorphic function, and the extension of this notion to certain ideals.

\subsection{Local Analytic Geometry}

\subsubsection{}\label{LAG-para-1-1} In this subsection, we let $F_{1}$,\dots, $F_{\NN}$ be holomorphic function germs in $\mathcal{O}_{\mathbb{C}^{n},0}$ vanishing at the origin.  For easier exposition, we will not specify the domain of definition of the holomorphic function germs.

\subsubsection{Local Analytic Set, Germs of Analytic Space} \label{LAG-para-1-3}

\begin{Definition}\label{LAG-def-locally analytic}
A set $X\subseteq \mathbb{C}^{n}$ is {\sl locally analytic} if for any point $p\in X$, there exists an open subset $V$ of $p$ in $\mathbb{C}$, and finitely many holomorphic functions $f_{1}$,\dots,$f_{s}$ defined on $V$, such that 
\[
X\cap V=\{x\in V:\ f_{1}(x)=\cdots=
f_{s}(x)=0\}.
\]
\end{Definition}

\begin{Definition}\label{LAG-def-Germ of Analytic Space}
A {\sl germ of analytic space} $(X,0)$ is a germ at $0$ of a locally analytic subset of $\mathbb{C}^{n}$.
\end{Definition}

\subsubsection{}\label{LAG-para-1-4} Any germ of an analytic space $(X,0)$ may be uniquely written as 
\[
(X,0)=(X_{1},0)\cup\cdots\cup (X_{r},0)
\]
a union of irreducible components\footnote{A germ of an analytic space $(X,x)$ is irreducible if whenever $(X,x)=(X_{1},x)\cup (X_{2},x)$ with $(X_{1},x)$ and $(X_{2},x)$ germs of analytic spaces, either $(X,x)=(X_{1},x)$ or $(X,x)=(X_{2},x)$.}, each of which is also a germ of an analytic space (\cite[Corollary~3.4.18, p~118]{deJong-Pfister-2000}).

\subsubsection{$(V(F),0)$, $(V(\mathcal{I}_{F}),0)$ and $\mathcal{I}(X,0)$.}\label{LAG-para-1-5}

\begin{Definition}\label{LAG-def-V(F)}
Let $F\in\mathcal{O}_{\mathbb{C}^{n},0}$. The germ of an analytic hypersurface $(V(F),0)$ is defined as follows. Let $U$ be an open neighbourhood of the origin on which $F$ seen as a power series converges. Consider $V(F)=\{p\in U:\ F(p)=0\}$. Then $(V(F),0)$ is the germ of $V(F)$ at  zero, and is called the {\sl zero set} of $F$.
\end{Definition}
 
\begin{Definition}\label{LAG-def-V(Ideal)}
Let $\mathcal{I}_{F}=\langle F_{1},\dots, F_{\NN}\rangle$ be an ideal of $\mathcal{O}_{\mathbb{C}^{n},0}$. The germ of analytic space $(V(\mathcal{I}_{F}),0)$ is defined by
\[
(V(\mathcal{I}_{F}),0)=
\bigcap_{i=1}^{\NN}(V(F_{i}),0).
\]
\end{Definition}

\begin{Definition}\label{LAG-def-I(X,0)}
Let $(X,0)$ be a germ of an analytic space. Then define
\[
\mathcal{I}(X,0)
=
\{f\in\mathcal{O}_{\mathbb{C}^{n},0}:\ (X,0)\subseteq (V(f),0)\}.
\]
\end{Definition}

\subsubsection{Properties of $(V(\mathcal{I}_{F}),0)$ and $\mathcal{I}(X,0)$}\label{LAG-para-1-6}

Let $F_{1},\dots, F_{\NN}$ and $G_{1},\dots, G_{\MM}$ be holomorphic function germs in $\mathcal{O}_{\mathbb{C}^{n},0}$. Let $\mathcal{I}_{F}=\langle F_{1},\dots, F_{\NN}\rangle$ and $\mathcal{I}_{G}=\langle G_{1},\dots,G_{\MM}\rangle$ be the corresponding ideals they generate. Let $(X,0)$ and $(Y,0)$ be germs of analytic spaces.

\smallskip\noindent{\bf (i)} 
 $\mathcal{I}_{F}\subseteq \mathcal{I}_{G}$ implies that $(V(\mathcal{I}_{G}),0)\subseteq
(V(\mathcal{I}_{F}),0)$;

\smallskip\noindent{\bf (ii)} $(X,0)\subseteq (Y,0)$ implies that $\mathcal{I}(Y,0)\subseteq \mathcal{I}(X,0)$;

\smallskip\noindent{\bf (iii)} 
for any $k\in\mathbb{N}_{\geqslant 1}$, and for any ideal $\mathcal{I}_{F}$, $(V(\mathcal{I}_{F}^{k}),0)=
(V(\mathcal{I}_{F}),0)$;

\smallskip\noindent{\bf (iv)} for any germ of analytic space $(X,0)$, $(V(\mathcal{I}(X,0)),0)=(X,0)$

\smallskip\noindent{\bf (v)} (Nullstellensatz) $\mathcal{I}(V(\mathcal{I}_{F}),0))=
\sqrt{\mathcal{I}_{F}}$.

For ease of notation, let $V(F_{1},\dots, F_{\NN})$ or $V(\mathcal{I}_{F})$ denote $(V(\mathcal{I}_{F}),0)$.

\subsection{Local Intersection Theory I}

\subsubsection{ }\label{LAG-para-2-1} We begin with the characterisation of complete intersections of germs of analytic varieties at the origin.

\begin{Theorem}\label{LAG-thm-characterisations of complete intersection 1}
Let $F_{1}$,\dots, $F_{\NN}$ be holomorphic function germs in $\mathcal{O}_{\mathbb{C}^{n},0}$ at the origin. The following statements are equivalent.

\smallskip\noindent{\bf (i)} 
$V(F_{1},\dots,F_{\NN})=\{0\}$;

\smallskip\noindent{\bf (ii)} there exists a positive integer $q\geqslant 1$ such that $\mathfrak{m}^{q}\subseteq \mathcal{I}_{F}$;

\smallskip\noindent{\bf (iii)} 
the number
\[
\dim_{\mathbb{C}}\ 
\mathcal{O}_{\mathbb{C}^{n},0}
\big/
\mathcal{I}_{F}
=: s
\]
is finite;

\smallskip\noindent{\bf (iv)} there exists a positive integer $p$ such that locally  
\[
|z|^{p}\lesssim
\sum_{i=1}^{\NN}|F_{i}|.
\]
\end{Theorem}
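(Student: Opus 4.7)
The plan is to prove the cycle of implications $(\text{i}) \Rightarrow (\text{ii}) \Rightarrow (\text{iii}) \Rightarrow (\text{ii})$ and separately close the loop by $(\text{ii}) \Rightarrow (\text{iv}) \Rightarrow (\text{i})$. All four conditions are standard characterisations of a zero-dimensional ideal at the origin, so each implication reduces to a short algebraic or analytic argument.

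First, for $(\text{i}) \Rightarrow (\text{ii})$ I would invoke the local Nullstellensatz recorded in \ref{LAG-para-1-6}(v): since $V(\mathcal{I}_{F})=\{0\}$, we have $\sqrt{\mathcal{I}_{F}}=\mathcal{I}(\{0\},0)=\mathfrak{m}$. In particular each coordinate $z_{j}$ lies in $\sqrt{\mathcal{I}_{F}}$, so there is $q_{j}\geqslant 1$ with $z_{j}^{q_{j}}\in\mathcal{I}_{F}$. Taking $q=n\,\max_{j}q_{j}$, every monomial of degree $\geqslant q$ is divisible by some $z_{j}^{q_{j}}$, hence $\mathfrak{m}^{q}\subseteq\mathcal{I}_{F}$ by the description of $\mathfrak{m}^{q}$ in Paragraph~\ref{LAG-para-4}(iii).

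Next, $(\text{ii}) \Rightarrow (\text{iii})$ is immediate: $\mathcal{O}_{\mathbb{C}^{n},0}/\mathcal{I}_{F}$ is a quotient of $\mathcal{O}_{\mathbb{C}^{n},0}/\mathfrak{m}^{q}$, and the latter is a finite-dimensional $\C$-vector space spanned by the monomials of degree $<q$. For $(\text{iii}) \Rightarrow (\text{ii})$, the descending chain of $\C$-subspaces
\[
\mathfrak{m}/\mathcal{I}_{F}\ \supseteq\ \mathfrak{m}^{2}/\mathcal{I}_{F}\ \supseteq\ \mathfrak{m}^{3}/\mathcal{I}_{F}\ \supseteq\ \cdots
\]
must stabilise at some $k$, so $\mathfrak{m}^{k}=\mathfrak{m}^{k+1}+\mathcal{I}_{F}\cap\mathfrak{m}^{k}$; applying Nakayama's lemma to the finitely generated $\mathcal{O}_{\mathbb{C}^{n},0}$-module $\mathfrak{m}^{k}/(\mathcal{I}_{F}\cap\mathfrak{m}^{k})$ gives $\mathfrak{m}^{k}\subseteq\mathcal{I}_{F}$.

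For $(\text{ii}) \Rightarrow (\text{iv})$, since each $z_{j}^{q}\in\mathcal{I}_{F}$, one can write $z_{j}^{q}=\sum_{i=1}^{\NN}g_{ij}(z)\,F_{i}(z)$ with $g_{ij}$ holomorphic on some common polydisc around $0$; bounding the $g_{ij}$ uniformly there gives $|z_{j}|^{q}\lesssim\sum_{i}|F_{i}|$, and then
\[
|z|^{q}\ \leqslant\ n^{q/2}\max_{j}|z_{j}|^{q}\ \lesssim\ \sum_{i=1}^{\NN}|F_{i}|,
\]
so $p=q$ works. Finally $(\text{iv}) \Rightarrow (\text{i})$ is trivial: on the common zero set $|z|^{p}\lesssim 0$ forces $z=0$, and $0\in V(\mathcal{I}_{F})$ because each $F_{i}$ vanishes there by hypothesis. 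The only step requiring genuine input is $(\text{i}) \Rightarrow (\text{ii})$, which rests on the Nullstellensatz already cited; the remaining implications are either elementary commutative algebra (Nakayama) or a direct modulus estimate. I do not anticipate any real obstacle.
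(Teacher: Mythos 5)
Your proof is correct: although your list of implications is not a single cycle, the directed graph $\{(\mathrm{i})\Rightarrow(\mathrm{ii}),\ (\mathrm{ii})\Rightarrow(\mathrm{iii}),\ (\mathrm{iii})\Rightarrow(\mathrm{ii}),\ (\mathrm{ii})\Rightarrow(\mathrm{iv}),\ (\mathrm{iv})\Rightarrow(\mathrm{i})\}$ is strongly connected, so all four statements are equivalent. However, your route differs from the paper's in the two places that matter. First, the paper proves (iii)$\Rightarrow$(i) directly, by exhibiting for each coordinate $z_{i}$ a linear dependence of $1,z_{i},\dots,z_{i}^{k_{i}}$ modulo $\mathcal{I}_{F}$ and concluding that $V(\mathcal{I}_{F})$ is finite; you instead prove (iii)$\Rightarrow$(ii) by stabilising the descending chain of images of $\mathfrak{m}^{k}$ in $\mathcal{O}_{\mathbb{C}^{n},0}/\mathcal{I}_{F}$ and applying Nakayama --- an argument the paper does use, but later and for a different purpose (to get $q\leqslant s$ in Proposition \ref{LAG-prop-Relations between the intersection invariants}). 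Second, and more significantly, the paper's hard direction is (iv)$\Rightarrow$(ii), established via Skoda's $L^{2}$ division theorem applied to the monomials $z^{\gamma}$ with $|\gamma|=(n+2)p$; you bypass Skoda entirely by observing that (iv)$\Rightarrow$(i) is trivial (the estimate forces the common zero set to be $\{0\}$, using the standing hypothesis $F_{i}(0)=0$). Your argument is therefore more elementary and self-contained, and it is a complete proof of the theorem \emph{as stated}. What it does not recover is the quantitative content that the paper extracts from its Skoda argument, namely the effective inclusion $\mathfrak{m}^{(n+2)p}\subseteq\mathcal{I}_{F}$ of Corollary \ref{LAG-cor-of Henri Skoda}, which feeds the bound $q\leqslant(n+2)p$ and hence the explicit constants that are the point of the paper; so your proof could replace the paper's only for the qualitative equivalence, not for the effectivity chain built on top of it.
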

\begin{proof}
The proof proceeds in the following manner: (i)$\implies$(ii)$\implies$ (iii)$\implies$(i), and (ii)$\iff$(iv).

For (i)$\implies$(ii), since $V(F_{1},\dots,F_{\NN})=\{0\}=V(\mathfrak{m})$, there is an equality of ideals $\mathcal{I}(V(F_{1},\dots,F_{\NN}))
=
\mathcal{I}(V(\mathcal{\mathfrak{m}}))$. By Nullstellensatz, therefore  $\mathfrak{m}=\sqrt{\mathfrak{m}}= \sqrt{\mathcal{I}_{F}}$. Hence there exists $q\in\mathbb{N}_{\geqslant 1}$ such that $\mathfrak{m}^{q}\subseteq\mathcal{I}_{F}$.

For (ii)$\implies$ (iii), the condition that $\mathfrak{m}^{q}\subseteq\mathcal{I}_{F}$ implies that there is a surjective map of $\mathbb{C}$-vector space
\begin{eqnarray*}
\mathcal{O}_{\mathbb{C}^{n},0}
\big/\mathfrak{m}^{q}
&\longrightarrow &
\mathcal{O}_{\mathbb{C}^{n},0}
\big/\mathcal{I}_{F}\\
f\ \mod\ \mathfrak{m}^{q} &\longmapsto & 
f\ \mod\ \mathcal{I}_{F}.
\end{eqnarray*}
Hence, 
\[
\dim_{\mathbb{C}}\ \mathcal{O}_{\mathbb{C}^{n},0}
\big/
\mathcal{I}_{F}
\leqslant
\dim_{\mathbb{C}}\ \mathcal{O}_{\mathbb{C}^{n},0}
\big/
\mathfrak{m}^{q},
\]
and the proof is complete since $\dim_{\mathbb{C}}
\mathcal{O}_{\mathbb{C}^{n},0}\big/
\mathfrak{m}^{q}$ is always finite for $q\in\mathbb{N}_{\geqslant 1}$.

For (iii)$\implies $ (i), it is needed to show that the set 
\[
\{(\alpha_{1},\dots,\alpha_{n})\in\mathbb{C}^{n}:\ 
F_{k}(\alpha_{1},\dots,\alpha_{n})=0
\text{ for all }1\leqslant k\leqslant \NN\}
\]
is finite. To this effect, it suffices to show that there can only be finitely many choices for each $\alpha_{i}$. Since $
\mathcal{O}_{\mathbb{C}^{n},0}
\big/\mathcal{I}_{F}$ is finite dimensional, for each $1\leqslant i\leqslant n$, there exists $k_{i}\in\mathbb{N}_{\geqslant 1}$ such that the classes
\[
\{
1,\ z_{i},\ \dots,\ z_{i}^{k_{i}}
\}
\]
form a linearly dependent set in $\mathcal{O}_{\mathbb{C}^{n},0}
\big/
\mathcal{I}_{F}$. Hence there exist constants $\{c_{i,0},\dots,c_{i,k_{i}}\}$ such that 
\[
\sum_{j=0}^{k_{i}}c_{ij}z_{i}^{j}
\equiv 
0\ \mod\ \mathcal{I}_{F}.
\]
Thus there exists a holomorphic function $h_{i}(z_{1},\dots,z_{n})\in\mathcal{I}_{F}$ such that 
\[
\sum_{j=0}^{k_{i}}
c_{ij}z_{i}^{j}
-
h_{i}(z_{1},\dots,z_{n})
\equiv 0
\eqno
{\scriptstyle{(1\,\leqslant\, i\,\leqslant\,n)}}.
\]
If $(\alpha_{1},\dots,\alpha_{n})\in\mathcal{I}_{F}$, then for all $1\leqslant i\leqslant n$, one has $h_{i}(\alpha_{1},\dots,\alpha_{n})=0$. Hence
\[
\sum_{j=0}^{k_{i}}c_{ij}\alpha_{i}^{j}
=
0.
\]
The equation above is a polynomial equation in degree $k_{i}$, and so there are at most $k_{i}$ distinct solutions for $\alpha_{i}$. This holds for all $i$, and therefore $V(\mathcal{I}_{F})$ is a finite set. The proof is complete.

The implication (ii)$\implies$ (iv) is immediate. The converse will be proved after Skoda's theorem is introduced. The proof is reproduced from \cite[p~1179]{Siu-2010}
\end{proof}

\begin{Theorem}[Theorem of Henri Skoda]\label{LAG-thm-Theorem of Henri Koda}
Let $D$ be a pseudoconvex domain in $\mathbb{C}^{n}$ and let $\chi$ be a plurisubharmonic function on $D$. Let $g_{1}$,\dots,$g_{m}$ be holomorphic functions on $D$. Let $\alpha>1$ and $l=\min\{n,m-1\}$. Then for every holomorphic function $F$ on $D$ such that 
\[
\int_{D}|F|^{2}|g|^{-2\alpha l-2}
e^{-\chi}<\infty,
\]
there exist holomorphic functions $h_{1}$,\dots,$h_{m}$ on $D$ such that 
\[
F=
\sum_{i=1}^{m}h_{i}g_{i},
\]
and
\[
\int_{D}|h|^{2}|g|^{-2\alpha l-2}
e^{-\chi}
\leqslant
\frac{\alpha}{\alpha-1}
\int_{D}|F|^{2}|g|^{-2\alpha l-2}
e^{-\chi},
\]
where 
$
|g|=
\left(
\sum_{i=1}^{m}|g_{i}|^{2}\right)^{1/2}$
 and 
$|h|=
\left(
\sum_{i=1}^{m}|h_{i}|^{2}\right)^{1/2}.
$
\end{Theorem}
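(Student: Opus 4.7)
The plan is to recast the division problem $F=\sum_{i=1}^{m}h_{i}g_{i}$ as a weighted $\bar\partial$-equation on a Koszul-type subbundle of $\mathcal{O}_{D}^{\oplus m}$, then apply H\"ormander's $L^{2}$-existence theorem with a sharp weight. The first step is to form the tautological (non-holomorphic) candidate $u_{i}:=\overline{g_{i}}\,F/|g|^{2}$, which satisfies $\sum_{i}g_{i}u_{i}=F$ on $\{|g|>0\}$. Writing $h_{i}=u_{i}-k_{i}$, the requirement that each $h_{i}$ be holomorphic with $\sum_{i}g_{i}h_{i}=F$ becomes the coupled problem of finding $(k_{1},\dots,k_{m})$ with $\bar\partial k_{i}=\bar\partial u_{i}$ and the pointwise linear constraint $\sum_{i}g_{i}k_{i}=0$. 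This confines the solution to the subsheaf $K\subset \mathcal{O}_{D}^{\oplus m}$ of $m$-tuples annihilating the row $(g_{1},\dots,g_{m})$, whose fiberwise orthogonal complement is spanned by $(g_{1},\dots,g_{m})/|g|$.

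The next step is the curvature computation that drives everything. I would introduce the plurisubharmonic weight $\psi:=(\alpha l+1)\log|g|^{2}+\chi$ on $\{|g|>0\}$, equip $K$ with the restricted trivial metric, and compute the Nakano curvature of $K\otimes e^{-\psi}$. This curvature decomposes into two pieces: the diagonal contribution $i\partial\bar\partial\psi\otimes\mathrm{Id}_{K}$, which is semipositive since $\psi$ is plurisubharmonic; and a correction from the second fundamental form $\beta$ of $K\hookrightarrow \mathcal{O}_{D}^{\oplus m}$, whose Nakano sign is \emph{negative} but whose size is controlled pointwise by $i\partial\bar\partial\log|g|^{2}$. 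A trace/Cauchy--Schwarz inequality, combined with the rank bound $\mathrm{rk}(\beta\beta^{*})\leqslant l=\min(n,m-1)$, shows that the exponent $\alpha l+1$ is exactly what is needed for the diagonal term to dominate the second-fundamental-form term with a residual gain of the factor $(\alpha-1)/\alpha$. This is the heart of Skoda's original calculation.

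One then applies H\"ormander's $L^{2}$-existence theorem on the pseudoconvex domain $D$ to solve $\bar\partial k=\bar\partial u$ inside $K$ with weight $e^{-\psi}$, producing $k=(k_{1},\dots,k_{m})$ with $\sum_{i}g_{i}k_{i}\equiv 0$ and
\[
\int_{D}|k|^{2}|g|^{-2\alpha l-2}e^{-\chi}\;\leqslant\;\frac{\alpha}{\alpha-1}\int_{D}|\bar\partial u|^{2}_{\omega}|g|^{-2\alpha l-2}e^{-\chi},
\]
where the right-hand side reduces to $\frac{\alpha}{\alpha-1}\int_{D}|F|^{2}|g|^{-2\alpha l-2}e^{-\chi}$ after a direct computation of $\bar\partial u$. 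Setting $h_{i}:=u_{i}-k_{i}$ gives holomorphic $h_{i}$ with $\sum_{i}g_{i}h_{i}=F$, and expanding $|h|^{2}=|u|^{2}+|k|^{2}-2\,\mathrm{Re}\langle u,k\rangle$ together with the orthogonality $\sum g_{i}k_{i}=0$ eliminates the cross term and yields the stated bound.

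The main obstacle is the curvature/positivity step: verifying that the Nakano curvature of $K\otimes e^{-\psi}$ strictly dominates the second fundamental form with the precise gain $(\alpha-1)/\alpha$, and recognizing that the rank bound forces the exponent $l=\min(n,m-1)$. This is where the hypothesis $\alpha>1$ is essential and where the sharpness of Skoda's weight $|g|^{-2\alpha l-2}$ is pinned down; any weaker exponent leaves uncompensated negative terms and the argument collapses.
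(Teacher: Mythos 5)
The paper does not actually prove this theorem: Skoda's $L^{2}$ division theorem is quoted as a black box and only \emph{applied} afterwards, so there is no in-paper proof to compare against. Your outline is the standard proof strategy going back to Skoda (in its modern Nakano-curvature reformulation \`a la Demailly): the tautological minimal solution $u_{i}=\overline{g_{i}}F/|g|^{2}$, correction by a term constrained to the kernel subbundle $K=\{\sum_{i}g_{i}k_{i}=0\}$, the weight $\psi=(\alpha l+1)\log|g|^{2}+\chi$, and the identification of $l=\min(n,m-1)$ through the rank of the second fundamental form. As a plan it is the right one.

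Two places remain gaps rather than proofs. First, essentially all of the analytic content lives in the step you yourself label the heart of the matter: the basic inequality bounding the second-fundamental-form term by $\alpha\, i\partial\bar{\partial}\log|g|^{2}$ with residual gain $(\alpha-1)/\alpha$ is asserted, not carried out, and moreover $u$ is only defined on $\{|g|>0\}$, so one must regularize (replace $|g|^{2}$ by $|g|^{2}+\varepsilon^{2}$), work on a relatively compact pseudoconvex exhaustion of $D$, and pass to the limit; this is exactly where $\alpha>1$ and the exponent are used, and none of it is routine. Second, your closing bookkeeping does not match the inequality as stated: with $\sum_{i}g_{i}k_{i}=0$ the cross term indeed vanishes and $|h|^{2}=|u|^{2}+|k|^{2}$, but $|u|^{2}=|F|^{2}/|g|^{2}$, so the $|u|^{2}$ contribution to $\int_{D}|h|^{2}|g|^{-2\alpha l-2}e^{-\chi}$ is $\int_{D}|F|^{2}|g|^{-2\alpha l-4}e^{-\chi}$, which is not controlled by the stated right-hand side. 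In Skoda's theorem the left-hand weight is $|g|^{-2\alpha l}$, one factor of $|g|^{-2}$ less singular than the right-hand weight, and that is precisely what the $|u|^{2}$ term produces; the paper's statement appears to have transcribed the same weight onto both sides. So either prove the correctly weighted version, or explain how the extra factor of $|g|^{-2}$ is to be absorbed --- as written, your final expansion does not close against the displayed estimate.
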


\begin{proof}[Finishing the proof of Theorem] For any non-negative numbers $\gamma_{1}$,\dots,$\gamma_{n}$ with $\gamma_{1}+\cdots+\gamma_{n}=(n+2)p$, Skoda's theorem is applied with the following variables: $F=z_{1}^{\gamma_{1}}\cdots z_{n}^{\gamma_{n}}$, $m=\NN+n$, $\chi\equiv 0$, $(F_{1},\dots, F_{\NN},0,\dots, 0)=(g_{1},\dots,g_{m})$, $l=n$ and $\alpha=\frac{n+1}{n}$. By the hypothesis in (iv), 
\[
|z_{1}^{\gamma_{1}}\cdots z_{n}^{\gamma_{n}}|^{2}
\lesssim
|z|^{2(n+2)p}
\lesssim
\left(
\sum_{i=1}^{\NN}|F_{i}|
\right)^{2(n+2)}
\lesssim
\left(
\sum_{i=1}^{\NN}|F_{i}|^{2}
\right)^{(n+2)}
\]
where the last inequality follows from Jensen's inequality. Hence over a small pseudoconvex domain $D$,
\begin{eqnarray*}
\int_{D}
\frac{|z_{1}^{\gamma_{1}}\cdots z_{n}^{\gamma_{n}}|^{2}}{\left(\sum_{j=1}^{2}|F_{j}|^{2}\right)^{n+2}}
\lesssim  
\int_{D}
\frac{\left(\sum_{i=1}^{\NN}|F_{i}|^{2}
\right)^{(n+2)}}{\left(\sum_{i=1}^{\NN}|F_{i}|^{2}
\right)^{(n+2)}}
=
\int_{D}1 <\infty.
\end{eqnarray*}
Skoda's theorem applies and therefore $z_{1}^{\gamma_{1}}\cdots z_{n}^{\gamma_{n}}\in\mathcal{I}_{F}$ . Consequently, $\mathfrak{m}^{(n+2)p}\subseteq \mathcal{I}_{F}$.
\end{proof}

From the proof above, we obtain the following corollary.

\begin{Corollary}\label{LAG-cor-of Henri Skoda}
Let $F_{1}$,\dots, $F_{\NN}$ be holomorphic function germs in $\mathcal{O}_{\mathbb{C}^{n},0}$ at the origin, and suppose there exists $p\in\mathbb{N}_{\geqslant 1}$ such that 
\[
|z|^{p}\lesssim
\sum_{i=1}^{\NN}|F_{i}|
\]
in a small neighbourhood $0$, then $\mathfrak{m}^{(n+2)p}\subseteq \mathcal{I}_{F}$.
\end{Corollary}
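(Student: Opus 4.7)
The corollary is essentially an extraction of the implication (iv) $\Rightarrow$ (ii) in Theorem \ref{LAG-thm-characterisations of complete intersection 1}, whose argument via Skoda's theorem has just been written out; the only new content is tracking an explicit exponent. The plan is therefore to show that every monomial $z^{\gamma} = z_1^{\gamma_1}\cdots z_n^{\gamma_n}$ with $|\gamma| = \gamma_1 + \cdots + \gamma_n = (n+2)p$ lies in $\mathcal{I}_F$. By property (iii) of Paragraph \ref{LAG-para-4}, such monomials generate $\mathfrak{m}^{(n+2)p}$, which then yields the desired inclusion.

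To handle a fixed such monomial, I would pick a small pseudoconvex neighbourhood $D$ of $0$ and apply Theorem \ref{LAG-thm-Theorem of Henri Koda} with the choices already used in the proof of the Theorem: $F(z) = z^{\gamma}$, $\chi \equiv 0$, $(g_1, \dots, g_m) = (F_1, \dots, F_{\NN}, 0, \dots, 0)$ with $m = \NN + n$ so that $l = \min\{n, m-1\} = n$, and $\alpha = \frac{n+1}{n} > 1$. With these parameters, the exponent in Skoda's integrability hypothesis becomes $2\alpha l + 2 = 2(n+1) + 2 = 2(n+2)$, which matches the exponent $(n+2)p$ appearing in the statement.

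The key computation is to verify
\[
\int_D \frac{|z^{\gamma}|^2}{\left(\sum_{i=1}^{\NN} |F_i|^2\right)^{n+2}} < \infty,
\]
and this is where the hypothesis $|z|^p \lesssim \sum_i |F_i|$ is used: raising to the power $2(n+2)$ and applying Jensen's inequality (equivalently Cauchy--Schwarz, $\sum_i |F_i| \lesssim (\sum_i |F_i|^2)^{1/2}$) gives
\[
|z^{\gamma}|^2 \leqslant |z|^{2(n+2)p} \lesssim \Bigl(\sum_{i=1}^{\NN}|F_i|\Bigr)^{2(n+2)} \lesssim \Bigl(\sum_{i=1}^{\NN}|F_i|^2\Bigr)^{n+2},
\]
so the integrand is bounded and $D$ is bounded. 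Skoda then produces holomorphic $h_1,\dots, h_m$ on $D$ with $z^{\gamma} = \sum_{i=1}^{\NN} h_i F_i$, showing $z^{\gamma} \in \mathcal{I}_F$.

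There is no real obstacle here; the only thing to pay attention to is the exponent bookkeeping (choosing $\alpha$ and $l$ so that $2\alpha l + 2 = 2(n+2)$), and the padding trick $m = \NN + n$ which is needed to force $l = n$ even when $\NN$ is small. Everything else is a direct re-use of the argument already presented for (iv) $\Rightarrow$ (ii) in the Theorem.
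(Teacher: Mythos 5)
Your proposal is correct and follows essentially the same route as the paper: the paper's proof of this corollary is exactly the ``Finishing the proof of Theorem'' computation (Skoda with $F=z_1^{\gamma_1}\cdots z_n^{\gamma_n}$, $m=\NN+n$, $l=n$, $\alpha=\tfrac{n+1}{n}$, $\chi\equiv 0$, plus Jensen's inequality), applied to every monomial of degree $(n+2)p$. Your exponent bookkeeping $2\alpha l+2=2(n+2)$ and the padding trick to force $l=n$ match the paper's argument exactly.
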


\subsubsection{The intersection invariants $(p,q,s)$.}\label{LAG-para-2-2}

\begin{Definition}
Let $F_{1}$,\dots, $F_{\NN}$ be holomorphic function germs in $\mathcal{O}_{\mathbb{C}^{n},0}$ at the origin. The ideal $\mathcal{I}_{F}=\langle F_{1},\dots, F_{\NN}\rangle$ is said to have finite intersection multiplicity with data $(p,q,s)$ if

\smallskip\noindent{\bf (i)}  
$p$ is the smallest strictly postive integer satsifying
\[
|z|^{p}
\lesssim
\sum_{i=1}^{\NN}|F_{i}|;
\]

\smallskip\noindent{\bf (ii)}
$q$ is the smallest strictly positive integer satisfying 
\[
\mathfrak{m}^{q}\subseteq
\mathcal{I}_{F}; 
\] 
\smallskip\noindent{\bf (iii)} 
$s$ is following number below
\[
\dim_{\mathbb{C}}\ 
\mathcal{O}_{\mathbb{C}^{n},0}
\big/
\mathcal{I}_{F}=:s.
\]
\end{Definition}

\subsubsection{The relations between the intersection invariants.} \label{LAG-para-2-3}
\begin{Proposition}\label{LAG-prop-Relations between the intersection invariants}
Let $F_{1}$,\dots, $F_{\NN}$ be holomorphic function germs in $\mathcal{O}_{\mathbb{C}^{n},0}$ at the origin so that the ideal they generate $\mathcal{I}_{F}$ has finite intersection multiplicity with data $(p,q,s)$. Then we have the following inequalities:

\smallskip\noindent{\bf (i)}
$q\leqslant s\leqslant \binom{n+q-1}{q-1}$,

\smallskip\noindent{\bf (ii)}
$p\leqslant q\leqslant (n+2)p$.  
\end{Proposition}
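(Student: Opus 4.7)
The plan is to prove the four inequalities separately, using the dictionary from Paragraph \ref{LAG-para-4} between ideal inclusions $\mathfrak{m}^{l}\subseteq\mathcal{I}_{F}$ and monomial containment, together with Nakayama's Lemma on the local ring $\mathcal{O}_{\mathbb{C}^{n},0}/\mathcal{I}_{F}$, and the corollary of Skoda's theorem that has just been established.

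For the bound $q\leqslant s$ in $\mathrm{(i)}$, I would introduce the descending chain of $\mathbb{C}$-vector subspaces of $R:=\mathcal{O}_{\mathbb{C}^{n},0}/\mathcal{I}_{F}$ defined by $V_{k}:=(\mathfrak{m}^{k}+\mathcal{I}_{F})/\mathcal{I}_{F}$. Then $V_{0}=R$ has dimension $s$, and $V_{q}=0$ by the definition of $q$. The key claim is that the chain $V_{0}\supseteq V_{1}\supseteq\cdots\supseteq V_{q}$ is strictly decreasing: if $V_{k}=V_{k+1}$, then $\mathfrak{m}^{k}\subseteq\mathfrak{m}^{k+1}+\mathcal{I}_{F}$, which means that the maximal ideal $\overline{\mathfrak{m}}$ of $R$ satisfies $\overline{\mathfrak{m}}\cdot V_{k}=V_{k}$, so Nakayama's Lemma applied to the finite-dimensional (hence finitely generated) $R$-module $V_{k}$ forces $V_{k}=0$, contradicting the minimality of $q$. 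Strict drops of dimension along $q+1$ terms then yield $s\geqslant q$.

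For the upper bound $s\leqslant\binom{n+q-1}{q-1}$, the inclusion $\mathfrak{m}^{q}\subseteq\mathcal{I}_{F}$ gives a surjection of $\mathbb{C}$-vector spaces $\mathcal{O}_{\mathbb{C}^{n},0}/\mathfrak{m}^{q}\twoheadrightarrow \mathcal{O}_{\mathbb{C}^{n},0}/\mathcal{I}_{F}$, and by Paragraph \ref{LAG-para-4}\textbf{(iii)} the left-hand side has as basis the monomials $z_{1}^{k_{1}}\cdots z_{n}^{k_{n}}$ with $k_{1}+\cdots+k_{n}\leqslant q-1$, whose number is $\binom{n+q-1}{n}=\binom{n+q-1}{q-1}$ by the hockey-stick identity.

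For $\mathrm{(ii)}$, the inequality $q\leqslant (n+2)p$ is an immediate application of Corollary~\ref{LAG-cor-of Henri Skoda}: the hypothesis on $p$ gives $|z|^{p}\lesssim\sum_{i}|F_{i}|$, whence $\mathfrak{m}^{(n+2)p}\subseteq\mathcal{I}_{F}$, forcing $q\leqslant (n+2)p$ by the minimality of $q$. For the remaining inequality $p\leqslant q$, the condition $\mathfrak{m}^{q}\subseteq\mathcal{I}_{F}$ means in particular that each pure power $z_{i}^{q}$ lies in $\mathcal{I}_{F}$, so on a small polydisc $z_{i}^{q}=\sum_{k}g_{ik}F_{k}$ with $g_{ik}$ bounded, giving $|z_{i}|^{q}\lesssim\sum_{k}|F_{k}|$; summing over $i$ and using $|z|^{q}\leqslant n^{q/2}\max_{i}|z_{i}|^{q}$ yields $|z|^{q}\lesssim\sum_{k}|F_{k}|$, so the minimality of $p$ gives $p\leqslant q$. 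No step is genuinely hard; the only point that requires care is the Nakayama argument ensuring strictness of the descending chain in the first part.
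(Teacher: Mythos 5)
Your proof is correct and follows essentially the same route as the paper: the same Nakayama argument on the chain of powers of the maximal ideal in $\mathcal{O}_{\mathbb{C}^{n},0}/\mathcal{I}_{F}$ (you phrase it as strict descent down to index $q$, the paper as stabilization at some $k\leqslant s$, which is the same pigeonhole), the same surjection from $\mathcal{O}_{\mathbb{C}^{n},0}/\mathfrak{m}^{q}$ for the binomial bound, and the same appeals to the definition of $p$ and to Corollary~\ref{LAG-cor-of Henri Skoda} for part (ii). You merely spell out a few steps the paper leaves implicit, such as the monomial count and the passage from $z_{i}^{q}\in\mathcal{I}_{F}$ to $|z|^{q}\lesssim\sum_{i}|F_{i}|$.
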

\begin{proof}
To prove $q\leqslant s$, it is first observed that $\mathcal{O}_{\mathbb{C}^{n},0}
\big/\mathcal{I}_{F}$ is also a local ring with the maximal ideal $\mathfrak{m}\big/\mathcal{I}_{F}$. In the chain of inclusion of vector spaces with 
\[
\frac{\mathcal{O}_{\mathbb{C}^{n},0}}{\mathcal{I}_{F}}
\supseteq 
\frac{\mathfrak{m}}{\mathcal{I}_{F}}
\supseteq
\left(
\frac{\mathfrak{m}}{\mathcal{I}_{F}}
\right)^{2}
\supseteq \cdots\
\supseteq
\left(
\frac{\mathfrak{m}}{\mathcal{I}_{F}}
\right)^{s+1},
\]
since $\mathcal{O}_{\mathbb{C}^{n},0}\big/
\mathcal{I}_{F}$ is an $s$-dimensional complex vector space,  there exists a positive integer $1\leqslant k\leqslant s$ such that 
\[
\left(
\frac{\mathfrak{m}}{\mathcal{I}_{F}}
\right)^{k}
=
\left(
\frac{\mathfrak{m}}{\mathcal{I}_{F}}
\right)^{k+1}
=
\left(
\frac{\mathfrak{m}}{\mathcal{I}_{F}}
\right)
\left(
\frac{\mathfrak{m}}{\mathcal{I}_{F}}
\right)^{k}.
\]
By Nakayama's lemma\footnote{The following version of Nakayama's lemma is used: let $A$ be a commutative local ring with $1$, and $\mathfrak{m}$ its maximal ideal. For any finitely generated $A$-module $M$, if $\mathfrak{m}M=M$, then $M=0$}, 
\[
\left(
\frac{\mathfrak{m}}{\mathcal{I}_{F}}
\right)^{k}
\equiv 0
\qquad
\text{ in }
\frac{\mathcal{O}_{\mathbb{C}^{n},0}}{\mathcal{I}_{F}}.
\]
Therefore, if $g_{1}$,\dots,$g_{k}$ are elements of $\mathfrak{m}$ in $\mathcal{O}_{\mathbb{C}^{n},0}$, then the class $g_{1}\cdots g_{k}$ belongs to $(\mathfrak{m}\big/\mathcal{I}_{F})^{k}$
which is the zero vector space. Hence the holomorphic function  $g_{1}\cdots g_{k}$ lies in $\mathcal{I}_{F}$. Since the  $g_{1}\cdots g_{k}$ generate $\mathfrak{m}^{k}$, the ideal $\mathfrak{m}^{k}$ is contained in $ \mathcal{I}_{F}$. By the definition of $q$, the inequality $q\leqslant k\leqslant s$ holds.

Next, for $s\leqslant \binom{n+q-1}{q-1}$, this follows directly from $\mathfrak{m}^{q}\subseteq\mathcal{I}_{F}$.

In the second set of inequalities, to prove $p\leqslant q$, observe that since $\mathfrak{m}^{q}\subseteq \mathcal{I}_{F}$,
\[
|z|^{q}\lesssim \sum_{i=1}^{\NN}|F_{i}|.
\]
Hence, by the definition of $p$,  $p\leqslant q$. 

To prove $q\leqslant (n+2)p$, it follows from Corollary \ref{LAG-cor-of Henri Skoda}.
\end{proof}

\subsubsection{Application of the relations of the invariants.}\label{LAG-para-2-4}

\begin{Lemma}\label{LAG-lem-if h(0)=0 then h^s is in the ideal}
Let $F_{1}$, \dots, $F_{\NN}$ be holomorphic function germs such that the intersection multiplicity of $\mathcal{I}_{F}=\langle F_{1},\dots, F_{\NN}\rangle$ is finite with data $(p,q,s)$. If $h\in\mathcal{O}_{\mathbb{C}^{n},0}$ is a holomorphic function germ with $h(0)=0$, then $h^{s}\in\mathcal{I}_{F}$.
\end{Lemma}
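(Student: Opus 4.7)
The plan is to reduce the statement directly to the inequality $q \leqslant s$ established in Proposition \ref{LAG-prop-Relations between the intersection invariants}. Since $h(0) = 0$, by the characterisation of $\mathfrak{m}$ in Paragraph \ref{LAG-para-3}, we have $h \in \mathfrak{m}$. Raising to the $s$-th power gives $h^{s} \in \mathfrak{m}^{s}$, so it suffices to show the containment $\mathfrak{m}^{s} \subseteq \mathcal{I}_{F}$.

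This is exactly where the relation between the invariants enters. By Proposition \ref{LAG-prop-Relations between the intersection invariants}(i), $q \leqslant s$, and by the defining property of $q$ in the definition of finite intersection multiplicity data $(p,q,s)$, $\mathfrak{m}^{q} \subseteq \mathcal{I}_{F}$. Since $s \geqslant q$, one has
\[
\mathfrak{m}^{s} = \mathfrak{m}^{s-q}\cdot \mathfrak{m}^{q} \subseteq \mathfrak{m}^{q} \subseteq \mathcal{I}_{F}.
\]
Combining with $h^{s} \in \mathfrak{m}^{s}$, we conclude $h^{s} \in \mathcal{I}_{F}$.

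There is essentially no obstacle: the whole content of the lemma is packaged into the inequality $q \leqslant s$, which was itself a Nakayama-style argument applied to the descending chain $(\mathfrak{m}/\mathcal{I}_{F})^{k}$ in the $s$-dimensional vector space $\mathcal{O}_{\mathbb{C}^{n},0}/\mathcal{I}_{F}$. One could also give a self-contained argument by considering the powers $\bar{h}, \bar{h}^{2}, \ldots$ in the quotient ring and invoking Nakayama directly, but since the key estimate has already been proved in the preceding subsection, the cleanest proof just cites it.
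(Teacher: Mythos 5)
Your proof is correct and is essentially identical to the paper's own argument: both reduce the claim to $h^{s}\in\mathfrak{m}^{s}\subseteq\mathfrak{m}^{q}\subseteq\mathcal{I}_{F}$ via the inequality $q\leqslant s$ from Proposition \ref{LAG-prop-Relations between the intersection invariants}. Nothing further is needed.
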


\begin{proof}
Since $h(0)=0$, the function $h$ lies in $\mathfrak{m}$. Consequently, $h^{s}\in\mathfrak{m}^{s}$. By $q\leqslant s$, there is an inclusion of ideals $\mathfrak{m}^{s}\subseteq \mathfrak{m}^{q}$. Therefore, $h^{s}\in\mathfrak{m}^{s}\subseteq
\mathfrak{m}^{q}\subseteq\mathcal{I}_{F}$.
\end{proof}

\subsection{Local Intersection Theory II}

\subsubsection{}\label{LAG-para-3-1} The case where $\NN=n=\dim\ \mathbb{C}^{n}$ brings another set of equivalent conditions for complete local intersection of $n$ holomorphic function germs $F_{1}$, \dots, $F_{\NN=n}$.

\subsubsection{}\label{LAG-para-3-2}
\begin{Theorem}\label{LAG-thm-characterisations of complete intersections 2}
Let $F_{1}$, \dots, $F_{n}$ be holomorphic function germs in $\mathcal{O}_{\mathbb{C}^{n},0}$ such that $F_{i}(0)=0$ for all $1\leqslant i\leqslant n$. The following are equivalent:

\smallskip\noindent{\bf (i)} 
$\dim_{\mathbb{C}}\ 
\mathcal{O}_{\mathbb{C}^{n},0}
\big/
\langle F_{1},\dots,F_{n}\rangle
=: s<\infty;$

\smallskip\noindent{\bf (ii)} the holomorphic map of germs of analytic spaces 
\begin{eqnarray*}
F:(\mathbb{C}^{n},0) &\longrightarrow & 
(\mathbb{C}^{n},0)\\
(z_{1},\dots, z_{n})
&\longmapsto & 
(F_{1},\dots,F_{n})
\end{eqnarray*}
defines a ramified $s$-sheeted analytic covering;

\smallskip\noindent{\bf (iii)}
for each $1\leqslant i\leqslant n$, let $\varepsilon_{i}$ be a small strictly positive number, and $\Gamma$ be given by
\[
\Gamma=
\{z:\ |F_{i}|=\varepsilon_{i}
\}.
\] 
Then the residue map of $F$ at the origin equals to $s$:
\[
\Res_{0}\ F
=
\int_{\Gamma}
\frac{dF_{1}\wedge\cdots \wedge dF_{n}}{F_{1}\cdots F_{n}}=s.
\]
\end{Theorem}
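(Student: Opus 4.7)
The plan is to establish the cycle of implications (i)$\Longrightarrow$(ii)$\Longrightarrow$(iii)$\Longrightarrow$(i), leveraging Theorem~\ref{LAG-thm-characterisations of complete intersection 1} together with the standard theory of finite holomorphic maps and the residue change-of-variables formula.

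For (i)$\Longrightarrow$(ii): by Theorem~\ref{LAG-thm-characterisations of complete intersection 1}, condition (i) implies $V(F_{1},\ldots,F_{n})=\{0\}$, so the origin is an isolated fibre of the germ $F$. After shrinking we may pick polydiscs $\Delta_{z}\subset\mathbb{C}^{n}$ and $\Delta_{w}\subset\mathbb{C}^{n}$ so that $F\colon F^{-1}(\Delta_{w})\cap\Delta_{z}\to\Delta_{w}$ is proper (hence finite, by Remmert's proper mapping theorem). Off the critical set $C=\{\det(\partial F_{i}/\partial z_{j})=0\}$ the implicit function theorem shows $F$ is a local biholomorphism, so $F$ is a ramified covering over $\Delta_{w}$ with discriminant $F(C)$. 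To identify the sheet number with $s$, I would apply the Weierstrass preparation theorem inductively to show that $\mathcal{O}_{\mathbb{C}^{n},0}$ is a free module of rank $s$ over $\mathbb{C}\{F_{1},\ldots,F_{n}\}$ via $F^{\ast}$; equivalently, one may invoke miracle flatness for a finite map between regular spaces of the same dimension to deduce flatness, whereupon the generic fibre has $s$ points.

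For (ii)$\Longrightarrow$(iii): with $\varepsilon_{i}>0$ chosen small enough that the torus $\Gamma=\{|F_{i}|=\varepsilon_{i}\}$ is disjoint from $C$, the restriction $F\colon\Gamma\to T:=\{|w_{i}|=\varepsilon_{i}\}$ is an unramified $s$-sheeted cover. Since
\[
F^{\ast}\!\left(\frac{dw_{1}\wedge\cdots\wedge dw_{n}}{w_{1}\cdots w_{n}}\right)=\frac{dF_{1}\wedge\cdots\wedge dF_{n}}{F_{1}\cdots F_{n}},
\]
the change-of-variables formula gives
\[
\int_{\Gamma}\frac{dF_{1}\wedge\cdots\wedge dF_{n}}{F_{1}\cdots F_{n}}=s\int_{T}\frac{dw_{1}\wedge\cdots\wedge dw_{n}}{w_{1}\cdots w_{n}}=s,
\]
with the residue normalisation implicit in the statement. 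For (iii)$\Longrightarrow$(i), the mere well-definedness of the compact cycle $\Gamma$ together with the integral producing a finite positive number already forces $F^{-1}(0)\cap\Delta_{z}=\{0\}$ locally (otherwise a positive-dimensional component of the zero locus would make $\Gamma$ non-compact in $\Delta_{z}$ for small $\varepsilon_{i}$); Theorem~\ref{LAG-thm-characterisations of complete intersection 1} then yields (i).

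The main obstacle will be the sheet-count step in (i)$\Longrightarrow$(ii). Identifying the topological degree of the ramified cover with the algebraic invariant $s=\dim_{\mathbb{C}}\mathcal{O}_{\mathbb{C}^{n},0}/\langle F_{1},\ldots,F_{n}\rangle$ is the analytic heart of the statement; it is here that one must either invoke flatness of $F$ or build the free module structure by hand through iterated Weierstrass preparation. Once this is in place, the residue computation is mechanical and the reverse implication is immediate.
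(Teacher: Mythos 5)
The paper gives no proof of this theorem at all: its ``proof'' is a citation to D'Angelo, Griffiths--Harris and Chirka. Your forward chain (i)$\Rightarrow$(ii)$\Rightarrow$(iii) is precisely the argument found in those references: isolatedness of the fibre via Theorem~\ref{LAG-thm-characterisations of complete intersection 1}, properness on a suitable polydisc and Remmert to get a finite branched cover, the sheet count via freeness of $\mathcal{O}_{\mathbb{C}^{n},0}$ as a module over $\mathbb{C}\{w_{1},\dots,w_{n}\}$ (miracle flatness or iterated Weierstrass preparation), and the change-of-variables computation of the residue. You are also right to flag that the normalising factor $(2\pi\sqrt{-1})^{-n}$ is implicit in the paper's formula. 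Up to the usual care needed when the torus $T$ meets the discriminant (perturb the $\varepsilon_{i}$ or argue homologically), this part is sound.

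The step (iii)$\Rightarrow$(i), however, is not correct as you argue it. Compactness of $\Gamma$ together with finiteness and positivity of the integral does \emph{not} force $F^{-1}(0)$ to be the origin. Take $n=2$, $F_{1}=z_{1}$, $F_{2}=z_{1}z_{2}$ on the unit polydisc: here $V(F_{1},F_{2})=\{z_{1}=0\}$ is a curve, so $s=\infty$, yet for $\varepsilon_{1}=\tfrac{1}{2}$, $\varepsilon_{2}=\tfrac{1}{8}$ the set $\Gamma=\{|z_{1}|=\tfrac{1}{2},\ |z_{1}z_{2}|=\tfrac{1}{8}\}=\{|z_{1}|=\tfrac{1}{2},\ |z_{2}|=\tfrac{1}{4}\}$ is a compact torus avoiding $\{F_{1}F_{2}=0\}$, and
\[
\int_{\Gamma}\frac{dF_{1}\wedge dF_{2}}{F_{1}F_{2}}
=
\int_{\Gamma}\frac{dz_{1}\wedge dz_{2}}{z_{1}z_{2}}
=
\big(2\pi\sqrt{-1}\big)^{2}\neq 0.
\]
So your parenthetical claim that a positive-dimensional component of the zero locus would make $\Gamma$ non-compact is false. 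To close the cycle of implications you must either read (iii) as presupposing the finiteness of $s$ (in which case (iii)$\Rightarrow$(i) is vacuous and the real content is (i)$\Leftrightarrow$(ii) together with the residue formula), or interpret (iii) as requiring the integral to take the same finite value for all sufficiently small \emph{independent} choices of $\varepsilon_{1},\dots,\varepsilon_{n}$ and show that a positive-dimensional fibre makes $\Gamma$ empty (hence the integral zero) for some admissible choices --- a genuinely different argument from the one you give.
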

\begin{proof}
See \cite[p~60]{DAngelo-Book-1993},  \cite[p~666-667]{Griffiths-Harris-1994}, \cite[p~140, Proposition~1]{Chirka-1989} for discussion.
\end{proof}

\subsubsection{}\label{LAG-para-3-3} We will show that given Theorem \ref{LAG-thm-characterisations of complete intersections 2}, one has $\mult_{0}\ \Jac(F)\leqslant s-1$.

\begin{Theorem}\label{LAG-thm-Ideal membership by Residues}
Let $h$ be a holomorphic function germ. If $h\in\mathcal{I}_{F}$, then 
\[
\int_{\Gamma}\frac{h\ dz_{1}\wedge\cdots\wedge dz_{n}}{F_{1}\cdots F_{n}}=0.
\]
\end{Theorem}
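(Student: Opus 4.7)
The plan is to exploit the ideal structure directly. Since $h\in\mathcal{I}_{F}=\langle F_{1},\dots,F_{n}\rangle$, I would first write
\[
h=\sum_{k=1}^{n}a_{k}\,F_{k}
\]
for some holomorphic germs $a_{k}\in\mathcal{O}_{\mathbb{C}^{n},0}$, and by $\mathbb{C}$-linearity of the integral reduce the claim to showing, for each fixed $k$, that
\[
\int_{\Gamma}\frac{a_{k}\,F_{k}\,dz_{1}\wedge\cdots\wedge dz_{n}}{F_{1}\cdots F_{n}}
\;=\;
\int_{\Gamma}\frac{a_{k}\,dz_{1}\wedge\cdots\wedge dz_{n}}{F_{1}\cdots\widehat{F_{k}}\cdots F_{n}}
\;=\;0.
\]
The decisive observation is that, after the cancellation of $F_{k}$ between numerator and denominator, the integrand
$\omega_{k}:=\frac{a_{k}\,dz_{1}\wedge\cdots\wedge dz_{n}}{F_{1}\cdots\widehat{F_{k}}\cdots F_{n}}$
is a holomorphic $(n,0)$-form on $U\setminus\bigcup_{j\neq k}\{F_{j}=0\}$ (with $U$ a small neighbourhood of $0$); it has no pole along the hypersurface $\{F_{k}=0\}$ anymore.

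The next step is to apply Stokes' theorem with the real $(n+1)$-chain
\[
C_{k}\;:=\;\bigl\{z\in U:\ |F_{k}(z)|\leqslant\varepsilon_{k},\ \ |F_{j}(z)|=\varepsilon_{j}\text{ for all }j\neq k\bigr\}.
\]
Because $F:(\mathbb{C}^{n},0)\to(\mathbb{C}^{n},0)$ is a finite ramified $s$-sheeted cover by Theorem~\ref{LAG-thm-characterisations of complete intersections 2}, the set $C_{k}$ is relatively compact in $U$; by Sard's theorem applied to the $|F_{j}|^{2}$ for $j\neq k$, one may choose $(\varepsilon_{1},\dots,\varepsilon_{n})$ generic so that $C_{k}$ is a smooth real $(n+1)$-manifold with boundary $\partial C_{k}=\Gamma$. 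By construction $C_{k}$ avoids $\bigcup_{j\neq k}\{F_{j}=0\}$, so $\omega_{k}$ is smooth (in fact holomorphic) throughout $C_{k}$, and $d\omega_{k}\equiv 0$ on $C_{k}$: indeed $\bar\partial\omega_{k}=0$ by holomorphicity, while $\partial\omega_{k}$ vanishes for dimensional reasons since there are no nonzero $(n+1,0)$-forms on $\mathbb{C}^{n}$.

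Stokes' theorem then delivers the conclusion
\[
\int_{\Gamma}\omega_{k}\;=\;\int_{\partial C_{k}}\omega_{k}\;=\;\int_{C_{k}}d\omega_{k}\;=\;0,
\]
as desired. The main technical point is justifying that $C_{k}$ is a bona fide smooth $(n+1)$-chain whose full boundary is exactly $\Gamma$: this requires the finiteness of $F$ near $0$ (to force $C_{k}$ to be compact) together with a simultaneous choice of generic $\varepsilon_{j}$ via Sard's theorem (to ensure that the level sets $\{|F_{j}|=\varepsilon_{j}\}$ are smooth and meet transversally). Once this set-up is verified, the argument is simply the multivariable analogue of the one-variable principle that an integrand $h/F$ with $h$ divisible by $F$ loses its pole, so the residue cycle can be filled in by a disk.
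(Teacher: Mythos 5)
Your proof is correct, and it is essentially the standard argument: the paper itself gives no proof of this theorem, deferring entirely to the cited reference (D'Angelo's book), where the result is established exactly by your route — cancel $F_{k}$ against the denominator, note the resulting form is closed and holomorphic off $\bigcup_{j\neq k}\{F_{j}=0\}$, and fill in $\Gamma$ with the chain $\{|F_{k}|\leqslant\varepsilon_{k},\ |F_{j}|=\varepsilon_{j}\ (j\neq k)\}$ inside the compact set $\{|F_{j}|\leqslant\varepsilon_{j}\ \forall j\}$ guaranteed by the finiteness of $F$. Your attention to the compactness of $C_{k}$ and the genericity of the $\varepsilon_{j}$ covers the only technical points that need care.
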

\begin{proof}
See \cite[p~64]{DAngelo-Book-1993}.
\end{proof}

\begin{Corollary}\label{LAG-cor-Jacobian does not belong to original ideal}
Let $F_{1}$, \dots, $F_{n}$ be holomorphic function germs in $\mathcal{O}_{\mathbb{C}^{n},0}$ vanishing at the origin, whose varieties they define have complete intersection at the origin. Let $F$ be the map in Theorem \ref{LAG-thm-characterisations of complete intersections 2}(ii). Then $\Jac(F)\notin\langle F_{1},\dots, F_{\NN}\rangle$.
\end{Corollary}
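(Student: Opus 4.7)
The plan is to derive a contradiction by combining the two theorems that immediately precede the corollary, exploiting the elementary identity
\[
dF_1 \wedge \cdots \wedge dF_n = \Jac(F)\, dz_1 \wedge \cdots \wedge dz_n,
\]
which follows from the multilinearity of the wedge product and the definition of the Jacobian.

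First I would rewrite the residue integral appearing in Theorem \ref{LAG-thm-characterisations of complete intersections 2}(iii) using the identity above:
\[
\Res_0 F
=
\int_{\Gamma}
\frac{dF_1 \wedge \cdots \wedge dF_n}{F_1 \cdots F_n}
=
\int_{\Gamma}
\frac{\Jac(F)\, dz_1 \wedge \cdots \wedge dz_n}{F_1 \cdots F_n}
=
s.
\]
Since the varieties defined by $F_1, \ldots, F_n$ form a complete intersection at the origin and each $F_i$ vanishes there, the ideal $\langle F_1,\dots,F_n\rangle$ is a proper ideal of $\mathcal{O}_{\mathbb{C}^n,0}$, so $s \geqslant 1$; in particular the residue is nonzero.

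Next, I would argue by contradiction. Assume that $\Jac(F) \in \langle F_1, \ldots, F_n \rangle$. Then by Theorem \ref{LAG-thm-Ideal membership by Residues} applied to $h := \Jac(F)$, we would have
\[
\int_{\Gamma}
\frac{\Jac(F)\, dz_1 \wedge \cdots \wedge dz_n}{F_1 \cdots F_n}
=
0,
\]
contradicting the previous computation that identifies the same integral with $s \geqslant 1$. Hence $\Jac(F) \notin \langle F_1, \ldots, F_n \rangle$.

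There is essentially no obstacle here; the two preceding theorems are tailor-made for this corollary, and the only substantive step is recognizing the change-of-variables identity that converts $dF_1 \wedge \cdots \wedge dF_n$ into $\Jac(F)\, dz_1 \wedge \cdots \wedge dz_n$. The only subtlety worth double-checking is that $s \geqslant 1$, which is automatic since the ideal is proper (all $F_i$ vanish at $0$, so the quotient $\mathcal{O}_{\mathbb{C}^n,0}/\langle F_1,\dots,F_n\rangle$ contains at least the class of the constant $1$).
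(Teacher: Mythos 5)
Your proof is correct and is essentially identical to the paper's: both rewrite the residue integral via $dF_1\wedge\cdots\wedge dF_n=\Jac(F)\,dz_1\wedge\cdots\wedge dz_n$, invoke Theorem \ref{LAG-thm-characterisations of complete intersections 2}(iii) to see the integral equals $s\neq 0$, and conclude by Theorem \ref{LAG-thm-Ideal membership by Residues}. Your added remark that $s\geqslant 1$ because the ideal is proper is a minor elaboration the paper leaves implicit.
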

\begin{proof}
By Theorem \ref{LAG-thm-characterisations of complete intersections 2}(iii),
\[
0\neq s=
\int_{\Gamma}
\frac{dF_{1}\wedge \cdots \wedge dF_{n}}{F_{1}\cdots F_{n}}
=
\int_{\Gamma}
\frac{\Jac(F)\ dz_{1}\wedge\cdots\wedge dz_{n}}{F_{1}\cdots F_{n}}.
\]
Hence $\Jac(F)\notin\langle F_{1},\dots, F_{n}\rangle$ by Theorem \ref{LAG-thm-Ideal membership by Residues}.
\end{proof}

\begin{Corollary}\label{LAG-cor-multiplicity of Jacobian is always lesser}
Let $F_{1}$,\dots, $F_{n}$ be holomorphic function germs in $\mathcal{O}_{\mathbb{C}^{n},0}$ vanishing at the origin so that the ideal $\mathcal{I}_{F}$ has finite intersection multiplicity with data $(p,q,s)$. Then the multiplicity of $\Jac(F)$ at the origin cannot be greater than or equal to $s$.
\end{Corollary}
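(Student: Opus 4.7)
The plan is to argue by contradiction, combining the preceding corollary (which says $\Jac(F) \notin \mathcal{I}_F$) with the multiplicity/membership dictionary established earlier.

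Suppose, toward a contradiction, that $\mult_{0}\Jac(F) \geqslant s$. By the characterisation of $\mathfrak{m}^{l}$ recalled in Paragraph~\ref{LAG-para-4} and Paragraph~\ref{LAG-para-6}, this is equivalent to $\Jac(F) \in \mathfrak{m}^{s}$. The next step is to promote this membership from $\mathfrak{m}^{s}$ all the way into $\mathcal{I}_{F}$. For that I would appeal to Proposition~\ref{LAG-prop-Relations between the intersection invariants}, which gives the inequality $q \leqslant s$; hence
\[
\mathfrak{m}^{s} \,\subseteq\, \mathfrak{m}^{q} \,\subseteq\, \mathcal{I}_{F},
\]
the second inclusion being the very definition of $q$. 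Consequently $\Jac(F) \in \mathcal{I}_{F}$.

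This contradicts Corollary~\ref{LAG-cor-Jacobian does not belong to original ideal}, which used the residue-theoretic Theorem~\ref{LAG-thm-Ideal membership by Residues} together with Theorem~\ref{LAG-thm-characterisations of complete intersections 2}(iii) to produce a nonzero residue integral of $\Jac(F)/(F_{1}\cdots F_{n})$ and hence to conclude $\Jac(F) \notin \mathcal{I}_{F}$. The assumption $\mult_{0}\Jac(F) \geqslant s$ must therefore be false, giving $\mult_{0}\Jac(F) \leqslant s-1$, which is the claim.

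There is no real obstacle here: the whole argument is a two-line chain of inclusions once the key inputs (the inequality $q \leqslant s$ and the non-membership of $\Jac(F)$ in $\mathcal{I}_{F}$) are in place. The only subtlety worth flagging is to note explicitly that one uses $q \leqslant s$ (not $p \leqslant s$ or $s \leqslant \binom{n+q-1}{q-1}$), since that is precisely the inequality from Proposition~\ref{LAG-prop-Relations between the intersection invariants} that converts a bound on vanishing order $s$ into ideal membership in $\mathcal{I}_{F}$.
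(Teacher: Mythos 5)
Your proof is correct and is essentially identical to the paper's own argument: assume $\mult_{0}\Jac(F)\geqslant s$, deduce $\Jac(F)\in\mathfrak{m}^{s}\subseteq\mathfrak{m}^{q}\subseteq\mathcal{I}_{F}$ via the inequality $q\leqslant s$ from Proposition~\ref{LAG-prop-Relations between the intersection invariants}, and contradict Corollary~\ref{LAG-cor-Jacobian does not belong to original ideal}. Nothing further is needed.
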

\begin{proof}
Suppose otherwise that $\mult_{0}\ \Jac(F)\geqslant s$, then $\Jac(F)\in\mathfrak{m}^{s}$. From the inequality $q\leqslant s$, there is an inclusion of ideals  $\mathfrak{m}^{s}\subseteq \mathfrak{m}^{q}\subseteq\langle F_{1},\dots,F_{n}\rangle$. Hence $\Jac(F)\in\langle F_{1},\dots,F_{n}\rangle$, which contradicts Corollary \ref{LAG-cor-Jacobian does not belong to original ideal}.
\end{proof}

\subsubsection{Miscellaneous Result} We will state the following result which will be used later.

\begin{Proposition}\label{LAG-prop-intersectionnumberofproductfg=sumofintersection}
Let $f_{1},\dots,f_{n-1}$, $f$, and $g$ be holomorphic function germs in $\mathcal{O}_{\mathbb{C}^{n},0}$ such that 
\[
\dim_{\mathbb{C}}\ 
\mathcal{O}_{\mathbb{C}^{n},0}
/
\langle f_{1},\dots,f_{n-1},fg\rangle
< \infty.
\]
Then
\[
\dim_{\mathbb{C}}\ 
\mathcal{O}_{\mathbb{C}^{n},0}
/
\langle f_{1},\dots,f_{n-1},fg\rangle
=
\dim_{\mathbb{C}}\ 
\mathcal{O}_{\mathbb{C}^{n},0}
/
\langle f_{1},\dots,f_{n-1},f\rangle
+
\dim_{\mathbb{C}}\ 
\mathcal{O}_{\mathbb{C}^{n},0}
/
\langle f_{1},\dots,f_{n-1},g\rangle.
\]
\end{Proposition}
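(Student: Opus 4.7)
The plan is to reduce the identity to additivity of $\mathbb{C}$-dimension along a three-term short exact sequence of $A$-modules, where $A := \mathcal{O}_{\mathbb{C}^n,0}/\langle f_1,\dots,f_{n-1}\rangle$. The nontrivial technical point is showing that $f$ and $g$ act as non-zero-divisors on $A$.

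Set $R := \mathcal{O}_{\mathbb{C}^n,0}$ and $A := R/\langle f_1,\dots,f_{n-1}\rangle$. Since $V(fg) = V(f)\cup V(g)$, the hypothesis $V(f_1,\dots,f_{n-1},fg) = \{0\}$ forces $V(f_1,\dots,f_{n-1},f) = \{0\}$ and $V(f_1,\dots,f_{n-1},g) = \{0\}$ as well, so by Theorem~\ref{LAG-thm-characterisations of complete intersection 1} all three quotients appearing in the identity are finite-dimensional over $\mathbb{C}$. Now $R$ is a regular, hence Cohen--Macaulay, local ring of Krull dimension $n$, and the ideal $\langle f_1,\dots,f_{n-1},fg\rangle$ is $\mathfrak{m}$-primary with exactly $n$ generators. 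In a Cohen--Macaulay local ring, any $\mathfrak{m}$-primary ideal generated by $\dim R$ elements is generated by a regular sequence; hence $f_1,\dots,f_{n-1},fg$ is a regular sequence, and in particular $fg$ is a non-zero-divisor on $A$. Since $fa = 0$ in $A$ forces $fg\cdot a = 0$ and thus $a = 0$, both $f$ and $g$ are non-zero-divisors on $A$.

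Next, I will build the short exact sequence of $A$-modules
$$0 \longrightarrow A/(g) \xrightarrow{\;\mu_f\;} A/(fg) \longrightarrow A/(f) \longrightarrow 0,$$
where $\mu_f(\overline{a}) := \overline{fa}$. Well-definedness is clear since $f\cdot gA \subseteq fg\,A$. Injectivity is precisely the non-zero-divisor statement for $f$: if $fa \in fg\,A$, write $fa = fgb$, so $f(a-gb) = 0$ in $A$ and hence $a \equiv gb$ modulo $gA$. The image is $fA/fgA$, so the cokernel is $(A/fgA)/(fA/fgA) \cong A/(f)$, giving the surjection onto $A/(f)$. Taking $\mathbb{C}$-dimensions, which is additive on short exact sequences of finite-dimensional vector spaces, yields exactly
$$\dim_{\mathbb{C}} A/(fg) \;=\; \dim_{\mathbb{C}} A/(g) \;+\; \dim_{\mathbb{C}} A/(f),$$
which is the claimed identity.

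The main obstacle is the Cohen--Macaulay input: without knowing that $f$ and $g$ are non-zero-divisors on $A$, the map $\mu_f$ could have a nontrivial kernel and the short exact sequence would acquire a correction term that would have to be estimated separately. Once this point is settled, the rest is formal bookkeeping with the sequence.
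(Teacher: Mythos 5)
Your proof is correct. Note that the paper does not actually prove this proposition: it simply cites D'Angelo's book, so any complete argument here is necessarily doing more than the paper does. Your route is the standard commutative-algebra one, and the key step --- that $f_1,\dots,f_{n-1},fg$ is a system of parameters in the Cohen--Macaulay ring $\mathcal{O}_{\mathbb{C}^n,0}$ and hence a regular sequence, so that $f$ (and $g$) act as non-zero-divisors on $A=\mathcal{O}_{\mathbb{C}^n,0}/\langle f_1,\dots,f_{n-1}\rangle$ --- is exactly the crux, and you handle it correctly; without it the map $\mu_f$ could indeed fail to be injective. The short exact sequence
\[
0 \longrightarrow A/(g) \xrightarrow{\;\mu_f\;} A/(fg) \longrightarrow A/(f) \longrightarrow 0
\]
is set up and verified properly, and dimension-additivity finishes the argument. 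Two small remarks: first, your preliminary observation that $V(f_1,\dots,f_{n-1},f)=V(f_1,\dots,f_{n-1},g)=\{0\}$ is harmless but redundant, since finite-dimensionality of $A/(f)$ and $A/(g)$ already follows from the exact sequence once $A/(fg)$ is finite-dimensional; second, the system-of-parameters step tacitly assumes the ideal $\langle f_1,\dots,f_{n-1},fg\rangle$ is proper (equivalently, in the local ring, that no generator is a unit) --- in the degenerate case where some generator is a unit the identity reduces to a triviality, and in the paper's applications all germs vanish at the origin, so nothing is lost.
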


\begin{proof}
See \cite[p~60, Theorem~1]{DAngelo-Book-1993}
\end{proof}

\section{Ideals Generated by the Components of Gradient}
\subsubsection{} In this section we shall study the ideals generated by the components of the gradient of a holomorphic function. Let $f\in\mathcal{O}_{\mathbb{C}^{n},0}$ be a holomorphic function germ such that $f(0)=0$. In a first moment, it will be shown that there exists a positive integer $k$ with
\[
f^{k}
\in 
\left\langle
\frac{\partial f}{\partial z_{1}},
\dots,
\frac{\partial f}{\partial z_{n}}
\right\rangle.
\]
In a second moment, more accurately, it will be shown that $k=\dim\ \mathbb{C}^{n}=n$ works (optimally) rendering $k$ effective.
\subsubsection{Example} In $1$-dimensional complex analysis, every holomorphic function $f(\zeta)$ with $f(0)=0$ may be factorised as
\[
f(\zeta)=\zeta^{k}g(\zeta),
\]
where $g(0)\neq 0$. A differentiation yields
\[
f'(\zeta)=\zeta^{k-1}(kg(\zeta)+\zeta g'(\zeta)),
\]
and hence $f\in\langle f'\rangle$. Therefore, $k=1$ works in this case. In the next few paragraphs we will recall some notions in algebraic geometry.

\subsubsection{Spec, Zariski Topology} Let $A$ be a commutative ring with $1$. We let 
\[
\Spec\ A:=\{
\mathfrak{p}\subset A:\ \mathfrak{p}
\text{ is a prime ideal in }A\}.
\]
For every ideal $I\subset A$, set
\[
V_{A}(I)
=
\{\mathfrak{p}\in\Spec A:\ 
I\subseteq \mathfrak{p}\}.
\]
The sets $V_{A}(I)$ are defined as closed sets in $\Spec\ A$, and the collection
\[
\{V_{A}(I):\ I\text{ is an ideal of }A\}
\]
defines the {\sl Zariski topology} of $\Spec\ A$. For principal ideals $\langle f\rangle$, $V_{A}(\langle f\rangle)$ may be written as $V_{A}(f)$. Therefore, 
\[
D_{A}(f):=\Spec\ A\backslash V_{A}(f)
=
\{\mathfrak{p}\in\Spec\ A:\ f\notin \mathfrak{p}\}
\]
is open in $\Spec\ A$. The collection
\[
\{D_{A}(f):\ f\in A\}
\]
forms a basis for the open set in the Zariski topology. To see this, for any ideal $I$, one has
\[
\Spec\ A\backslash V_{A}(I)
=
\bigcup_{f\in I}D_{A}(f).
\]

\subsubsection{}\label{idealgrad-para-4} For any ideal $I\subset A$, there is a one-to-one correspondance between $\Spec\ A/I$ and $V_{A}(I)$. On the other hand, let $f\in A$ and $A_{f}$ be its localisation. Every element in $A_{f}$ is a class with representative $a/f^{k}$ for some $a\in A$ and $k\in\mathbb{N}$. The two representatives $a/f^{k}$ and $b/f^{l}$ are equal if there exists $j\geqslant 0$ such that
\[
f^{j}(af^{l}-bf^{k})=0
\eqno
{\scriptstyle{(\text{in }A})}.
\]
It is easily seen that like $\mathbb{Q}$ the quotient numbers of $\mathbb{Z}$, $A_{f}$ has a ring structure, and there is a one-to-one correspondance (as sets) between $\Spec\ A_{f}$ and $D_{A}(f)$.

\subsubsection{} Recall that a commutative ring with $1$ is {\sl semi-local} if it has only finitely many maximal ideals. We state the  following Artin-Tate theorem.

\begin{Theorem}
Let $A$ be a Noetherian integral domain. Then $A$ is semi-local with $\dim\ A\leqslant 1$ if and only if there exists $f\in A$ such that $A_{f}$ is a field.
\end{Theorem}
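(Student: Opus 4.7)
The plan is to establish both directions separately. For the implication $(\Rightarrow)$, given the finitely many maximal ideals $\mathfrak{m}_{1},\dots,\mathfrak{m}_{r}$ of $A$, I would pick a non-zero element $f_{i}\in\mathfrak{m}_{i}$ for each $i$ (taking $f=1$ if $A$ is a field) and form $f:=f_{1}\cdots f_{r}$. Since $\dim A\leqslant 1$, every non-zero prime of $A$ has height $1$, is therefore maximal, and consequently equals some $\mathfrak{m}_{i}$ and contains $f$. Thus $\Spec A_{f}=\{(0)\}$, and because $A_{f}$ is a non-zero subring of $\Frac(A)$, it is a field.

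For the converse, my first step is the observation that $A_{f}$ coincides with $K:=\Frac(A)$ and that every non-zero prime of $A$ contains $f$. Indeed, invertibility in $A_{f}$ of any non-zero $a\in A$ produces $b\in A$ and $n\geqslant 0$ with $ab=f^{n}$ in the domain, so every non-zero element of $A$ divides a power of $f$; this both forces $A_{f}=K$ and, by applying primality to $gb=f^{n}$ for $0\neq g\in\mathfrak{p}$, forces $f\in\mathfrak{p}$ for every non-zero prime $\mathfrak{p}$.

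The core (and main obstacle) is establishing $\dim A\leqslant 1$. Fixing a non-zero prime $\mathfrak{p}$, the localization $A_{\mathfrak{p}}$ is a Noetherian local domain still satisfying $(A_{\mathfrak{p}})_{f}=K$, so by the preceding paragraph every non-zero prime of $A_{\mathfrak{p}}$ contains $f$. I would contradict $\mathrm{ht}(\mathfrak{p})\geqslant 2$ via the sub-claim: any Noetherian local domain $(R,\mathfrak{m})$ of dimension at least $2$ has infinitely many height-one primes. For if $\mathfrak{q}_{1},\dots,\mathfrak{q}_{s}$ exhausted the height-one primes, the inequality $\mathrm{ht}(\mathfrak{m})\geqslant 2>1$ would give $\mathfrak{m}\not\subseteq\mathfrak{q}_{i}$ for each $i$, and prime avoidance would provide $g\in\mathfrak{m}\setminus\bigcup_{i}\mathfrak{q}_{i}$; since $g\neq 0$, Krull's principal ideal theorem would supply a height-one prime containing $g$, contradicting the list. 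Applied to $R=A_{\mathfrak{p}}$, this produces infinitely many height-one primes of $A_{\mathfrak{p}}$, each containing $f$, and hence (by height reasons) each a minimal prime over $\langle f\rangle$; this contradicts the Noetherian finiteness of minimal primes over a fixed ideal. Thus $\mathrm{ht}(\mathfrak{p})\leqslant 1$ for every non-zero $\mathfrak{p}$, proving $\dim A\leqslant 1$.

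Semi-locality then follows at once: by the dimension bound every non-zero prime is maximal, and each such prime is a height-one prime containing $f$, hence a minimal prime over $\langle f\rangle$; Noetherianity bounds the number of these to be finite.
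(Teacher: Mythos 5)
Your argument is correct, and it is worth noting that the paper itself gives no proof of this theorem at all: it simply cites G\"ortz--Wedhorn (Corollary B.62), so your proposal supplies a genuine standalone proof rather than an alternative to one in the text. Both directions check out. The forward direction is the routine one: in a domain with $\dim A\leqslant 1$ every non-zero prime is maximal (your phrase ``has height $1$, is therefore maximal'' really rests on the dimension bound, not on the height computation, but the logic is sound), so the product $f$ of non-zero elements chosen from the finitely many maximal ideals lies in every non-zero prime, forcing $\Spec A_{f}=\{(0)\}$. The converse is where the content lies, and your chain of reductions is valid: invertibility of every non-zero $a/1$ in $A_{f}$ gives $ab=f^{n}$, hence every non-zero prime contains $f$; the localization $A_{\mathfrak{p}}$ inherits the hypothesis because $(A_{\mathfrak{p}})_{f}=\Frac(A)$; the sub-claim that a Noetherian local domain of dimension $\geqslant 2$ has infinitely many height-one primes (prime avoidance plus Krull's Hauptidealsatz) is standard and correctly deployed; and the finiteness of minimal primes over $\langle f\rangle$ yields both the contradiction that bounds the dimension and, at the end, the semi-locality. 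One small point you pass over silently: in the sub-claim the element $g$ produced by prime avoidance is automatically non-zero because $0$ lies in every $\mathfrak{q}_{i}$ (and if the list of height-one primes were empty one takes any non-zero element of $\mathfrak{m}$); this is harmless but worth a half-sentence.
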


\begin{proof}
See \cite{Gortz-Wedhorn-2010}, page 562, Corollary B62.
\end{proof}

\subsubsection{}\label{idealgrad-para-6} Recall that the {\sl Krull dimension} of $A$ is given by
\[
\dim\ A
:=
\sup\{k:\ 
\mathfrak{p}_{0}
\subsetneq
\mathfrak{p}_{1}
\subsetneq
\cdots
\subsetneq
\mathfrak{p}_{k}
\}.
\]
Moreover, if $A$ is local, Artin-Tate's theorem may be restated as follows: there exists $f\in A$ such that $A_{f}$ is a field if and only if $\dim\ A\leqslant 1$.

\subsubsection{} For any germ variety $V(\mathcal
{I})$ defined by an ideal $\mathcal{I}\subset \mathcal{O}_{\mathbb{C}^{n},0}$, the Krull dimension of $\mathcal{O}_{\mathbb{C}^{n},0}
/\mathcal{I}$ coincides with the usual intuition of dimension. 

\subsubsection{}\label{idealgrad-para-8} To see this, recall that the {\sl Weierstrass dimension} of a germ of complex space $(X,x)$ is the least number $k$ such that there exists a Noether normalisation $\pi^{*}:\mathcal{O}_{\mathbb{C}^{k},x}
\hookrightarrow \mathcal{O}_{X,x}$. Both the Weierstrass dimension of $(X,x)$ and the Krull dimension of $\mathcal{O}_{X,x}
:=\mathcal{O}_{\mathbb{C}^{n},x}
/\mathcal{I}(X,x)$ coincide (~ \cite[Theorem~4.1.9, pp~131]{deJong-Pfister-2000}). The Noether normalisation $\pi^{*}$ is uniquely induced by the projection 
\[
\pi:(X,x)\rightarrow (\mathbb{C}^{k},x)
\]
of the germ variety $(X,x)$ onto $(\mathbb{C}^{n},x)$ with finite fibres. By \cite[p~129, Lemma~4.14]{deJong-Pfister-2000}, $\dim\ (X,x)=\dim\ (\mathbb{C}^{k},x)=k$. Hence to say that $\dim\ \mathcal{O}_{X,x}\leqslant 1$ is to say that either $X$ is of dimension $1$ or $0$ (but the dimension need not be pure).

\subsubsection{} The following lemma is a restatement of the Artin-Tate's theorem in more geometric terms.

\begin{Lemma}\label{idealgrad-lem-geometric meaning of artin-tate theorem}
Let $A$ be an integral Noetherian ring, and let $(0)$ be a point in $\Spec\ A$. Then the set $\{(0)\}$ is open in the Zariski topology of $\Spec\ A$ if and only if $\Spec\ A$ is a finite set and $\dim\ A\leqslant 1$.
\end{Lemma}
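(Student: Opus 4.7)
The plan is to reduce the topological statement about $\{(0)\}$ being open to a purely algebraic condition on a localisation of $A$, and then apply the Artin--Tate theorem cited just above (\S\ref{idealgrad-para-6}). The key observation is that since $A$ is an integral domain, $(0)$ is a prime ideal of $A$, and it is contained in every other prime, so $(0)$ is the generic point of $\Spec A$. Because the sets $D_A(f)$ form a basis of the Zariski topology, the singleton $\{(0)\}$ is open if and only if there exists $f\in A$ with $D_A(f)=\{(0)\}$. By the correspondence recalled in \S\ref{idealgrad-para-4} between $D_A(f)$ and $\Spec A_f$, this is equivalent to $\Spec A_f=\{(0)\}$, i.e.\ $A_f$ has a unique prime ideal which is $(0)$, i.e.\ $A_f$ is a field.

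For the forward direction, I would assume $\{(0)\}$ is open and produce $f\in A$ such that $A_f$ is a field by the above equivalence. The Artin--Tate theorem then forces $A$ to be semi-local with $\dim A\leqslant 1$. Semi-locality gives only finitely many maximal ideals, and since every prime of $A$ is either $(0)$ or maximal (because $\dim A\leqslant 1$), the set $\Spec A$ is finite. This yields both conclusions.

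For the converse, assume $\Spec A$ is finite and $\dim A\leqslant 1$. Finiteness of $\Spec A$ forces $A$ to have only finitely many maximal ideals, so $A$ is semi-local, and the Artin--Tate theorem provides some $f\in A$ such that $A_f$ is a field. Reversing the chain of equivalences, $\Spec A_f=\{(0)\}$ corresponds to $D_A(f)=\{(0)\}$, hence $\{(0)\}$ is an open singleton in $\Spec A$.

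I expect the main (and essentially only) difficulty is being careful with the identification $D_A(f)\leftrightarrow \Spec A_f$: one must check that under this bijection the unique prime $(0)$ of $A_f$ corresponds to the prime $(0)$ of $A$, which is immediate since the localisation map $A\to A_f$ is injective for $A$ an integral domain. Everything else is a clean application of the Artin--Tate theorem and of the combinatorial observation that in a Noetherian integral domain of Krull dimension at most one, every non-generic prime is maximal, so finitely many maximals is equivalent to finiteness of $\Spec A$.
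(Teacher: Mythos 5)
Your proposal is correct and follows essentially the same route as the paper's proof: both reduce openness of $\{(0)\}$ to the existence of $f$ with $D_A(f)=\{(0)\}$, identify this with $A_f$ being a field via the correspondence $D_A(f)\leftrightarrow\Spec A_f$, invoke the Artin--Tate theorem, and then observe that under $\dim A\leqslant 1$ semi-locality is equivalent to finiteness of $\Spec A$ since every non-zero prime is maximal. No gaps.
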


\begin{proof}
First, it will be shown that the singleton $\{(0)\}$ is open in $\Spec\ A$ if and only if there exists $f\in A$ such that $A_{f}$ is a field. Then secondly, it will be shown that $\Spec\ A$ is a finite set and $\dim\ A\leqslant 1$ if and only if $A$ is semi-local (finitely many maximal ideals) and $\dim\ A\leqslant 1$.

For the first assertion, suppose that $\{(0)\}$ is open, then for some index $\Lambda$,
\[
\{(0)\}=
\bigcup_{i\in\Lambda}
D_{A}(f_{i}).
\]
Therefore, $(0)\in D_{A}(f_{i})$ for some $i\in \Lambda$ and hence 
\[
\{(0)\}
\subseteq
D_{A}(f_{i})
\subseteq 
\bigcup_{i\in\Lambda}D_{A}(f_{i})
=
\{(0)\},
\]
so that $\{(0)\}=D_{A}(f_{i})$. By paragraph \ref{idealgrad-para-4}, this means that the ring $A_{f}$ has only $(0)$ as its prime ideal, and so $A_{f}$ is a field. Conversely, if there exists $f\in A$ such that $A_{f}$ is a field, then $\Spec\ A_{f}=D_{A}(f)=\{(0)\}$ as sets. Consequently, the singleton $\{(0)\}$ is open in $\Spec\ A$.

For the second assertion, suppose that $\Spec\ A$ is a finite set and $\dim\ A\leqslant 1$. The first condition implies that $A$ only has finitely many prime ideals. On the other hand, for any maximal chain of prime ideals
\[
(0):=\mathfrak{p}_{0}
\subsetneq 
\mathfrak{p}_{1}
\subsetneq
\cdots
\subsetneq
\mathfrak{p}_{k}
\]
one has $k\leqslant 1$ by the second condition. If $k=1$, then $\mathfrak{p}_{k}$ is a maximal ideal. If $k=0$, then $\mathfrak{p}_{0}$ is just the zero ideal. Hence any non-zero prime ideal $\mathfrak{p}_{1}$ is maximal, and since $A$ has finitely many prime ideals, $A$ is semi-local. Conversely, suppose that $A$ is semi-local and $\dim\ A\leqslant 1$. By the chain of inclusion of prime ideals above, any non-zero prime ideal is maximal by second condition. Moreover, $A$ is semi-local means that there are only finitely many maximal ideals. Hence $\Spec\ A$ is a finite set and $\dim\ A\leqslant 1$.
\end{proof}

\subsubsection{} The lemma above is used to prove the following lemma.

\begin{Lemma}
Let $A$ be a local integral domain, $(0)\neq \mathfrak{m}$, and $\dim\ A=n\geqslant 1$. Let $f\in\ A\backslash \{0\}$ be a non-zero element with $f\in\mathfrak{m}$. Then there exists a prime ideal $\mathfrak{p}$ such that $\dim\ A/\mathfrak{p}=1$ and $f\notin \mathfrak{p}$.
\end{Lemma}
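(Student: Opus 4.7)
The plan is to find $\mathfrak{p}$ by a maximality argument among primes avoiding $f$, and then to extract $\dim A/\mathfrak{p}=1$ from the previous lemma applied to the integral Noetherian quotient $A/\mathfrak{p}$.

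First, I would introduce $\Sigma := \{\mathfrak{q}\in\Spec\ A:\ f\notin\mathfrak{q}\}$. Since $A$ is an integral domain and $f\neq 0$, we have $(0)\in\Sigma$, so $\Sigma\neq\emptyset$. A Zorn-type argument (or Noetherianness, which is standing in this section) gives a maximal element $\mathfrak{p}\in\Sigma$: for any totally ordered family in $\Sigma$, its union is a prime ideal and cannot contain $f$, hence lies in $\Sigma$. Because $f\in\mathfrak{m}\setminus\mathfrak{p}$, we get $\mathfrak{p}\subsetneq\mathfrak{m}$, so $A/\mathfrak{p}$ is not a field and $\dim\ A/\mathfrak{p}\geqslant 1$.

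Next, set $B:=A/\mathfrak{p}$, which is an integral Noetherian local domain, and let $\bar f\in B$ denote the image of $f$, which is nonzero by choice of $\mathfrak{p}\in\Sigma$. The key observation is that the open subset $D_{B}(\bar f)\subseteq \Spec\ B$ is exactly $\{(0)\}$. Indeed, under the usual correspondence, points of $D_{B}(\bar f)$ correspond to primes $\mathfrak{q}\subset A$ with $\mathfrak{p}\subseteq\mathfrak{q}$ and $f\notin\mathfrak{q}$; but by maximality of $\mathfrak{p}$ in $\Sigma$, any such $\mathfrak{q}$ must coincide with $\mathfrak{p}$. Hence $D_{B}(\bar f)=\{(0)\}$, so the generic point of $\Spec\ B$ is open.

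Applying Lemma \ref{idealgrad-lem-geometric meaning of artin-tate theorem} to the integral Noetherian ring $B$, we obtain that $\Spec\ B$ is finite and $\dim\ B\leqslant 1$. Combining this with the lower bound $\dim\ B\geqslant 1$ from the first paragraph yields $\dim\ A/\mathfrak{p}=\dim\ B=1$, while $f\notin\mathfrak{p}$ holds by construction. The only real obstacle here is the translation step identifying $D_{B}(\bar f)$ with $\{(0)\}$; once the maximality of $\mathfrak{p}$ in $\Sigma$ is packaged as openness of the generic point in $\Spec\ A/\mathfrak{p}$, the previous lemma does the rest.
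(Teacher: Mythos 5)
Your proof is correct, but it takes a genuinely different route from the paper's. The paper builds the prime $\mathfrak{p}$ from below: it constructs by induction a chain $(0)=\mathfrak{p}_{0}\subsetneq\mathfrak{p}_{1}\subsetneq\cdots\subsetneq\mathfrak{p}_{k}$ of primes avoiding $f$, each of height one over its predecessor, passing at each stage to the quotient $A_{k}=A/\mathfrak{p}_{k}$ and splitting into the two cases $D_{A_{k}}(f)=\{(0)\}$ (which stops the induction via Lemma \ref{idealgrad-lem-geometric meaning of artin-tate theorem}) or $D_{A_{k}}(f)\supsetneq\{(0)\}$ (which produces $\mathfrak{p}_{k+1}$); termination is forced by the inequality $\Ht(\mathfrak{p}_{k})+\dim(A/\mathfrak{p}_{k})\leqslant\dim A=n<\infty$. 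You instead pick $\mathfrak{p}$ in one stroke as a maximal element of $\Sigma=\{\mathfrak{q}\in\Spec A:\ f\notin\mathfrak{q}\}$ (which exists by Zorn, or immediately from Noetherianness), and the maximality translates exactly into $D_{A/\mathfrak{p}}(\bar f)=\{(0)\}$, so the same key Lemma \ref{idealgrad-lem-geometric meaning of artin-tate theorem} gives $\dim A/\mathfrak{p}\leqslant 1$, while $f\in\mathfrak{m}\setminus\mathfrak{p}$ forces $\mathfrak{p}\subsetneq\mathfrak{m}$ and hence $\dim A/\mathfrak{p}\geqslant 1$. Both arguments hinge on the same Artin--Tate lemma and on $A$ (hence $A/\mathfrak{p}$) being Noetherian, a hypothesis the lemma's statement omits but which holds in the intended application to $\mathcal{O}_{W,0}$; you are right to flag it. Your version is shorter, dispenses with the height bookkeeping, and does not even invoke the finiteness of $\dim A$ except through the quoted lemma; the paper's version, in exchange, exhibits a saturated chain from $(0)$ to $\mathfrak{p}$ of length at most $n-1$, which keeps the geometric picture of cutting the variety down one dimension at a time. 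One small point of care in your write-up: the upper bound for a chain in $\Sigma$ is the union of the chain, and you should note explicitly that a union of a totally ordered family of primes is prime and still avoids $f$ — you do say this, so the argument is complete.
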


\begin{proof}
We will construct by induction on $k$ a sequence of inclusion of prime ideals
\[
(0):=\mathfrak{p}_{0}
\subsetneq
\mathfrak{p}_{1}
\subsetneq
\cdots
\subsetneq
\mathfrak{p}_{k}
\subsetneq
\mathfrak{m}
\]
with the following conditions:

\smallskip\noindent{\bf (i)}
$f\notin \mathfrak{p}_{k}$;

\smallskip\noindent{\bf (ii)}
for every $1\leqslant i\leqslant k$, the prime ideal $\mathfrak{p}_{i}$ is of height one over $\mathfrak{p}_{i-1}$. In other words, there is no prime ideal $\mathfrak{q}$ with $\mathfrak{p}_{i-1}\subsetneq \mathfrak{q}\subsetneq \mathfrak{p}_{i}$;

\smallskip\noindent{\bf (iii)}
either $\dim\ A/\mathfrak{p}_{k}=1$ or $\dim\ A/\mathfrak{p}_{k}\leqslant 
(\dim\ A)-k$.

Suppose such a $\mathfrak{p}_{k}$ is constructed, we see that the second condition in (iii) is always a consequence of (ii). This is because  from the definition of height of a prime, one always has $k\leqslant \Ht(\mathfrak{p}_{k})$. By \cite[p~133, Remark~4.1.15]{deJong-Pfister-2000}, one always has
\[
\Ht(\mathfrak{p})+
\dim(A/\mathfrak{p}_{k})
\leqslant \dim\ A.
\]
Therefore,
\[
k+\dim(A/\mathfrak{p}_{k})
\leqslant 
\Ht(\mathfrak{p})+
\dim(A/\mathfrak{p}_{k})
\leqslant \dim\ A,
\]
and consequently $\dim(A/\mathfrak{p}_{k})\leqslant (\dim\ A)-k$.

 \textbf{\textsc{Step 1}:}  We let $A_{0}:=A/\mathfrak{p}_{0}=A$. By hypothesis that $f\neq 0$ in $A$, therefore $(0)\in D_{A}(f)$, and so the open set $D_{A}(f)$ is non-empty. There are two cases according to whether $D_{A}(f)$ has exactly one or more than one elements:  

\smallskip\noindent{\bf (1)} 
Suppose that  $D_{A}(f)=\{(0)\}$, then set $\{(0)\}$ is Zariski open in $\Spec\ A$. By Lemma \ref{idealgrad-lem-geometric meaning of artin-tate theorem} and remark in paragraph \ref{idealgrad-para-6}, one has $\dim\ A\leqslant 1$. By the hypothesis that $(0)\neq \mathfrak{m}$, hence $\dim\ A=1$ and the proof is finished.

\smallskip\noindent{\bf (2)} Otherwise, the Zariski open set $D_{A}(f)$ contains another prime ideal $\mathfrak{p}_{1}'$ in $A$ such that $\mathfrak{p}_{1}'\neq (0)$. Moreover, since $\dim\ A$ is finite, $\Ht(\mathfrak{p}')$ is finite, say $h_{1}$. Consider a  maximal chain of prime ideals
\[
(0)=\mathfrak{p}_{0}
\subsetneq
\mathfrak{q}_{1,1}
\subsetneq
\cdots
\subsetneq
\mathfrak{q}_{1,h_{1}}
=\mathfrak{p}_{1}'
\]
whose length is  $\Ht(\mathfrak{p}_{1}')$. By maximality of the chain, $\Ht(\mathfrak{q}_{1,1})=1$. Moreover, $f\notin \mathfrak{p}_{1}'$ implies that $f\notin\mathfrak{q}_{1,1}$. Therefore, we may let $\mathfrak{p}_{1}:=\mathfrak{q}_{1,1}$.

 \textbf{\textsc{Inductive Step}:} Once the prime ideals $\mathfrak{p}_{1}$, \dots, $\mathfrak{p}_{k}$ have been constructed, the existence of $\mathfrak{p}_{k+1}$ that satisfies the first two conditions (i) and (ii) will be constructed. The idea is to pass through the quotient $\pi_{k}:A\rightarrow A/\mathfrak{p}_{k}:=A_{k}$ and repeat the steps as in  \textbf{\textsc{Step 1} }. This time, since $f\in \mathfrak{m}$ but $f\notin\mathfrak{p}_{k}$, hence $\mathfrak{p}_{k}\neq\mathfrak{m}$. So in $A_{k}$, the zero ideal $(0)$ is not maximal. Also, the hypothesis that $f\notin \mathfrak{p}_{k}$ implies that the class $f$ is not zero in $A_{k}$. Therefore, the open set $D_{A_{k}}(f)$ is not empty since it contains the zero ideal $(0)$. We study the open set $D_{A}(f)$ with the two following cases just as before:
 
 \smallskip\noindent{\bf (1)}
 Either $D_{A_{k}}(f)$ is a singleton, meaning $D_{A_{k}}(f)=\{(0)\}$. Then $\{(0)\}$ is open in $\Spec\ A_{k}$. By Lemma \ref{idealgrad-lem-geometric meaning of artin-tate theorem}, one has $\dim\ A_{k}\leqslant 1$. But since $(0)\neq \mathfrak{m}$, $\dim\ A_{k}=1$.
 
  \smallskip\noindent{\bf (2)}
Otherwise, in $A_{k}$ we may find a prime ideal $\mathfrak{P}_{k+1}\subseteq A_{k}$ such that $\mathfrak{P}_{k}\in D_{A_{k}}(f)$ and $\mathfrak{P}_{k+1}\neq (0)$. Since $\dim\ A_{k}$ is finite, so is $\Ht(\mathfrak{P}_{k+1})$, which is for example $h_{k}$. We let 
\[
(0):=\mathfrak{P}_{0}
\subsetneq
\mathfrak{Q}_{k+1,1}
\subsetneq
\cdots
\subsetneq 
\mathfrak{Q}_{h_{k+1},1}
:=
\mathfrak{P}_{k+1}
\]
be a maximal chain of prime ideals in $A_{k}$ corresponding whose length corresponds to the height of $\mathfrak{P}_{k+1}$. Therefore $\mathfrak{Q}_{k+1,1}$ is of height one by the maximality of the chain, and therefore we may let $\mathfrak{p}_{k+1}=\pi^{-1}(\mathfrak{Q}_{k+1,1})$. Moreover, since the class $f$ does not belong to $\mathfrak{P}_{k+1}$, the element $f$ does not belong to $\mathfrak{p}_{k+1}$. Hence $\mathfrak{p}_{k+1}$ satisfies the two conditions and the inductive step is complete.

To conclude, since $\dim\ A=n$, the length of the chain $k$ is at most $n-1$. In this case one has
\[
\dim\ A/\mathfrak{p}_{n-1}\leqslant 1.
\]
But since $f\in\mathfrak{m}$ and $f\notin\mathfrak{p}_{n-1}$, so $\dim\ A/\mathfrak{p}_{n-1}=1$, and the proof is complete.
\end{proof}

\subsubsection{}

\begin{Lemma}
Let $\mathcal{I}\subseteq
\mathcal{O}_{\mathbb{C}^{n},0}$ be an ideal and $Y:=V(\mathcal{I})$ be the variety defined by $\mathcal{I}$. Let $f$ be a holomorphic function germ vanishing at the origin and $X=V(\mathcal{I}+(f))$ be the variety defined by $\mathcal{I}+(f)$. If the inclusion $X\subsetneq Y$ is strict, then there exists an irreducible curve $C\subseteq Y$ passing through the origin such that $f|_{C}\not\equiv 0$.
\end{Lemma}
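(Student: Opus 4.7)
The plan is to reduce the statement to the preceding lemma by restricting to a suitable irreducible component of $(Y,0)$ on which $f$ does not vanish identically.

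First, I would decompose the germ $(Y,0)=(Y_{1},0)\cup\cdots\cup(Y_{r},0)$ into irreducible components, each of them passing through the origin (paragraph \ref{LAG-para-1-4}). Since $X\subsetneq Y$ strictly, $f$ does not vanish identically on $Y$; equivalently, invoking the Nullstellensatz together with the decomposition of the radical
\[
\mathcal{I}(Y,0)=\sqrt{\mathcal{I}}=\bigcap_{i=1}^{r}\mathcal{I}(Y_{i},0),
\]
the hypothesis forces $f\notin\mathcal{I}(Y_{i},0)$ for at least one index $i$, and I fix such a component $Y_{i}$.

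Next, I would pass to the analytic local ring $A:=\mathcal{O}_{Y_{i},0}=\mathcal{O}_{\mathbb{C}^{n},0}/\mathcal{I}(Y_{i},0)$. Since $Y_{i}$ is irreducible, $\mathcal{I}(Y_{i},0)$ is prime, so $A$ is a local integral domain with maximal ideal $\mathfrak{m}_{A}$. The class $\bar{f}$ of $f$ in $A$ is nonzero by the choice of $Y_{i}$, and lies in $\mathfrak{m}_{A}$ since $f(0)=0$; in particular $\mathfrak{m}_{A}\neq(0)$, so $\dim A\geqslant 1$.

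I would then split into two cases. If $\dim A=1$, then by the identification of Krull and Weierstrass dimensions recalled in paragraph \ref{idealgrad-para-8}, the germ $Y_{i}$ is itself an irreducible curve through the origin, and $C:=Y_{i}$ does the job. Otherwise $\dim A\geqslant 2$, and I would apply the previous lemma to the pair $(A,\bar{f})$ to obtain a prime ideal $\mathfrak{p}\subsetneq A$ satisfying $\dim A/\mathfrak{p}=1$ and $\bar{f}\notin\mathfrak{p}$. Lifting $\mathfrak{p}$ through the quotient $\pi\colon\mathcal{O}_{\mathbb{C}^{n},0}\to A$ produces a prime $\widetilde{\mathfrak{p}}:=\pi^{-1}(\mathfrak{p})$ containing $\sqrt{\mathcal{I}}$, and I would take $C:=V(\widetilde{\mathfrak{p}})$. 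Then $C$ is irreducible, passes through the origin, is contained in $Y_{i}\subseteq Y$, has Weierstrass dimension $1$ (again by paragraph \ref{idealgrad-para-8}), and satisfies $f|_{C}\not\equiv 0$ because $f\notin\widetilde{\mathfrak{p}}$.

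The step I would expect to require the most care is the last one: translating the purely algebraic conclusion of the previous lemma (existence of a prime $\mathfrak{p}$ with $\dim A/\mathfrak{p}=1$) into a genuine geometric irreducible curve germ. This uses the correspondence between primes of the analytic local ring $A$ and irreducible subgerms of $(Y_{i},0)$ through the origin, together with the coincidence of Krull and Weierstrass dimension for analytic local rings; once those identifications are in place, each of the properties required of $C$ reads off directly from the corresponding property of $\widetilde{\mathfrak{p}}$.
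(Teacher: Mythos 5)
Your proposal is correct and follows essentially the same route as the paper: first locate an irreducible component of $(Y,0)$ on which $f$ does not vanish identically (you do this via the decomposition of $\sqrt{\mathcal{I}}$ into the primes of the components, the paper via primary decomposition of $\mathcal{I}$ followed by the Nullstellensatz — the same argument in two guises), then pass to the local integral domain $\mathcal{O}_{Y_i,0}$ and apply the preceding lemma to produce a prime with one-dimensional quotient avoiding $f$, whose pullback cuts out the desired curve. Your extra case split on $\dim A=1$ is harmless but unnecessary, since the preceding lemma already covers that case with $\mathfrak{p}=(0)$.
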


\begin{proof}
Almost half of the proof is done by the previous lemma. It suffices to observe that if the inclusion $X\subsetneq Y$ is strict, then there exists an irreducible component $W\subseteq Y$ passing through the origin on which $f|_{W}\not\equiv 0$. Once this is proved, then the condition that $f|_{W}\not\equiv 0$ implies that $f\notin\mathcal{I}(W,0)$. Since $W$ is irreducible, the ideal  $\mathcal{I}(W,0)$ is prime. Moreover, the dimension of $W$ is greater than $1$, otherwise $W$ will just be the origin but $f(0)=0$, which contradicts that $f|_{W}\not\equiv 0$. Put differently, one has $f$ a non-zero element in $\mathcal{O}_{W,0}
=\mathcal{O}_{\mathbb{C}^{n},0}
/\mathcal{I}(W,0)$, with $f\in\mathfrak{m}$. Moreover, $\mathcal{O}_{W,0}$ is a local integral domain with $(0)\neq \mathfrak{m}$ since $\dim\ W\geqslant 1$. By previous lemma, there exists a prime ideal $\mathfrak{P}$ in $\mathcal{O}_{W,0}$ such that the class $f$ does not lie in $\mathfrak{P}$ and $\dim\ \mathcal{O}_{W,0}/\mathfrak{P}=1$. If we let $\pi:\mathcal{O}_{\mathbb{C}^{n},0}\rightarrow 
\mathcal{O}_{W,0}$ be the usual ring homomorphism by quotient, one has the prime ideal $\mathfrak{p}:=\pi^{-1}\mathfrak{P}$ such that $\dim
\mathcal{O}_{\mathbb{C}^{n},0}
/\mathfrak{p}=\dim\ \mathcal{O}_{W,0}
/\mathfrak{P}=1$, with $f\notin\mathfrak{p}$. Hence  $C=V(\mathfrak{p})$ is an irreducible one-dimensional variety on which $f$ does not totally vanish.

It remains to prove the observation. By Primary decomposition theorem, the ideal $\mathcal{I}$ may be decomposed as an intersection 
\[
\mathcal{I}=
\bigcap_{i=1}^{M}
P_{i}
\]
of primary ideals $P_{i}$, whose radical $\sqrt{P_{i}}:=\mathfrak{p}_{i}$ is prime. Hence
\[
Y=V(\mathcal{I})=\bigcup_{i=1}^{M}V(P_{i})
=
\bigcup_{i=1}^{M}V(\mathfrak{p}_{i})
\]
where the last equality follows from Nullstellensatz.

We claim that there exists $i$ such that $f|_{V(\mathfrak{p}_{i})}\not\equiv 0$. Otherwise, for every $1\leqslant i\leqslant M$, one has $f\in \mathcal{I}(V(\mathfrak{p}_{i}),0)=\mathfrak{p}_{i}$ by Nullstellensatz. Therefore, there exists a positive integer $k$ such that for all $1\leqslant i\leqslant M$, one has $f^{k}\in P_{i}$, and hence $f^{k}\in \mathcal{I}$. Therefore,
\[
X=V(\mathcal{I}+(f))
=
V(\mathcal{I})\cap V(f)
=
V(\mathcal{I})\cap V(f^{k})
=
V(\mathcal{I}+(f^{k}))
=
V(\mathcal{I})
=
Y,
\]
which contradicts our assumption that $X\subsetneq Y$. The proof is complete.
\end{proof}

\subsubsection{} We have therefore arrived at one of the main results of this section.

\begin{Proposition}
Let $f\in\mathcal{O}_{\mathbb{C}^{n},0}$ with $f(0)=0$. Then there exists $k\in\mathbb{N}$ such that
\[
f^{k}\in 
\left\langle
\frac{\partial f}{\partial z_{1}},
\dots,
\frac{\partial f}{\partial z_{n}}
\right\rangle.
\]
\end{Proposition}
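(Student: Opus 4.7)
The plan is to reduce the statement to the assertion that $f$ vanishes on the germ $V(\mathcal{J})$, where $\mathcal{J} := \langle \partial f/\partial z_{1}, \ldots, \partial f/\partial z_{n}\rangle$. Indeed, by the Nullstellensatz (Paragraph~4.1.6(v)), the existence of $k$ with $f^{k} \in \mathcal{J}$ is equivalent to $f \in \sqrt{\mathcal{J}} = \mathcal{I}(V(\mathcal{J}),0)$. So the entire problem becomes: show that $f|_{V(\mathcal{J})} \equiv 0$ in a neighborhood of the origin.

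I would argue by contradiction. Suppose $f$ does not vanish identically on $V(\mathcal{J})$. Then the strict inclusion $V(\mathcal{J} + (f)) \subsetneq V(\mathcal{J})$ holds, and the previous lemma produces an irreducible curve germ $C \subseteq V(\mathcal{J})$ through $0$ on which $f|_{C} \not\equiv 0$. Invoking the normalization (Puiseux parametrization) of the one-dimensional irreducible germ $C$, pick a non-constant holomorphic map $\phi = (\phi_{1}, \ldots, \phi_{n}) : (\mathbb{C},0) \to (C,0)$, so that $g(t) := f(\phi(t))$ is a holomorphic function of one variable with $g(0) = 0$ and $g \not\equiv 0$.

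The crux is then a one-line chain-rule computation: since $\phi(t) \in C \subseteq V(\mathcal{J})$, every $\partial f/\partial z_{i}$ vanishes identically along the curve, so
\[
g'(t) = \sum_{i=1}^{n} \frac{\partial f}{\partial z_{i}}(\phi(t))\, \phi_{i}'(t) \equiv 0.
\]
A holomorphic function of one variable with zero derivative is constant, hence $g \equiv g(0) = 0$, contradicting $f|_{C} \not\equiv 0$.

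The main obstacle is not the chain-rule step but the production of the curve $C$ together with its holomorphic parametrization: this rests on the preceding lemma (itself built on primary decomposition and the Artin--Tate-based dimension analysis) combined with the classical normalization theorem for one-dimensional irreducible germs. Once these two inputs are marshalled, the several-variable statement collapses into the one-variable situation already observed in the paper's introductory example.
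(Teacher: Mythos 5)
Your argument is correct and coincides with the paper's own proof: both reduce via the Nullstellensatz to showing $X=V(\mathcal{J}+(f))$ equals $Y=V(\mathcal{J})$, invoke the preceding lemma to extract an irreducible curve $C\subseteq V(\mathcal{J})$ with $f|_{C}\not\equiv 0$ in the contrary case, and then derive the contradiction from the chain rule along a normalization of $C$. The only (immaterial) difference is that you state the Nullstellensatz reduction up front while the paper applies it at the end.
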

\begin{proof}
Let $X=V(\partial_{z_{1}}f,\dots,
\partial_{z_{n}}f,f)$ and $Y=V(\partial_{z_{1}}f,\dots,
\partial_{z_{n}}f)$. If $X\subsetneq Y$, by previous proposition, there exists an irreducible curve $C\subseteq Y$ such that $f|_{C}\not\equiv 0$. Let 
\begin{eqnarray*}
h:(\mathbb{C},0) &\longrightarrow & (C,0)\\
\zeta &\longmapsto & (h_{1}(\zeta),\dots,h_{n}(\zeta))
\end{eqnarray*}
be a local normalisation of $C$. Hence
\[
\frac{d f\circ h(\zeta)}{d\zeta}
=
\sum_{k=1}^{n}
\frac{\partial f}{\partial z_{k}}
\circ h(\zeta)
h_{k}'(\zeta).
\]
Since $h(\mathbb{C},0)\subseteq C$,  $(\partial_{z_{k}}f)\circ h(\zeta)\equiv 0$ for all $1\leqslant k\leqslant n$. Therefore, $\frac{d}{d\zeta}(f\circ h)\equiv 0$, and hence $f\circ h$ is a constant on $C$. Moreover, since $f\circ h(0)=0$,  $f\circ h$ vanishes identically on $C$, which contradicts our hypothesis that $f|_{C}\not\equiv 0$. Therefore, $X=Y$ and by Nullstellensatz, there exists an integer $k$ such that $f^{k}\in \mathcal{I}$ and the proof is complete.
\end{proof}

\subsection{Ideals Generated by Components of Gradients: Effective Aspects}

\subsubsection{}\label{idealgrad-para-1-1} In fact, the exponent $k$ in the previous proposition may be taken to be $k=n=\dim\ \mathbb{C}^{n}$. We summarise some of the details in \cite[p~59]{Demailly-1996}.

\subsubsection{}\label{idealgrad-para-1-2}
\begin{Definition}\label{idealgrad-def-integral closure of I}
Let $\mathcal{I}$ be an ideal in $\mathcal{O}_{\mathbb{C}^{n},0}$. The {\sl integral closure} of $\mathcal{I}$, denoted by $\bar{\mathcal{I}}$, is the set of germs $u\in\mathcal{O}_{\mathbb{C}^{n},0}$ such that there exist $d\in\mathbb{N}_{\geqslant 1}$ and $\alpha_{s}\in\mathcal{I}^{s}$ for $1\leqslant s\leqslant d$ with: 
\[
u^{d}+a_{1}u^{d-1}
+
\dots
+
a_{d}=0.
\]
\end{Definition}

\begin{Definition}\label{idealgrad-def-I^(k)}
Let $\mathcal{I}=\langle F_{1},\dots,F_{\NN}\rangle$ be an ideal of $\mathcal{O}_{\mathbb{C}^{n},0}$ generated by $\NN$ elements. Let $u\in\mathbb{R}_{+}$. The ideal $\overline{\mathcal{I}}^{(k)}$ is defined by
\[
\overline{\mathcal{I}}^{(k)}
=
\{u\in\mathcal{O}_{\mathbb{C}^{n},0}:\ 
|u|\leqslant C|F|^{k}\}
\]
for some constants $C\geqslant 0$, and where $|F|^{2}=|F_{1}|^{2}+\dots+|F_{\NN}|^{2}$.
\end{Definition}

\subsubsection{}\label{idealgrad-para-1-3} By \cite[p~60, Proposition~12.2]{Demailly-1996}, for every $k$, $l$ positive real numbers, $\mathcal{I}^{(k)}\cdot \mathcal{I}^{(l)}\subseteq \mathcal{I}^{(k+l)}$. Moreover, $\overline{\mathcal{I}}^{(1)}=
\overline{\mathcal{I}}$ which is the integral closure of the ideal $\mathcal{I}$ (\cite[p~61, Corollary~12.5]{Demailly-1996}).

\subsubsection{}\label{idealgrad-para-1-4} By the Brian\c{c}on-Skoda theorem (1974), if $p=\min\{n-1,\NN-1\}$, then $\overline{\mathcal{I}}^{(k+p)}\subseteq
\mathcal{I}^{k}$ for all $k\in\mathbb{N}$.

\subsubsection{}\label{idealgrad-para-1-5} Let $f\in\mathcal{O}_{\mathbb{C}^{n},0}$ be a holomorphic function germ at the origin, and let $\mathcal{I}_{f}$ and $J(f)$ denote the following ideals:
\begin{eqnarray*}
\mathcal{I}_{f}
&:=&
\left\langle
z_{1}\frac{\partial f}{\partial z_{1}},\dots,
z_{n}\frac{\partial f}{\partial z_{n}}
\right\rangle,\\
J(f)
&:=&
\left\langle
\frac{\partial f}{\partial z_{1}},\dots,
\frac{\partial f}{\partial z_{n}}
\right\rangle.
\end{eqnarray*}
It is evident that $\mathcal{I}_{f}\subset J(f)$. Moreover, one has $f\in\overline{\mathcal{I}_{f}}=
\mathcal{I}^{(1)}$ (\cite[p~62, Corollary~12.6]{Demailly-1996}). Therefore, by Paragraph \ref{idealgrad-para-1-3}, $f^{k+n-1}\in\mathcal{I}^{(k+n-1)}$. By Brian\c{c}on-Skoda theorem, for all $k\in\mathbb{N}_{\geqslant 1}$
\[
f^{k+n-1}
\in 
\mathcal{I}_{f}^{k}.
\]
By setting $k=1$, $f^{n}\in J(f)$. This completes the proof of the following proposition:

\begin{Proposition}\label{idealgrad-prop-f^n lies in the gradient}
Let $f\in\mathcal{O}_{\mathbb{C}^{n},0}$ with $f(0)=0$. Then
\[
f^{n}\in 
\left\langle
\frac{\partial f}{\partial z_{1}},
\dots,
\frac{\partial f}{\partial z_{n}}
\right\rangle.
\]
\end{Proposition}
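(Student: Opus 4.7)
The plan is to imitate the Briançon--Skoda strategy outlined in Paragraph~\ref{idealgrad-para-1-5}, but lay out every step explicitly so that the final inclusion $f^{n}\in J(f)$ drops out. Introduce the two ideals
\[
\mathcal{I}_{f}
=
\Bigl\langle z_{1}\tfrac{\partial f}{\partial z_{1}},\dots,z_{n}\tfrac{\partial f}{\partial z_{n}}\Bigr\rangle,
\qquad
J(f)
=
\Bigl\langle \tfrac{\partial f}{\partial z_{1}},\dots,\tfrac{\partial f}{\partial z_{n}}\Bigr\rangle,
\]
and observe the trivial inclusion $\mathcal{I}_{f}\subseteq J(f)$. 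The whole game is to prove $f^{n}\in\mathcal{I}_{f}$, since the conclusion then follows from this inclusion.

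The first step is to show $f\in\overline{\mathcal{I}_{f}}^{(1)}$, which is exactly the content of Corollary~12.6 of \cite{Demailly-1996} already cited in Paragraph~\ref{idealgrad-para-1-5}. Concretely this amounts to the local estimate
\[
|f(z)|
\;\leqslant\;
C\Bigl(\sum_{i=1}^{n}\bigl|z_{i}\tfrac{\partial f}{\partial z_{i}}(z)\bigr|^{2}\Bigr)^{1/2},
\]
valid in a neighbourhood of the origin. The expected source of difficulty is precisely this integral-closure bound: one does not get it from a naive Taylor-with-integral remainder argument (which would only give the wrong bound involving values of $\partial_{z_{i}}f$ at $tz$ rather than at $z$), and the clean proof goes through the equivalence between the analytic definition of $\overline{\mathcal{I}}^{(1)}$ and the algebraic integral closure together with the fact that $f$ satisfies an Euler-type integral relation. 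I would simply invoke Demailly's corollary and move on.

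The second step is bookkeeping. Multiplying the membership $f\in\overline{\mathcal{I}_{f}}^{(1)}$ by itself $n$ times and iterating the inclusion
\[
\overline{\mathcal{I}_{f}}^{(k)}\cdot\overline{\mathcal{I}_{f}}^{(l)}
\subseteq
\overline{\mathcal{I}_{f}}^{(k+l)}
\]
from Paragraph~\ref{idealgrad-para-1-3} yields $f^{n}\in\overline{\mathcal{I}_{f}}^{(n)}$. This step is purely formal.

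The third and final step applies the Briançon--Skoda theorem quoted in Paragraph~\ref{idealgrad-para-1-4}. The ideal $\mathcal{I}_{f}$ is generated by $\NN=n$ elements, so the parameter $p=\min\{n-1,\NN-1\}=n-1$. Taking $k=1$ in Briançon--Skoda gives
\[
\overline{\mathcal{I}_{f}}^{(n)}
=
\overline{\mathcal{I}_{f}}^{(1+(n-1))}
\subseteq
\mathcal{I}_{f}^{1}
=
\mathcal{I}_{f}.
\]
Combining with Step~2 produces $f^{n}\in\mathcal{I}_{f}\subseteq J(f)$, which is the desired statement. The entire argument is thus a three-line deduction from Demailly's Corollary~12.6 and the Briançon--Skoda theorem; the only non-trivial ingredient is the first step, and that is precisely the one we import as a black box.
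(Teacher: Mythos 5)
Your proposal is correct and follows exactly the same route as the paper: invoke Demailly's Corollary~12.6 to get $f\in\overline{\mathcal{I}_{f}}^{(1)}$, use the multiplicativity $\overline{\mathcal{I}_{f}}^{(k)}\cdot\overline{\mathcal{I}_{f}}^{(l)}\subseteq\overline{\mathcal{I}_{f}}^{(k+l)}$ to obtain $f^{n}\in\overline{\mathcal{I}_{f}}^{(n)}$, and conclude with Brian\c{c}on--Skoda for $p=n-1$, $k=1$, giving $f^{n}\in\mathcal{I}_{f}\subseteq J(f)$. No gaps; this is the paper's argument spelled out with slightly more care about the integral-closure step.
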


\subsection{Application}

\subsubsection{}\label{idealgrad-para-2-1} Let $F_{1}$,\dots,$F_{\NN}$ be holomorphic function germs in $\mathcal{O}_{\mathbb{C}^{n},0}$ such that intersection multiplicity of the ideal $\langle F_{1},\dots, F_{\NN}\rangle$ is finite with data $(p,q,s)$. We will show that the ideal
\[
\left\langle 
\frac{\partial F_{i}}{\partial z_{j}}:\ 
1\leqslant i\leqslant \NN,\ 1\leqslant j\leqslant n
\right\rangle
\]
has an effectively bounded intersection multiplicity. More precisely,

\begin{Proposition}\label{idealgrad-prop-ideal of gradient has finite intersection multiplicity}
For any $n\in\mathbb{N}_{\geqslant 1}$,let $F_{1}$,\dots,$F_{\NN}$ be holomorphic function germs in $\mathcal{O}_{\mathbb{C}^{n},0}$ vanishing at the origin such that the ideal $\langle F_{1},\dots,F_{\NN}\rangle$ has finite intersection multiplicity with data $(p,q,s)$. Then 
\[
\dim_{\mathbb{C}}\ 
\mathcal{O}_{\mathbb{C}^{n},0}
\bigg/
\left\langle
\frac{\partial F_{i}}{\partial z_{j}}
:\ 
1\leqslant i\leqslant \NN,\ 
1\leqslant j\leqslant n
\right\rangle
\leqslant
\left(
\begin{matrix}
(n^{2}+2n)s+n-1\\
(n^{2}+2n)s-1
\end{matrix}
\right).
\]
\end{Proposition}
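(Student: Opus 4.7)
The strategy is to find, via Skoda's theorem, an effective exponent $Q$ such that $\mathfrak{m}^{Q}$ lies inside the gradient ideal $J := \bigl\langle \partial F_{i}/\partial z_{j}\bigr\rangle$, and then to read off the codimension bound from the combinatorial inequality of Proposition~\ref{LAG-prop-Relations between the intersection invariants}. The whole argument is a two-step sandwich between the norm inequality for $\mathcal{I}_{F}$ and the norm inequality for $J$, linked by the Briançon--Skoda-type statement of Proposition~\ref{idealgrad-prop-f^n lies in the gradient}.

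\textbf{Step 1: transfer the norm estimate from $\mathcal{I}_{F}$ to $J$.} By hypothesis and Theorem~\ref{LAG-thm-characterisations of complete intersection 1}(iv), near the origin one has $|z|^{p}\lesssim \sum_{i=1}^{\NN}|F_{i}|$. Raising to the $n$-th power and using the elementary convexity estimate $(\sum a_{i})^{n}\lesssim \sum a_{i}^{n}$ (the constant depends only on the fixed integer $\NN$), one gets
\[
|z|^{np}\;\lesssim\;\sum_{i=1}^{\NN}|F_{i}|^{n}.
\]
By Proposition~\ref{idealgrad-prop-f^n lies in the gradient} applied to each $F_{i}$ (which vanishes at $0$), there exist holomorphic germs $h_{ij}\in\mathcal{O}_{\mathbb{C}^{n},0}$ with $F_{i}^{n}=\sum_{j=1}^{n}h_{ij}\,\partial_{z_{j}}F_{i}$. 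On a small neighbourhood of $0$ the $h_{ij}$ are bounded, so
\[
|F_{i}^{n}|\;\lesssim\;\sum_{j=1}^{n}\Bigl|\tfrac{\partial F_{i}}{\partial z_{j}}\Bigr|,
\qquad\text{and hence}\qquad
|z|^{np}\;\lesssim\;\sum_{i,j}\Bigl|\tfrac{\partial F_{i}}{\partial z_{j}}\Bigr|.
\]

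\textbf{Step 2: apply Skoda and the combinatorial bound.} Apply Corollary~\ref{LAG-cor-of Henri Skoda} to the ideal $J$ with the exponent $np$ in place of $p$. This yields
\[
\mathfrak{m}^{(n+2)\,np}\;\subseteq\;J.
\]
Consequently $J$ itself has finite intersection multiplicity in the sense of Section~\ref{LAG-para-2-2}, with invariant $q(J)\leqslant n(n+2)\,p$. Since $p\leqslant s$ by Proposition~\ref{LAG-prop-Relations between the intersection invariants}(ii) (combined with $p\leqslant q\leqslant s$), one has $q(J)\leqslant n(n+2)\,s=(n^{2}+2n)\,s$. Finally, the inequality $s(J)\leqslant \binom{n+q(J)-1}{q(J)-1}$ of Proposition~\ref{LAG-prop-Relations between the intersection invariants}(i), together with the monotonicity of $q\mapsto \binom{n+q-1}{q-1}$, gives
\[
\dim_{\mathbb{C}}\ \mathcal{O}_{\mathbb{C}^{n},0}\big/J
\;\leqslant\;
\binom{n+(n^{2}+2n)s-1}{(n^{2}+2n)s-1},
\]
which is exactly the claimed bound.

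\textbf{Main obstacle.} Every individual ingredient (Skoda, Briançon--Skoda through Proposition~\ref{idealgrad-prop-f^n lies in the gradient}, and the dimension bound of Proposition~\ref{LAG-prop-Relations between the intersection invariants}) is already in place; the only slightly delicate point is the bookkeeping in Step~1, namely the passage from the single-function identity $F_{i}^{n}\in\langle\partial_{z_{j}}F_{i}\rangle$ to the pointwise norm inequality $|F_{i}^{n}|\lesssim\sum_{j}|\partial_{z_{j}}F_{i}|$ on a fixed neighbourhood of $0$, and the convexity step $(\sum|F_{i}|)^{n}\lesssim\sum|F_{i}|^{n}$. Both are elementary, but they must be done uniformly on one common neighbourhood so that Corollary~\ref{LAG-cor-of Henri Skoda} applies with a single set of constants; this is the technical heart of the argument.
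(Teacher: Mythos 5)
Your proof is correct and follows essentially the same route as the paper: the identity $F_{i}^{n}\in\langle\partial_{z_{j}}F_{i}\rangle$ from Proposition~\ref{idealgrad-prop-f^n lies in the gradient}, the Jensen/convexity step giving $|z|^{np}\lesssim\sum_{i}|F_{i}|^{n}$, Skoda's corollary, and the binomial bound of Proposition~\ref{LAG-prop-Relations between the intersection invariants}. The only (harmless) difference is that you convert the ideal membership into a pointwise estimate and apply Corollary~\ref{LAG-cor-of Henri Skoda} directly to the gradient ideal, whereas the paper uses the inclusion $\langle F_{1}^{n},\dots,F_{\NN}^{n}\rangle\subseteq J$ and runs the intersection-data machinery on the intermediate ideal; both yield the same exponent $(n^{2}+2n)s$.
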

\begin{proof}
By Proposition \ref{idealgrad-prop-f^n lies in the gradient}, for each $1\leqslant i\leqslant \NN$,
\[
F_{i}^{n}
\in 
\left\langle
\frac{\partial F_{i}}{\partial z_{1}},
\dots,
\frac{\partial F_{i}}{\partial z_{n}}
\right\rangle.
\]
Evidently,
\[
\langle F_{1}^{n},\dots,F_{\NN}^{n}\rangle
\subseteq
\left\langle
\frac{\partial F_{i}}{\partial z_{j}}:\ 
1\leqslant i\leqslant \NN,\ 
1\leqslant j\leqslant n
\right\rangle.
\]
As a result,
\[
\dim_{\mathbb{C}}\ 
\mathcal{O}_{\mathbb{C}^{n},0}
\bigg/
\left\langle
\frac{\partial F_{i}}{\partial z_{j}}:\ 
1\leqslant i\leqslant \NN,\ 
1\leqslant j\leqslant n
\right\rangle
\leqslant 
\dim_{\mathbb{C}}\ 
\mathcal{O}_{\mathbb{C}^{n},0}
\big/
\langle F_{1}^{n}
,\dots,
F_{\NN}^{n}
\rangle.
\]
It suffices to estimate the term on the right. By the hypothesis that
\[
|z|^{p}\lesssim
\sum_{i=1}^{\NN}|F_{i}|,
\]
Jensen's inequality yields
\begin{eqnarray*}
|z^{np}|
&\lesssim &
\left(\sum_{i=1}^{\NN}|F_{i}|\right)^{n}
\\
&=&
\NN^{n}\left(
\sum_{i=1}^{\NN}
\frac{1}{\NN}|F_{i}|
\right)^{n}\\
&\leqslant &
\NN^{n}\sum_{i=1}^{\NN}\frac{1}{\NN}|F_{i}|^{n}\\
&=&
\NN^{n-1}\sum_{i=1}^{\NN}|F_{i}|^{n}
\lesssim 
\sum_{i=1}^{\NN}|F_{i}^{n}|.
\end{eqnarray*}
By Theorem \ref{LAG-thm-characterisations of complete intersection 1}, the ideal $\langle F_{1}^{n},\dots,F_{\NN}^{n}\rangle$ has finite intersection multiplicity with data $(p',q',s')$. By the definition of $p'$, one has $p'\leqslant np$. At the same time, by Proposition \ref{LAG-prop-Relations between the intersection invariants},
\[
s'\leqslant
\binom{
n+q'+1}{
q'-1}.
\]
Also by Proposition \ref{LAG-prop-Relations between the intersection invariants}, using $p'\leqslant np$, the $q'$ in the inequality above has a bound
\[
q'\leqslant (n+2)p'\leqslant (n+2)np\leqslant (n+2)nq\leqslant (n+2)ns.
\]
Hence
\[
s'\leqslant
\binom{
n+(n+2)ns-1}{
(n+2)ns-1}
\]
and the proof is complete.
\end{proof}
\subsubsection{Remark}\label{idealgrad-para-2-2} The estimate can be made more precise in $\NN=n$. In this case by repeated application of Proposition \ref{LAG-prop-intersectionnumberofproductfg=sumofintersection}
,
\[
\dim_{\mathbb{C}}\ 
\mathcal{O}_{\mathbb{C}^{n},0}
\big/
\langle F_{1}^{n},\dots, F_{n}^{n}\rangle
=
n^{n}\dim_{\mathbb{C}}\ 
\mathcal{O}_{\mathbb{C}^{n},0}
\big/
\langle F_{1},\dots,F_{n}\rangle
=
n^{n}s.
\]
Therefore,
\[
\dim_{\mathbb{C}}\ 
\mathcal{O}_{\mathbb{C}^{n},0}
\bigg/
\left\langle
\frac{\partial F_{i}}{\partial z_{j}}:\ 
1\leqslant i,j\leqslant n
\right\rangle
\leqslant n^{n}s.
\]

\section{Multiplicity of an Ideal}

\subsubsection{}\label{multofideal-ss1} Following \cite{Chirka-1989}, we will present the notion of  multiplicity of an ideal of holomorphic functions defining a pure dimensional variety. Let $F\in\mathcal{O}_{\mathbb{C}^{n},0}$ be a holomorphic function germ. We may write $F$ as an infinite sum of homogeneous polynomials
\[
F=\sum_{k=m}^{\infty}F_{k},
\]
where each $F_{k}$ is of degree $k$, with $F_{m}\not\equiv 0$.The multiplicity of $F$ at $0$ is then equal to $m$.

\subsubsection{}\label{multofideal-ss2} Another way to characterise multiplicity is to look at the order of vanishing of $F$ along generic lines. Indeed, let
\begin{eqnarray*}
\varphi:\mathbb{C} 
&\longrightarrow & 
\mathbb{C}^{n}\\
\zeta &\longmapsto & (c_{1}\zeta,\dots,c_{n}\zeta)
\end{eqnarray*}
be a parametrisation of a line. Composing $\varphi$ with $F$ gives
\[
F\circ\varphi
=
\sum_{k=m}^{\infty}
F_{k}(c_{1}\zeta,\dots,c_{n}\zeta)
=
\sum_{k=m}^{\infty}
\zeta^{k}F_{k}(c_{1},\dots,c_{n}).
\]
Therefore, any vector $(c_{1},\dots,c_{n})$ satisfying 
\[
F_{m}(c_{1},\dots,c_{n})\neq 0
\]
will imply that $\ord_{0}\ F\circ \varphi=m=
\mult_{0}\ F$. Since every line is a complete intersection of $n-1$ hyperplanes $H_{1}$,\dots,$H_{n-1}$,  the multiplicity of $F$ is the intersection multiplicity of $V(F)=\{z\in\mathbb{C}^{n}:F(z)=0\}$ with $n-1$ generic hyperplanes. In other words, if  $H_{i}=\{z\in\mathbb{C}^{n}:L_{i}=0\}$ for some linear function $L_{i}$, then
\[
\mult_{0}F
=
\dim_{\mathbb{C}}
\mathcal{O}_{\mathbb{C}^{n},0}
\big/
\langle 
F,L_{1},\dots,L_{n-1}
\rangle.
\]

\subsubsection{}\label{multofideal-ss3} More generally at $0$, for $1\leqslant q\leqslant n-1$, let $\mathcal{I}_{F}=\langle F_{1},\dots,F_{q}\rangle$ be the ideal generated by $q$ holomorphic function germs and assume that it forms a {\sl regular sequence}\footnote{Let $R$ be a local ring. A sequence of non-units $f_{1}$,\dots,$f_{k}$ is called a regular sequence if for all $1\leqslant i\leqslant k$, the class $f_{i}$ is not a  zero divisor of $R/\langle f_{1},\dots,f_{i-1}\rangle$}. We would like to find a positive integer $m$ analogous to the multiplicity of a function such that for $(n-q)$ generic hyperplanes,
\[
m=
\dim_{\mathbb{C}}
\mathcal{O}_{\mathbb{C}^{n},0}
\big/
\langle F_{1},\dots,F_{q},L_{1},\dots,L_{n-q}\rangle.
\]
The important point about $F_{1}$,\dots,$F_{q}$ being a regular sequence is that the variety $V(\mathcal{I}_{F})$ defined by the ideal $\mathcal{I}_{F}$ is of {\sl pure} dimension $n-q$, due to the property of {\sl Cohen-Macaulayness}. This allows us to apply the results in \cite[Chapter~2]{Chirka-1989}.

\begin{Definition}[Tangent Cones, {\cite[p.~79]{Chirka-1989}}]
\label{multofideal-def: tangent cones}
Let $E$ be an arbitrary set in $\mathbb{R}^{n}$. A vector $v\in\mathbb{R}^{n}$ is called {\sl tangent} to $E$ at a point of the closure $a\in \bar{E}$ if there exist a sequence of points $a_{j}\in E$ and positive numbers $t_{j}>0$ such that $a_{j}\rightarrow a$ and 
\[
t_{j}(a_{j}-a)\rightarrow v
\eqno
{\scriptstyle{(j\,\rightarrow\,\infty}).}
\]
The set of all such tangent vectors at $a$ is denoted by $C(E,a)$, and is called the {\sl tangent cone to $E$ at the point $a$}.
\end{Definition}

\subsubsection{}\label{multofideal-ss4} The set $C(E,a)$ is a cone with vertex $0$: if $v\in C(E,a)$, then  the vectors $tv$ lie in $C(E,a)$ for all $t>0$. Geometrically, the cone is a set of limit positions of secants of $E$ passing through $a$.

\subsubsection{}\label{multofideal-ss5} If $V$ is a pure one-dimensional analytic set in $\mathbb{C}^{n}$, the tangent cone at any $a\in V$ is a finite union of complex lines (\cite[p.~80, Corollary]{Chirka-1989}).

\subsubsection{}\label{multofideal-ss6} In general, if $0\in V$ is a pure analytic subset of a domain $D$ in $\mathbb{C}^{n}$, then $C(V,0)$ is a pure $p$-dimensional algebraic set in $\mathbb{C}^{n}$ (c.f. \cite[p.~83, Corollary]{Chirka-1989}).

\subsubsection{}\label{multofideal-ss7} We recall that if a $n-p$-dimensional variety $V$ is defined by $p$ holomorphic functions $F_{1}$,\dots, $F_{p}$ which form a regular sequence, then $V$ is a pure $(n-p)$-dimensional analytic variety.

\subsection{Multiplicities of Analytic Sets}

We refer the readers to \cite[p.~120]{Chirka-1989} for more details.

\subsubsection{}\label{multofideal-ss8} Let $V$ be a pure $p$ dimensional analytic set in $\mathbb{C}^{n}$, and let $a\in V$. Let $L$ be an $(n-p)$-dimensional complex subspace in $\mathbb{C}^{n}$, such that $a$ is an isolated point of the set $V\cap (a+L)$. Then there is a domain $U\ni a$ in $\mathbb{C}^{n}$ of the form $U=U'\times U''\subseteq \mathbb{C}^{n-p}\times\mathbb{C}^{p}$ such that $V\cap U\cap (a+L)=\{a\}$, and the projection
\[
\pi_{L}:V\cap U\rightarrow U'\subseteq L^{\perp}
\]
along $L$ is a ramified $k$-sheeted  analytic cover. This number $k$ is the multiplicity of the projection $\pi_{L}|_{V}$ at $a$, denoted by $\mu_{a}(\pi_{L}|_{V})$.

\subsubsection{}\label{multofideal-ss9} For simplicity, suppose that $0\in V$ and $a=0$ in the previous paragraph, the multiplicity of intersection of $V$ with $L$ is $\mu_{0}(\pi_{L}|_{V})$. See \cite[p~139, Corollary]{Chirka-1989} and \cite[p~140, Proposition~1]{Chirka-1989}.

\begin{Definition}[Multiplicity of an Analytic Set at a point]
\label{multofideal-def: multiplicity of an analytic set at a point}
Let $V$ be a pure $p$-dimensional analytic set in $\mathbb{C}^{n}$ and let $a\in V$. For every $(n-p)$-dimensional plane $L$ which contains the origin such that 
\[
V\cap (a+L)=\{a\},
\]
the multiplicity of the projection $\mu_{a}(\pi_{L}|_{V})$ is finite. The multiplicity of $V$ at $a$ is given by 
\[
\mu_{a}(V)
:=
\min\{\mu_{a}(\pi_{L}|_{V}):
L\in G(n-p,n)\}.
\]
\end{Definition}

\begin{Example}
Suppose $V=\{F=0\}$ is a principal analytic set in a neighbourhood of $0\in\mathbb{C}^{n}$, and $F$ is the minimum defining function for $V$. Write $F$
\[
F=\sum_{k=\ordsmall_{0}F}^{\infty}F_{k}
\]
as a sum of homogeneous polynomials $F_{k}$ of degree $k$. Then by \cite[p.~83, Proposition~1]{Chirka-1989},
\[
C(V,0)=\{F_{\ordsmall_{0}F}=0\}.
\]
For any complex {\sl line} $L$ containing $0$, by \cite[p.~121, Proposition~1]{Chirka-1989},
\[
\mu_{0}(\pi_{L}|_{V})=
\ord_{0}F|_{L}
\geqslant 
\ord_{0}F,
\]
with equality if and only if $L\cap C(V,0)=\{0\}$. In other words, if 
\[
\zeta \mapsto (c_{1}\zeta,\dots, c_{n}\zeta)
\]
is a parametrisation of the line $L$, then the line has trivial intersection with $C(V,0)$ if and only if 
\[
F_{\ordsmall_{0}F}(c_{1},\dots,c_{n})\neq 0.
\]
This agrees with our intuition in paragraph \ref{multofideal-ss2}.
\end{Example}

\subsubsection{}\label{multofideal-ss10} More generally, 

\begin{Proposition}\label{multofideal-prop-critieriaforequality}
Let $V$ be a pure $p$-dimensional analytic set in a neighbourhood of $0\in\mathbb{C}^{n}$, and let $L\in G(n-p,n)$. The equality $\mu_{0}(\pi_{L}|_{V})=\mu_{0}$ holds if and only if the plane $L$ is transversal to $V$ at $0$. In other words,
\[L\cap C(V,0)=\{0\}.\]
\end{Proposition}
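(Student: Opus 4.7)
My plan is to identify both $\mu_0(\pi_L|_V)$ and $\mu_0(V)$ in terms of the tangent cone $C := C(V,0)$, which by~\ref{multofideal-ss6} is a pure $p$-dimensional algebraic subvariety of $\mathbb{C}^n$. The underlying fact from Chirka (the Corollary on p.~139 and Proposition~1 on p.~140 invoked in~\ref{multofideal-ss9}) is that $\mu_0(\pi_L|_V)$ coincides with the local intersection multiplicity $i_0(V,L)$ of the germs at $0$, and that the generic (hence minimum) value $\mu_0(V)$ equals the degree of $C$ as an algebraic cone in $\mathbb{C}^n$.

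For the direction ``$L \cap C = \{0\}$ $\Rightarrow$ equality'', I would use the one-parameter dilation family $V_t := t^{-1}\!\cdot V$ for $t > 0$, which degenerates to $C$ as $t \to 0^+$ in the Hausdorff sense by the very definition of the tangent cone. Since each dilation is a biholomorphism commuting with the linear projection $\pi_L$, one has $\mu_0(\pi_L|_{V_t}) = \mu_0(\pi_L|_V)$ for all $t > 0$. The transversality hypothesis $L \cap C = \{0\}$ ensures that $\pi_L|_{V_t}$ remains a proper finite covering on a fixed neighbourhood of $0$ uniformly in $t$; passing to the limit yields $\mu_0(\pi_L|_V) = \mu_0(\pi_L|_C)$. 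Because $C$ is a cone and $L$ is transversal to it, the latter equals $\deg C = \mu_0(V)$. This directly generalises the hypersurface computation in the Example preceding~\ref{multofideal-ss10}.

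For the converse, assume that $L \cap C$ contains a nonzero vector $v$. By Definition~\ref{multofideal-def: tangent cones} there is a sequence $a_j \in V \setminus \{0\}$ with $a_j \to 0$ and scalars $t_j > 0$ such that $t_j a_j \to v \in L \setminus \{0\}$. Consequently, the unit directions $a_j/|a_j|$ accumulate at a nonzero vector in $L$, so $|\pi_{L^{\perp}}(a_j)|/|a_j| \to 0$. This forces the projection $\pi_L|_V$ to pick up points of $V$ lying almost parallel to $L$; translated back into intersection theory, $i_0(V,L) > \deg C$, and therefore $\mu_0(\pi_L|_V) > \mu_0(V)$, contradicting equality.

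The main technical obstacle lies in the first direction, where one must rigorously justify the continuity of the local covering multiplicity under the degeneration $V_t \rightsquigarrow C$. The transversality hypothesis is used precisely to guarantee that no fibre mass of $V_t$ escapes out of a fixed compact ball as $t \to 0^{+}$, allowing the identity $\mu_0(\pi_L|_V) = \deg C$ to be read off in the limit. In the analytic category this is standard (cf.\ \cite[Ch.~2]{Chirka-1989}), but some care is needed to keep track of the multiplicity count concentrated at the origin.
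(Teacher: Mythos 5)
The paper does not actually prove this proposition: its ``proof'' is the single line ``See \cite[p.~122, Proposition~2]{Chirka-1989}'', so there is no in-paper argument to measure yours against. Your strategy --- identify $\mu_{0}(V)$ with the degree of the tangent cone $C(V,0)$ and analyse $\pi_{L}$ through the dilation degeneration $V_{t}=t^{-1}V\rightsquigarrow C(V,0)$ --- is essentially the route taken in Chirka's text, so the plan is the right one.

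As a proof, however, it has two genuine gaps. In the forward direction, the step you yourself flag as the ``main technical obstacle'' --- that the sheet number of $\pi_{L}|_{V_{t}}$ over a fixed polydisc persists down to $t=0$ and equals $\mu_{0}(\pi_{L}|_{C})$ --- is the entire content of the proposition; until you prove that the transversality $L\cap C(V,0)=\{0\}$ gives uniform properness of $\pi_{L}|_{V_{t}}$ on a fixed bidisc and that the covering number is continuous under the Hausdorff convergence $V_{t}\to C$, nothing has been established. You also need the tangent cone \emph{with multiplicities} here: for $V=\{y^{2}=x^{5}\}$ the set-theoretic cone is the line $\{y=0\}$ of degree $1$ while $\mu_{0}(V)=2$, so ``$\deg C$'' must mean the degree of the tangent cone as a cycle, and the identity $\mu_{0}(\pi_{L}|_{C})=\deg C$ for a transversal $L$ must be read accordingly. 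The converse direction is a heuristic rather than a proof: from a sequence $a_{j}\in V$ whose directions accumulate in $L$ you assert ``translated back into intersection theory, $i_{0}(V,L)>\deg C$'', but that inference is precisely what has to be demonstrated --- note that $V\cap L=\{0\}$ can perfectly well hold while $L\cap C(V,0)\neq\{0\}$ (take $V=\{y=x^{2}\}$ and $L$ the $x$-axis, where $\mu_{0}(\pi_{L}|_{V})=2>1=\mu_{0}(V)$), so the excess multiplicity is invisible at the set-theoretic level and must be extracted quantitatively, e.g.\ by showing $\mu_{0}(\pi_{L}|_{V})\geqslant\deg C(V,0)$ for every admissible $L$ with strict inequality whenever $\pi_{L}$ fails to be finite on $C(V,0)$. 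Either supply these two arguments or, as the paper does, cite Chirka for the whole statement.
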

\begin{proof}
See \cite[p.~122, Proposition~2]{Chirka-1989}.
\end{proof}

\subsubsection{}\label{multofideal-ss11} Combining paragraphs \ref{multofideal-ss6}, \ref{multofideal-ss7}, \ref{multofideal-ss9}, and Proposition \ref{multofideal-prop-critieriaforequality}, we obtain

\begin{Proposition}\label{multofideal-prop-multiplicity-of-an-ideal}
Let $F_{1}$,\dots, $F_{p}$ be holomorphic function germs at the origin so that $F_{1}(0)=\cdots=F_{p}(0)=0$. Suppose that the sequence $F_{1}$,\dots,$F_{p}$ is regular so that the variety defined by the intersection $V:=V(F_{1},\dots,F_{p})$ is a pure $(n-p)$-dimensional analytic variety. Then there exists an integer $\mu_{0}(V)$ such that for a generic choice of $p$ hyperplanes given by the zeros of $n-p$ linear functions $L_{1},\dots,L_{n-p}$, one has
\[
\dim_{\mathbb{C}}
\mathcal{O}_{\mathbb{C}^{n},0}
\big/
\langle F_{1},\dots,F_{p},L_{1},\dots,L_{n-p}\rangle
=
\mu_{0}(A).
\]
\end{Proposition}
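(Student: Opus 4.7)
The plan is to combine the intrinsic characterisation of the multiplicity of a pure-dimensional analytic set (Definition \ref{multofideal-def: multiplicity of an analytic set at a point} and Proposition \ref{multofideal-prop-critieriaforequality}) with the algebraic interpretation of the sheet number of a ramified cover (Theorem \ref{LAG-thm-characterisations of complete intersections 2}). Because $F_{1},\dots,F_{p}$ forms a regular sequence, paragraph \ref{multofideal-ss7} ensures that $V:=V(F_{1},\dots,F_{p})$ is a pure $(n-p)$-dimensional analytic variety, so Definition \ref{multofideal-def: multiplicity of an analytic set at a point} furnishes the intrinsic integer
\[
\mu_{0}(V)=\min\{\mu_{0}(\pi_{L}|_{V}):L\in G(p,n),\ L\cap V=\{0\}\},
\]
where $\pi_{L}:\C^{n}\to L^{\perp}$ is projection along $L$.

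Next, I would argue that this minimum is attained generically. By paragraph \ref{multofideal-ss6}, the tangent cone $C(V,0)\subset\C^{n}$ is a pure $(n-p)$-dimensional algebraic subset, so a standard Grassmannian dimension count shows that the condition $L\cap C(V,0)=\{0\}$ cuts out a non-empty Zariski open subset of $G(p,n)$. Parametrising a $p$-plane $L$ by the common zero set of $(n-p)$ linear forms $L_{1},\dots,L_{n-p}$ and discarding the closed locus where the $L_{i}$ fail to be linearly independent, one obtains a non-empty Zariski open set of $(n-p)$-tuples $(L_{1},\dots,L_{n-p})$ for which the resulting $L$ is transversal to $V$ at $0$. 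For every such choice, Proposition \ref{multofideal-prop-critieriaforequality} gives $\mu_{0}(\pi_{L}|_{V})=\mu_{0}(V)$.

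It then remains to identify $\mu_{0}(\pi_{L}|_{V})$ with $\dim_{\C}\mathcal{O}_{\C^{n},0}\big/\langle F_{1},\dots,F_{p},L_{1},\dots,L_{n-p}\rangle$, and this is the main obstacle. I would consider the holomorphic map germ
\[
\Phi=(F_{1},\dots,F_{p},L_{1},\dots,L_{n-p}):(\C^{n},0)\longrightarrow(\C^{n},0).
\]
Since transversality forces $V\cap L=\{0\}$ as germs, the fibre $\Phi^{-1}(0)$ is isolated at $0$; Theorem \ref{LAG-thm-characterisations of complete intersection 1} then gives
\[
s':=\dim_{\C}\mathcal{O}_{\C^{n},0}\big/\langle F_{1},\dots,F_{p},L_{1},\dots,L_{n-p}\rangle<\infty,
\]
and Theorem \ref{LAG-thm-characterisations of complete intersections 2} identifies $\Phi$ as a ramified $s'$-sheeted cover. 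On the other hand, after shrinking the neighbourhood as in paragraph \ref{multofideal-ss8}, $\pi_{L}|_{V}:V\cap U\to U'\subseteq L^{\perp}$ is a ramified $\mu_{0}(\pi_{L}|_{V})$-sheeted cover.

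The hard part will be to reconcile these two sheet counts. My plan is to observe that a generic fibre $\Phi^{-1}(w,v)$, for small $(w,v)\in\C^{p}\times\C^{n-p}$, is exactly the intersection of the perturbed variety $V_{w}=\{F_{i}=w_{i}\}$ with the affine translate $L_{v}=\{L_{j}=v_{j}\}$ of $L$. Such a generic fibre has $s'$ points (since $\Phi$ is an $s'$-sheeted cover) and simultaneously $\mu_{0}(\pi_{L}|_{V})$ points (by the very definition of projection multiplicity, applied after shrinking so that $V_{w}\cap L_{v}$ stays in the domain of the cover). Hence $s'=\mu_{0}(\pi_{L}|_{V})=\mu_{0}(V)$, which is the required equality. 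Alternatively, one may appeal to the residue formula in Theorem \ref{LAG-thm-characterisations of complete intersections 2}(iii), which expresses $s'$ directly as the local intersection index of $V$ with $L$ at $0$, yielding the same identification.
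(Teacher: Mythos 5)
Your first three steps --- pure-dimensionality of $V$ from the regular-sequence hypothesis (paragraph \ref{multofideal-ss7}), genericity of the transversality condition $L\cap C(V,0)=\{0\}$ because $C(V,0)$ is a pure $(n-p)$-dimensional algebraic cone (paragraph \ref{multofideal-ss6}), and the equality $\mu_{0}(\pi_{L}|_{V})=\mu_{0}(V)$ for transversal $L$ via Proposition \ref{multofideal-prop-critieriaforequality} --- are exactly the ingredients the paper combines; the paper offers no further argument and delegates the remaining identification of the sheet number with the algebraic intersection number to paragraph \ref{multofideal-ss9}, i.e.\ to Chirka's local intersection index. So the skeleton is the same; the difference is that you try to prove that last identification yourself.

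That is where there is a genuine gap. The sheet number $\mu_{0}(\pi_{L}|_{V})$ counts the distinct points of a generic fibre of $\pi_{L}|_{V}$, and such a fibre is $V\cap L_{v}=\Phi^{-1}(0,v)$: the first $p$ coordinates are pinned at $0$. A generic fibre of $\Phi$ is $\Phi^{-1}(w,v)$ with $(w,v)$ generic in $\mathbb{C}^{p}\times\mathbb{C}^{n-p}$, and nothing guarantees that the slice $\{w=0\}$ avoids the branch locus of $\Phi$; ``the very definition of projection multiplicity'' applies to $\pi_{L}|_{V}$, not to $\pi_{L}|_{V_{w}}$ for $w\neq 0$. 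The two counts genuinely differ when $\langle F_{1},\dots,F_{p}\rangle$ is not radical: take $n=2$, $p=1$, $F_{1}=z_{1}^{2}$, $L_{1}=z_{2}$. This is a regular sequence, $V=\{z_{1}=0\}$ is pure $1$-dimensional and projects bijectively along $L=\{z_{2}=0\}$, so $\mu_{0}(\pi_{L}|_{V})=\mu_{0}(V)=1$, yet $s'=\dim_{\mathbb{C}}\mathcal{O}_{\mathbb{C}^{2},0}/\langle z_{1}^{2},z_{2}\rangle=2$. What is true is that $s'$ equals the multiplicity-weighted intersection index $\sum_{k}m_{k}\,\mu_{0}(\pi_{L}|_{Z_{k}})$ over the irreducible components $Z_{k}$, with the $m_{k}$ of Theorem \ref{multofideal-thm: compute intersection numbers by normalisation}; that weighted count is the content of the references in paragraph \ref{multofideal-ss9} and is what makes the generic value constant. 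Your residue-formula alternative has the same defect: Theorem \ref{LAG-thm-characterisations of complete intersections 2}(iii) expresses $s'$ as a residue, but equating that residue with the unweighted sheet number of $\pi_{L}|_{V}$ is again false in the example above. The existence of a constant generic value survives, but your chain of equalities ending in $s'=\mu_{0}(V)$ does not, unless you add the hypothesis that the $F_{i}$ generate a radical ideal (or reinterpret $\mu_{0}(V)$ as the multiplicity of the ideal rather than of the underlying set).
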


\subsection{Multiplicity of an Ideal -- Case of a Curve}

\subsubsection{} In this section, we will discuss more in depth of Proposition \ref{multofideal-prop-multiplicity-of-an-ideal} in the case where $p=n-1$. In other words, the ideal $\mathcal{I}_{F}=\langle F_{1},\dots,F_{n-1}\rangle$ forms a regular sequence in $\mathcal{O}_{\mathbb{C}^{n},0}$ so that the variety $V(F_{1},\dots,F_{n-1})$ is a pure 1-dimensional analytic variety, which is a union
\[
V(F_{1},\dots,F_{n-1})
=
\bigcup_{k=1}^{\MM}Z_{k}
\]
of its irreducible components $Z_{k}$.
\subsubsection{} For $1\leqslant k\leqslant \MM$, since each $Z_{k}$ is an irreducible curve, there exists a parametrisation 
\begin{eqnarray*}
n_{k}: (\mathbb{C},0) &\rightarrow & (Z_{k},0)\\
\zeta &\mapsto & 
(\zeta^{\mu_{k1}}a_{k1}(\zeta),\dots,
\zeta^{\mu_{kn}}a_{kn}(\zeta)),
\end{eqnarray*}
where for all $1\leqslant j\leqslant n$, $a_{kj}(0)\neq 0$ (c.f. \cite[p~164, Theorem~4.4.8]{deJong-Pfister-2000}) and \cite[p~165, Theorem~4.4.10]{deJong-Pfister-2000}).

\begin{Theorem}\label{multofideal-thm: compute intersection numbers by normalisation}
There exist positive integers  $m_{1}$,\dots,$m_{\MM}$ such that for any holomorphic function $f$ with
\[
\dim_{\mathbb{C}}
\mathcal{O}_{\mathbb{C}^{n},0}
/
\langle 
F_{1},\dots,F_{n-1},
f
\rangle<\infty,
\]
the equality holds
\[
\dim_{\mathbb{C}}
\mathcal{O}_{\mathbb{C}^{n},0}
/
\langle 
F_{1},\dots,F_{n-1},
f
\rangle
=
\sum_{k=1}^{\MM}m_{k}\ \ord_{0}f\circ n_{k}.
\]
\end{Theorem}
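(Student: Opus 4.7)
The plan is to identify the $m_{k}$ as scheme-theoretic multiplicities attached to the components $Z_{k}$ in the Cartier cycle cut out by the $F_{i}$'s, and then to combine the associativity of length along a Cohen--Macaulay filtration with the classical identification of the intersection multiplicity of a hypersurface with an irreducible curve germ via its normalization. Write $R=\mathcal{O}_{\mathbb{C}^{n},0}$ and $\mathcal{I}=\langle F_{1},\dots,F_{n-1}\rangle$. Because $F_{1},\dots,F_{n-1}$ is a regular sequence, $R/\mathcal{I}$ is Cohen--Macaulay of Krull dimension one and hence has no embedded primes; its associated primes coincide with its minimal primes, which are the $\mathfrak{p}_{k}:=\mathcal{I}(Z_{k},0)$, $1\leqslant k\leqslant \MM$. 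I would then define
\[
m_{k}\;:=\;\mathrm{length}_{R_{\mathfrak{p}_{k}}}\!\bigl((R/\mathcal{I})_{\mathfrak{p}_{k}}\bigr)\,\in\,\mathbb{N}_{\geqslant 1},
\]
a positive integer depending only on the $F_{i}$'s; geometrically it is the multiplicity with which $Z_{k}$ occurs in $V(\mathcal{I})$ regarded as a scheme.

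Next I would produce a prime filtration $0=M_{0}\subset M_{1}\subset\cdots\subset M_{N}=R/\mathcal{I}$ whose successive quotients are cyclic prime quotients $R/\mathfrak{p}_{k(i)}$; here the Cohen--Macaulay hypothesis guarantees that no embedded prime appears and that, after localising at each $\mathfrak{p}_{k}$ and counting lengths, the prime $\mathfrak{p}_{k}$ occurs exactly $m_{k}$ times. Since the finite-dimensionality of $R/\langle\mathcal{I},f\rangle$ forces $f$ to avoid every $\mathfrak{p}_{k}$, its class is a non-zero-divisor on each $R/\mathfrak{p}_{k}$, and the additivity of length applied to the induced exact sequences $0\to M_{i-1}/fM_{i-1}\to M_{i}/fM_{i}\to (M_{i}/M_{i-1})/f(M_{i}/M_{i-1})\to 0$ yields the associativity formula
\[
\dim_{\mathbb{C}} R/\langle\mathcal{I},f\rangle \;=\; \sum_{k=1}^{\MM} m_{k}\,\dim_{\mathbb{C}} R/(\mathfrak{p}_{k}+(f)) \;=\; \sum_{k=1}^{\MM} m_{k}\,\dim_{\mathbb{C}}\mathcal{O}_{Z_{k},0}/(f|_{Z_{k}}).
\]

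For the last step, each irreducible curve germ $Z_{k}$ has local ring $\mathcal{O}_{Z_{k},0}$ a one-dimensional Noetherian local domain whose integral closure is the DVR $\mathbb{C}\{\zeta\}$, attained through the parametrization $n_{k}^{*}:\mathcal{O}_{Z_{k},0}\hookrightarrow\mathbb{C}\{\zeta\}$; both rings share residue field $\mathbb{C}$ and the extension is finite. Applying the snake lemma to the morphism between the sequences $0\to\mathcal{O}_{Z_{k},0}\xrightarrow{\,g\,}\mathcal{O}_{Z_{k},0}\to\mathcal{O}_{Z_{k},0}/(g)\to 0$ and $0\to\mathbb{C}\{\zeta\}\xrightarrow{\,g\,}\mathbb{C}\{\zeta\}\to\mathbb{C}\{\zeta\}/(g)\to 0$, with $g:=f|_{Z_{k}}$ non-zero in both, and using that multiplication by $g$ on the finite-length $\mathcal{O}_{Z_{k},0}$-module $\mathbb{C}\{\zeta\}/\mathcal{O}_{Z_{k},0}$ has kernel and cokernel of equal length, I would conclude that $\dim_{\mathbb{C}}\mathcal{O}_{Z_{k},0}/(g)=\dim_{\mathbb{C}}\mathbb{C}\{\zeta\}/(n_{k}^{*}g)$; and the latter equals $\mathrm{ord}_{0}(f\circ n_{k})$ since $\mathbb{C}\{\zeta\}/(n_{k}^{*}g)\cong\mathbb{C}\{\zeta\}/(\zeta^{\mathrm{ord}_{0}(f\circ n_{k})})$. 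Substituting into the associativity formula proves the theorem.

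The hard part will be the second step: producing the $m_{k}$ as integers that are the same for every admissible $f$ requires careful bookkeeping of the Cohen--Macaulay filtration of $R/\mathcal{I}$, and it is precisely here that the regular-sequence hypothesis on the $F_{i}$'s is essential---without it, embedded primes attached to $R/\mathcal{I}$ would contribute $f$-dependent correction terms that would destroy the linearity of the formula in $\mathrm{ord}_{0}(f\circ n_{k})$. By contrast, the normalization identity in the final step is routine once one knows the standard structure theory of local rings of irreducible curve germs and the finiteness of the normalization.
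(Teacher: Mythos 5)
The paper does not actually prove this theorem; it defers entirely to \cite[p~78, Theorem~3]{DAngelo-Book-1993}, so there is no in-paper argument to compare against. Your proposal supplies the standard commutative-algebra proof, and its architecture is sound: take $m_{k}=\mathrm{length}_{R_{\mathfrak{p}_{k}}}((R/\mathcal{I})_{\mathfrak{p}_{k}})$, reduce to the prime quotients $R/\mathfrak{p}_{k}$ by a filtration, and convert $\dim_{\mathbb{C}}\mathcal{O}_{Z_{k},0}/(f|_{Z_{k}})$ into $\ord_{0}(f\circ n_{k})$ via the finite normalisation $\mathcal{O}_{Z_{k},0}\hookrightarrow\mathbb{C}\{\zeta\}$ (your snake-lemma step there, using that the $\delta$-invariant module $\mathbb{C}\{\zeta\}/\mathcal{O}_{Z_{k},0}$ has finite length, is correct, as is the observation that $f$ is a unit or else lies outside every $\mathfrak{p}_{k}$ by the finite-colength hypothesis).

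The one point that needs repair is your middle step. A prime filtration of the one-dimensional Cohen--Macaulay module $R/\mathcal{I}$ need \emph{not} have all its quotients of the form $R/\mathfrak{p}_{k}$: Cohen--Macaulayness excludes embedded \emph{associated} primes, but the primes occurring in a prime filtration are only constrained to lie in the support, so copies of $R/\mathfrak{m}$ can and generally do appear (e.g.\ already for $R/\mathfrak{p}^{2}$ with $\mathfrak{p}$ a curve). Correspondingly, the sequences $0\to M_{i-1}/fM_{i-1}\to M_{i}/fM_{i}\to (M_{i}/M_{i-1})/f(M_{i}/M_{i-1})\to 0$ are only right exact; naive additivity of $\mathrm{length}(\,\cdot\,/f)$ would overcount by one for each $R/\mathfrak{m}$ factor. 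The fix is routine: work with the Euler characteristic $e(f,M)=\mathrm{length}(M/fM)-\mathrm{length}(0:_{M}f)$, which is additive on short exact sequences, vanishes on finite-length modules such as $R/\mathfrak{m}$, equals $\mathrm{length}(M/fM)$ when $f$ is a non-zero-divisor on $M$ (as it is on $R/\mathcal{I}$ and on each $R/\mathfrak{p}_{k}$), and whose localisation count recovers exactly $m_{k}$ copies of $\mathfrak{p}_{k}$. With that substitution your argument closes, and it is a legitimate self-contained replacement for the black-box citation.
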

\begin{proof}
See \cite[p~78, Theorem~3]{DAngelo-Book-1993} for further discussion.
\end{proof}

\subsubsection{} We begin discussion with a small lemma.

\begin{Lemma}
Let $Z_{k}$ be an irreducible 1-dimensional analytic variety and $n_{k}:(\mathbb{C},0)\rightarrow (Z_{k},0)$ be its normalisation. Let $f$ be a holomorphic function germ vanishing at the origin such that $\ord_{0}f\circ n_{k}$ is finite. Then the intersection $Z_{k}\cap \{f=0\}$ is discrete, and hence the origin is an isolated point in the intersection.
\end{Lemma}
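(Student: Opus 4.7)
The plan is to transfer the question from $Z_k$ to $\mathbb{C}$ via the normalisation $n_k$ and use the fact that a one-variable holomorphic function with finite vanishing order has an isolated zero.

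First I would recall that the normalisation $n_k:(\mathbb{C},0)\to (Z_k,0)$ is a proper, finite, surjective map onto a sufficiently small representative of the germ $(Z_k,0)$: for a small disc $\{|\zeta|<r\}$, its image $n_k(\{|\zeta|<r\})$ is an open neighbourhood of $0$ in $Z_k$, and $n_k^{-1}(0)=\{0\}$. This is precisely the content of the Puiseux-type parametrisation invoked just before the statement (see \cite[Theorem~4.4.8, Theorem~4.4.10]{deJong-Pfister-2000}).

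Next I would consider the holomorphic function $g(\zeta):=f\circ n_k(\zeta)$ on $\{|\zeta|<r\}$. By hypothesis $\ord_0\ g=m<\infty$, so one may factor $g(\zeta)=\zeta^{m}u(\zeta)$ with $u(0)\neq 0$. Shrinking $r$ so that $u$ is nowhere zero on $\{|\zeta|<r\}$, the zero set of $g$ inside $\{|\zeta|<r\}$ reduces to $\{0\}$. Hence $n_k^{-1}(Z_k\cap \{f=0\})\cap\{|\zeta|<r\}=\{0\}$.

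Finally I would conclude by surjectivity: any point $p\in Z_k\cap \{f=0\}$ lying in the neighbourhood $n_k(\{|\zeta|<r\})$ has a preimage $\zeta_0\in\{|\zeta|<r\}$ with $g(\zeta_0)=f(n_k(\zeta_0))=0$, so $\zeta_0=0$, i.e.\ $p=0$. Thus $Z_k\cap \{f=0\}$ is $\{0\}$ in a neighbourhood of the origin, which shows the intersection is discrete and that $0$ is an isolated point. There is no real obstacle here; the only point requiring a moment of care is to ensure that the normalisation is surjective onto a true neighbourhood of $0$ in $Z_k$ (so that an accumulation point of zeros in $Z_k$ would lift to an accumulation point of zeros of $g$), which is a standard property of the normalisation of an irreducible analytic curve.
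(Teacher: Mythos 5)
Your proposal is correct and follows essentially the same route as the paper's proof: both pull back the zero set via the normalisation, factor $f\circ n_k(\zeta)=\zeta^m u(\zeta)$ with $u(0)\neq 0$ using the finiteness of the vanishing order, and conclude via surjectivity of $n_k$ onto a neighbourhood of $0$ in $Z_k$. Your version is slightly more careful than the paper's in making the surjectivity of the normalisation explicit, but the argument is the same.
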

\begin{proof}

There is an equality of sets:
\[
\{f=0\}\cap Z_{k}=
\{n_{k}(\zeta):\ f\circ n_{k}(\zeta)=0\}.
\]
Now the set on the right is just simply $\{0\}$. This is because by hypothesis on the vanishing order of $f\circ n_{k}$,
\[
f\circ n_{k}(\zeta)=
\zeta^{m}g(\zeta),
\]
where $g(0)\neq 0$ and $m=\ord_{0}\ f\circ n_{k}<\infty$. Hence
\[
f\circ n_{k}(\zeta)=0 \implies \zeta=0\implies n_{k}(\zeta)=0.\qedhere
\]
\end{proof}

\begin{Proposition}
\label{multofideal-prop: finite order vanishing implies finite intersection multiplicity}
Let $f$ be the holomorphic function such that for each $1\leqslant k\leqslant \MM$, the vanishing order $\ord_{0}f\circ n_{k}$ is finite. Then the intersection multiplicity of the ideal $\langle F_{1},\dots,F_{n-1},f\rangle$ is finite.
\end{Proposition}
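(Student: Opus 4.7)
The plan is to reduce the assertion to the criterion $V(F_1,\dots,F_{n-1},f) = \{0\}$ from Theorem \ref{LAG-thm-characterisations of complete intersection 1}, and then to use the preceding lemma together with the decomposition of $V(F_1,\dots,F_{n-1})$ into irreducible components.

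First I would write
\[
V(F_1,\dots,F_{n-1},f) \;=\; V(F_1,\dots,F_{n-1}) \cap V(f) \;=\; \bigcup_{k=1}^{\MM}\bigl(Z_k \cap V(f)\bigr)
\]
as germs at $0$, using the given decomposition into the irreducible one-dimensional components $Z_k$. Since germ intersection with a finite union distributes over the union, it suffices to show that each germ $(Z_k \cap V(f), 0)$ is just $\{0\}$.

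Next I would invoke the preceding lemma applied to each $Z_k$: because $\ord_{0}\, f\circ n_{k}$ is finite by hypothesis, the lemma says that the intersection $Z_k \cap \{f = 0\}$ is discrete at the origin, so the germ $(Z_k \cap V(f), 0)$ reduces to the single point $\{0\}$. Taking the union over $k = 1,\dots,\MM$, I conclude that $V(F_1,\dots,F_{n-1},f) = \{0\}$ as a germ at $0$.

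Finally, by the equivalence $\text{(i)} \Longleftrightarrow \text{(iii)}$ in Theorem \ref{LAG-thm-characterisations of complete intersection 1}, the condition $V(F_1,\dots,F_{n-1},f) = \{0\}$ is equivalent to
\[
\dim_{\mathbb{C}}\ \mathcal{O}_{\mathbb{C}^{n},0}\big/ \langle F_1,\dots,F_{n-1}, f\rangle \;<\; \infty,
\]
which is precisely the desired conclusion. The argument is essentially immediate once the preceding lemma is in hand; the only subtle point I anticipate is the passage from a finite union of discrete germs to a discrete germ, but this is automatic from the fact that a finite union of points is a point. No effectivity bound is claimed here---Theorem \ref{multofideal-thm: compute intersection numbers by normalisation} actually gives the explicit formula $\sum_{k=1}^{\MM} m_k\,\ord_0 f\circ n_k$ for the multiplicity, but the present statement only requires finiteness.
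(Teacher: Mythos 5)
Your argument is correct and is essentially identical to the paper's own proof: both decompose $V(F_{1},\dots,F_{n-1},f)$ as $\bigcup_{k}(Z_{k}\cap\{f=0\})$, apply the preceding lemma to reduce each piece to $\{0\}$, and conclude via the equivalence of $V(\mathcal{I})=\{0\}$ with finite intersection multiplicity (which the paper leaves implicit but you correctly make explicit). Nothing to add.
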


\begin{proof}
The previous lemma implies that  
\[
Z_{k}\cap \{f=0\}=\{0\}.
\]
Hence
\[
V(F_{1},\dots,F_{n-1},f)
=
C\cap \{f=0\}
=
\left(\bigcup_{k=1}^{\MM}Z_{k}\right)
\cap \{f=0\}
=
\bigcup_{k=1}^{\MM}(Z_{k}\cap \{f=0\})
=
\{0\}.\qedhere
\]
\end{proof}

\subsubsection{} We are now in a position to prove the following lemma.

\begin{Proposition}\label{multofideal-prop: multiplicity of an ideal for curves-explicit version}
Let $F_{1}$,\dots,$F_{n-1}$ be a regular sequence such that $V(F_{1},\dots,F_{n-1})$ is a pure $1$-dimensional variety. For a generic choice of hyperplane defined by a linear function $L$, 
\[
\dim_{\mathbb{C}}
\mathcal{O}_{\mathbb{C}^{n},0}
/
\langle F_{1},\dots, F_{n-1},L\rangle
=
\sum_{k=1}^{\MM}m_{k}\ 
\min\{\mu_{k1},\dots,\mu_{kn}\}.
\]
\end{Proposition}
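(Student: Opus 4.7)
The plan is to apply Theorem~\ref{multofideal-thm: compute intersection numbers by normalisation} with $f = L$, and to compute $\ord_0(L\circ n_k)$ by unwrapping the definition of the parametrization $n_k$. Write a candidate linear form as
\[
L(z_1,\dots,z_n) = c_1 z_1 + \cdots + c_n z_n.
\]
For each irreducible component $Z_k$, composition with the normalization gives
\[
L\circ n_k(\zeta)
=
\sum_{j=1}^{n} c_j\,\zeta^{\mu_{kj}}\,a_{kj}(\zeta),
\]
with $a_{kj}(0)\neq 0$. Setting $\nu_k := \min\{\mu_{k1},\dots,\mu_{kn}\}$ and letting $J_k := \{j : \mu_{kj} = \nu_k\}$, we may factor $\zeta^{\nu_k}$ out and obtain
\[
L\circ n_k(\zeta)
=
\zeta^{\nu_k}
\Biggl(\sum_{j\in J_k} c_j\,a_{kj}(0) \;+\; \zeta\cdot(\text{holomorphic})\Biggr).
\]
Thus $\ord_0(L\circ n_k) = \nu_k$ precisely when the leading coefficient $\sum_{j\in J_k} c_j\,a_{kj}(0)$ is nonzero, and is strictly greater than $\nu_k$ otherwise.

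The first step is to identify the genericity condition: for each $k = 1,\dots,\MM$, the condition
\[
\sum_{j\in J_k} c_j\,a_{kj}(0) \neq 0
\]
defines a nonempty Zariski-open subset $U_k \subseteq \mathbb{C}^n$ of coefficient vectors $(c_1,\dots,c_n)$ (nonempty because at least one $a_{kj}(0)$ for $j\in J_k$ is nonzero). The intersection $U := \bigcap_{k=1}^{\MM} U_k$ is then a nonempty Zariski-open, hence generic, subset of $\mathbb{C}^n$. For $(c_1,\dots,c_n)\in U$, we have the equality $\ord_0(L\circ n_k) = \nu_k$ for every $k$ simultaneously.

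The second step is to verify that the intersection multiplicity is actually finite, so that Theorem~\ref{multofideal-thm: compute intersection numbers by normalisation} applies. Since each $\nu_k$ is finite (all $\mu_{kj}$ are positive integers), Proposition~\ref{multofideal-prop: finite order vanishing implies finite intersection multiplicity} guarantees that
\[
\dim_{\mathbb{C}} \mathcal{O}_{\mathbb{C}^n,0}/\langle F_1,\dots,F_{n-1},L\rangle < \infty
\]
for every $L$ corresponding to a vector in $U$. Applying Theorem~\ref{multofideal-thm: compute intersection numbers by normalisation} directly yields
\[
\dim_{\mathbb{C}} \mathcal{O}_{\mathbb{C}^n,0}/\langle F_1,\dots,F_{n-1},L\rangle
=
\sum_{k=1}^{\MM} m_k \,\ord_0(L\circ n_k)
=
\sum_{k=1}^{\MM} m_k \,\min\{\mu_{k1},\dots,\mu_{kn}\},
\]
which is the claimed formula.

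The only real subtlety is the genericity argument: one must check that the hyperplane-avoidance conditions indexed by $k$ are each nontrivial, i.e.\ that some $a_{kj}(0)$ with $j\in J_k$ is nonzero. But this is automatic from the definition of the parametrization $n_k$, since by convention all $a_{kj}(0)\neq 0$ (otherwise the exponent $\mu_{kj}$ would not be the correct vanishing order of the $j$-th component). The remaining work is essentially bookkeeping, so I do not anticipate a genuine obstacle here.
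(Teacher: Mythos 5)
Your proposal is correct and follows essentially the same route as the paper: apply Theorem~\ref{multofideal-thm: compute intersection numbers by normalisation} with $f=L$, expand $L\circ n_k$ along the parametrizations to see that $\ord_0(L\circ n_k)=\min\{\mu_{k1},\dots,\mu_{kn}\}$ off a finite union of hyperplanes in the coefficients $(c_1,\dots,c_n)$, and invoke Proposition~\ref{multofideal-prop: finite order vanishing implies finite intersection multiplicity} for finiteness. Your statement of the genericity condition (summing only over $j\in J_k$) is in fact slightly more precise than the paper's displayed hyperplane condition, which sums over all $j$.
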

\begin{proof}
First of all, $L$ may be written as
\[
L=\sum_{j=1}^{n}c_{j}z_{j}
\eqno
{\scriptstyle (c_{k}\,\in\,\mathbb{C})}.
\]
Suppose that the intersection multiplicity of the ideal $\langle F_{1},\dots, F_{n-1},L\rangle$ is finite, by Theorem \ref{multofideal-thm: compute intersection numbers by normalisation}, 
\[
\dim_{\mathbb{C}}
\mathcal{O}_{\mathbb{C}^{n},0}
/
\langle F_{1},\dots, F_{n-1},L\rangle
=
\sum_{k=1}^{\MM}m_{k}\ 
\ord_{0}L\circ n_{k}.
\]
By Proposition \ref{multofideal-prop: finite order vanishing implies finite intersection multiplicity}, it suffices to choose an appropriate $L$ such that $\ord\ L\circ n_{k}<\infty$. First, observe that
\begin{equation}\label{multofideal-eqn-1}
\begin{aligned}
L\circ n_{k}
 & = 
\sum_{j=1}^{n}c_{j}\zeta^{\mu_{kj}}
a_{kj}(\zeta)\\
 & = 
\zeta
^{\minsmall\{\mu_{k1},\dots,\mu_{kn}\}}
\sum_{\{j:\mu_{kj}=\minsmall\{\mu_{k1},\dots,\mu_{kn}\}\}}
c_{j}a_{kj}(\zeta)
+
O(\zeta
^{\minsmall\{\mu_{k1},\dots,\mu_{kn}\}+1}).
\end{aligned}
\end{equation}
If
\[
(c_{1},\dots, c_{n})
\in 
\mathbb{C}^{n}
-
\bigcup_{k=1}^{\MM}
\left\{(d_{1},\dots,d_{n})\in\mathbb{C}^{n}:\ 
\sum_{j=1}^{n}d_{j}a_{kj}(0)=0
\right\},
\]
then by equation \eqref{multofideal-eqn-1},
\[
\ord_{0}L\circ n_{k}=
\min\{\mu_{k1},\dots,\mu_{kn}\}<\infty.
\]
This completes the proof.
\end{proof}

\section{Generic Selection of Linear Combinations for Effective Termination}
The following proposition appears in \cite[p~1190]{Siu-2010}.
\begin{Proposition}\label{genselect-prop-main result}
Let $0\leqslant q\leqslant n$, and $f_{1},\dots,f_{q}$ be holomorphic function germs on $\mathbb{C}^{n}$ at the origin such that the common zero set $\{f_{1}=\dots=f_{q}=0\}$ is a pure $(n-q)$-dimensional variety germ in $\mathbb{C}^{n}$ at the origin. Let $m$ be the multiplicity of the ideal $\langle f_{1},\dots, f_{q}\rangle$ in the sense that for any $(n-q)$ generic homogeneous linear functions $L_{1}$,\dots,$L_{n-q}$,
\[
\dim_{\mathbb{C}}\ 
\mathcal{O}_{\mathbb{C}^{n},0}
/
\langle 
f_{1},\dots, f_{q},L_{1},\dots,
L_{n-q}\rangle
=
m.
\]
Let $V(f_{1},\dots,f_{q},L_{1},\dots, L_{n-q})$ be a pure $1$ dimensional analytic variety and let 
\[
V(f_{1},\dots,f_{q},L_{1},\dots, L_{n-q})
=
\bigcup_{k=1}^{\MM}Z_{k}
\]
be the irreducible decomposition of $V(f_{1},\dots,f_{q},L_{1},\dots, L_{n-q})$. Let $F_{1},\dots,F_{\NN}$ be holomorphic function germs in $\mathcal{O}_{\mathbb{C}^{n},0}$ vanishing at the origin and $p\geqslant 1$ be an integer such that 
\[
|z|^{p}\lesssim
\sum_{i=1}^{\NN}|F_{i}|.
\]
Then there exist $\MM$ hyperplanes, $H_{1}$,\dots,$H_{\MM}$, in $\mathbb{C}^{\NN}$ such that for any
\[
(c_{1},\dots,c_{\NN})
\in 
\mathbb{C}^{\NN}
-
\bigcup_{i=1}^{\MM}H_{i},
\]
and for any generic $(n-q-1)$ homogeneous linear functions $L_{1}$,\dots,$L_{n-q-1}$ the following inequality holds
\[
\dim_{\mathbb{C}}
\mathcal{O}_{\mathbb{C}^{n},0}
\bigg/
\left\langle
f_{1},\dots,f_{q},
\sum_{j=1}^{\NN}c_{j}F_{j},
L_{1},\dots,L_{n-q-1}
\right\rangle
\leqslant mp.
\]
\end{Proposition}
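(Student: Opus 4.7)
The plan is to reduce to the curve case treated in Proposition \ref{multofideal-prop: multiplicity of an ideal for curves-explicit version} by slicing the pure $(n-q)$-dimensional germ $V(f_1,\ldots,f_q)$ with $n-q-1$ generic homogeneous linear functions $L_1,\ldots,L_{n-q-1}$. The resulting curve
\[
C := V(f_1,\ldots,f_q,L_1,\ldots,L_{n-q-1}) = \bigcup_{k=1}^{\MM} Z_k
\]
is pure one-dimensional, and each irreducible component $Z_k$ admits a local normalization
\[
n_k(\zeta) = \bigl(\zeta^{\mu_{k1}} a_{k1}(\zeta),\ldots,\zeta^{\mu_{kn}} a_{kn}(\zeta)\bigr), \qquad a_{kj}(0)\neq 0.
\]
Setting $\mu_k := \min\{\mu_{k1},\ldots,\mu_{kn}\}$, Proposition \ref{multofideal-prop: multiplicity of an ideal for curves-explicit version} (with one further generic linear slice, by definition of $m$) already yields
\[
m = \sum_{k=1}^{\MM} m_k\, \mu_k,
\]
where the positive integers $m_k$ are those produced by Theorem \ref{multofideal-thm: compute intersection numbers by normalisation}.

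Next, I would translate the \L ojasiewicz-type bound $|z|^p \lesssim \sum_i |F_i|$ into vanishing-order information along each $Z_k$. Substituting $z = n_k(\zeta)$ and using $|n_k(\zeta)| \asymp |\zeta|^{\mu_k}$ (the coordinate with smallest exponent dominates), one gets
\[
|\zeta|^{p\mu_k} \lesssim \sum_{i=1}^{\NN} |F_i(n_k(\zeta))|,
\]
which forces the minimal vanishing order
\[
\nu^{(k)} := \min\{\ord_0(F_1\circ n_k),\ldots,\ord_0(F_{\NN}\circ n_k)\}
\]
to satisfy $\nu^{(k)} \leqslant p\mu_k$. Let $\alpha_i^{(k)}$ denote the coefficient of $\zeta^{\nu^{(k)}}$ in the Taylor expansion of $F_i \circ n_k$; by construction at least one $\alpha_i^{(k)}$ is nonzero, so the linear form
\[
\Lambda_k : (c_1,\ldots,c_{\NN}) \longmapsto \sum_{i\,:\,\ordsmall_{0}(F_i \circ n_k)=\nu^{(k)}} c_i\,\alpha_i^{(k)}
\]
is nontrivial and its zero locus $H_k \subset \mathbb{C}^{\NN}$ is a proper hyperplane.

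Finally, pick $c=(c_1,\ldots,c_{\NN})\in \mathbb{C}^{\NN}\setminus\bigcup_{k=1}^{\MM} H_k$. For such $c$, the composition $(\sum_j c_j F_j)\circ n_k$ has vanishing order exactly $\nu^{(k)}$, hence finite for every $k$. Proposition \ref{multofideal-prop: finite order vanishing implies finite intersection multiplicity} then guarantees that $\langle f_1,\ldots,f_q, L_1,\ldots,L_{n-q-1}, \sum_j c_j F_j\rangle$ has finite intersection multiplicity, and Theorem \ref{multofideal-thm: compute intersection numbers by normalisation} immediately yields
\[
\dim_{\mathbb{C}}\mathcal{O}_{\mathbb{C}^n,0}\bigg/\bigg\langle f_1,\ldots,f_q,\sum_{j=1}^{\NN}c_j F_j,L_1,\ldots,L_{n-q-1}\bigg\rangle = \sum_{k=1}^{\MM} m_k\, \nu^{(k)} \leqslant p\sum_{k=1}^{\MM} m_k\, \mu_k = pm.
\]
The only delicate point is ensuring the two genericity choices are compatible: the $L_\bullet$ must be chosen so that the decomposition $C=\bigcup Z_k$ genuinely computes $m$ via the $m_k$ and $\mu_k$, while $c$ must avoid the finitely many hyperplanes $H_k\subset \mathbb{C}^{\NN}$; both are open-dense conditions, so no serious obstruction arises beyond bookkeeping.
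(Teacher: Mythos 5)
Your proposal is correct and follows essentially the same route as the paper's own proof: slice down to the pure one-dimensional germ with $n-q-1$ generic linear forms, pull the \L ojasiewicz inequality $|z|^{p}\lesssim\sum_{i}|F_{i}|$ back through the normalizations $n_{k}$ to bound the minimal vanishing order $t_{k}\leqslant p\,s_{k}$ on each branch, cut out one hyperplane $H_{k}\subset\mathbb{C}^{\NN}$ per branch so that the generic linear combination attains that minimal order, and conclude via Theorem \ref{multofideal-thm: compute intersection numbers by normalisation} together with the identification $m=\sum_{k}m_{k}\min\{\mu_{k1},\dots,\mu_{kn}\}$ from Proposition \ref{multofideal-prop: multiplicity of an ideal for curves-explicit version}. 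No substantive differences or gaps.
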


\begin{proof}
As in the statement of the proof, let
\[
V(f_{1},\dots,f_{q},L_{1},\dots, L_{n-q})
=
\bigcup_{k=1}^{\MM}Z_{k}
\]
be the irreducible decomposition of the pure $1$-dimensional analytic variety, and let 
\[
n_{k}:(\mathbb{C},0)
\longrightarrow
(Z_{k},0)
\]
be normalisations of $Z_{k}$. By Theorem \ref{multofideal-thm: compute intersection numbers by normalisation} and Proposition \ref{multofideal-prop: finite order vanishing implies finite intersection multiplicity}, there exist strictly positive integers $m_{1}$,\dots,$m_{\MM}$ such that for any holomorphic function germs $g\in\mathcal{O}_{\mathbb{C}^{n},0}$ with $\ord_{0}\ g\circ n_{k}<\infty$ for all $k$,
\[
\dim_{\mathbb{C}}\ 
\mathcal{O}_{\mathbb{C}^{n},0}
\big/
\langle 
f_{1},\dots,f_{q},
g,
L_{1},\dots,L_{n-q-1}
\rangle
=
\sum_{k=1}^{\MM}
m_{k}\ \ord_{0}\ g\circ n_{k}.
\]
It suffices to find suitable constants $(c_{1},\dots,c_{\NN})$ such that the order of vanishing of the following function
\[
g\circ n_{k}:=
\sum_{j=1}^{\NN}\ 
c_{j}F_{j}\circ n_{k}
\]
is finite for all $k$. For each fixed $1\leqslant k\leqslant \MM$, the map $n_{k}$ may be explicitly written as
\[
n_{k}:
\zeta
\longmapsto 
(\zeta^{\mu_{k,1}}a_{k,1}(\zeta),
\dots,
\zeta^{\mu_{k,n}}a_{k,n}(\zeta)
),
\]
where for each $1\leqslant l\leqslant n$, $a_{k,l}(0)\neq 0$. Let
\[
s_{k}
:=
\min\ \{\mu_{k,1},\dots,\mu_{k,n}\}
\eqno
{\scriptstyle{(1\,\leqslant\, k\,\leqslant\, \MM).}}
\]
Pulling back the inequality
\[
|z|^{p}\lesssim
\sum_{j=1}^{\NN}|F_{j}|
\]
by the normalisations give
\reqnomode\usetagform{EngelLie}
\begin{align*}
 |\zeta|^{s_{k}p}
& \lesssim  
|(\zeta^{\mu_{k,1}}a_{k,1}(\zeta),\dots,
\zeta^{\mu_{k,n}}a_{k,n}(\zeta))|^{p}
\notag
\\
&\leqslant  
 A\sum_{j=1}^{\NN}
 |F_{j}\circ n_{k}(\zeta)|
 \tag{(1\,\leqslant\, k\,\leqslant\,\MM).}
 \end{align*}
Consequently, not all $F_{j}\circ n_{k}$ vanish at the same time. For any $F_{j}\circ n_{k}\not\equiv 0$, the one-variable holomorphic function may be expanded into power series
 \[
 F_{j}\circ n_{k}
 =
 \sum_{l=t_{j,k}}^{\infty}
 F_{j,k,l}\zeta^{l}
 \eqno
 {\scriptstyle{(1\,\leqslant\, j\,\leqslant\, \NN,\,\,1\,\leqslant\, k\,\leqslant\, \MM),}}
 \]
 where $t_{j,k}=\ord_{0}\ F_{j}\circ n_{k}$ and $F_{j,k,l}\in\mathbb{C}$. By convention, $t_{j,k}=\infty$ if $F_{j}\circ n_{k}\equiv 0$. For a fixed $1\leqslant k\leqslant \MM$, let
 \[
 t_{k}
:=
 \min\{
 t_{j,k}:\ 
 1\leqslant j\leqslant \NN,\ 
 F_{j}\circ n_{k}\not\equiv 0\}
 <\infty.
 \]
 Hence
 \[
 |\zeta|^{s_{k}p}
 \lesssim
 |\zeta|^{t_{k}}
 \eqno
 {\scriptstyle{(1\,\leqslant\, k\,\leqslant\, \MM),}}
 \]
which implies that $t_{k}\leqslant s_{k}p$, since $|\zeta|\ll 1$. For any $(c_{1},\dots,c_{\NN})\in\mathbb{C}^{\NN}$, 
\reqnomode\usetagform{EngelLie}
 \begin{align}
 \sum_{j=1}^{\NN}c_{j}F_{j}\circ n_{k}
 &= \sum_{j=1}^{\NN}
 \sum_{l\geqslant t_{j,k}}^{\infty}
 c_{j}F_{j,k,l}\zeta^{l}
 \notag
 \\
 &=
 \left(
 \sum_{\{j:\ t_{j,k}=t_{k}\}}
 c_{j}F_{j,k,t_{k}}
 \right)
 \zeta^{t_{k}}
+
O(\zeta^{t_{k}+1})
\tag{(1\,\leqslant\, k\,\leqslant\, \MM).}
 \end{align}
 Therefore, the order of vanishing of $\sum_{j=1}^{\NN}c_{j}F_{j}\circ n_{k}$ is exactly $t_{k}$ if $\sum_{\{j:t_{j,k}=t_{k}\}}
 c_{j}F_{j,k,t_{k}}\neq 0$.
 If
 \[
 (c_{1},\dots,c_{\NN})\in\mathbb{C}^{\NN}
 -
 \bigcup_{k=1}^{\MM}
 \left\{
 (d_{1},\dots,d_{\NN})\in\mathbb{C}^{\NN}:\ 
 \sum_{\{j:\ t_{j,k}=t_{k}\}}
 d_{j}F_{k,j,t_{k}}
 =
 0
 \right\}
 \]
 which in the complement of the union of $\MM$ hyperplanes, then for each $1\leqslant k\leqslant \MM$,
 \[
 \ord_{0}F\circ n_{k}
 =
 t_{k}
 \leqslant s_{k}p.
 \]
 Consequently, 
  \begin{eqnarray*}
 &&\dim_{\mathbb{C}}
 \mathcal{O}_{\mathbb{C}^{n},0}
 \bigg/
 \left\langle f_{1},\dots,f_{q},
 \tilde{F}_{1},\dots,\tilde{F}_{\nu},\sum_{j=1}^{\NN}c_{j}F_{j},
 L_{1},\dots,L_{n-q-\nu-1}
 \right \rangle\\
 &=&
 \sum_{k=1}^{\MM}
 m_{k}\ \ord_{0}
 \sum_{j=1}^{\NN}
 c_{j}F_{j}\circ n_{k}\\
 &=&
 \sum_{k=1}^{\MM}m_{k}t_{k}
\leqslant 
 \sum_{k=1}^{\MM}
 m_{k}s_{k}p\\
 &&
 \ \ \ \ \ \ \ \ \ \ \ \ \ \ \ \ =p\sum_{k=1}^{\MM}m_{k}s_{k}\\
 &&
 \ \ \ \ \ \ \ \ \ \ \ \ \ \ \ \ 
 =
 p\sum_{k=1}^{\MM}m_{k}
 \min\{\mu_{k,1},\dots,\mu_{k,n}\}.
 \end{eqnarray*}
 By Proposition \ref{multofideal-prop: multiplicity of an ideal for curves-explicit version}, the number $\sum_{k=1}^{\MM}m_{k}
 \min\{\mu_{k,1},\dots,\mu_{k,n}\}$ is the intersection multiplicity of the curve $V(f_{1},\dots,f_{q},L_{1},\dots,L_{n-q-1})$ with a generic hyperplane defined by $\{L_{n-q}=0\}$. By hypothesis,
 \[
 \sum_{k=1}^{\MM}m_{k}
 \min\{\mu_{k,1},\dots,\mu_{k,n}\}=m
 \]
 and this completes the proof.
\end{proof}

\subsubsection{In dimension $2$} We will state the corollary of Proposition \ref{genselect-prop-main result} in the case of dimension $2$.

\begin{Corollary}\label{genselect-cor-main result}
Let $F_{1}$, \dots, $F_{\NN}$ be holomorphic functions in $\mathcal{O}_{\mathbb{C}^{2},0}$ such that the ideal $\mathcal{I}_{F}=\langle F_{1},\dots,F_{\NN}\rangle$ has finite intersection multiplicity with data $(p,q,s)$.
Then there exist generic constants $(c_{1},\dots, c_{\NN})\in\mathbb{C}^{\NN}$ such that 
\[
\mult_{0}\left(
\sum_{j=1}^{\NN}c_{j}F_{j}
\right)
\leqslant q
\leqslant 4p.
\]
Moreover, let $V(\tilde{F}_{1})=\cup_{k=1}^{\MM}Z_{k}$ be the irreducible decomposition of the variety. Then there exist $\MM$ hyperplanes $H_{1}$, \dots,$H_{\MM}$ in $\mathbb{C}^{\NN}$ such that for all $(d_{1},\dots,d_{\NN})\in\mathbb{C}^{\NN}-
\cup_{i=1}^{\MM}H_{i}$,
\[
\dim_{\mathbb{C}}\ 
\mathcal{O}_{\mathbb{C}^{n},0}
\bigg/
\left\langle
\sum_{j=1}^{\NN}c_{j}F_{j},\ 
\sum_{j=1}^{\NN}d_{j}F_{j}
\right\rangle
\leqslant 4p^{2}\leqslant 4s^{2}.
\]
\end{Corollary}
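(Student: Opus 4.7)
The plan is to apply Proposition~\ref{genselect-prop-main result} twice in ambient dimension $n=2$: first with its parameter (called $q$ there) equal to $0$, then with the parameter equal to $1$, taking as $f_{1}$ the generic combination produced in the first step. Throughout, $(p,q,s)$ denote the intersection invariants of $\mathcal{I}_{F}$ given in the hypothesis.

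For the first chain of inequalities, take the parameter of Proposition~\ref{genselect-prop-main result} to be $0$: there are no germs $f_{i}$ and the common zero set is all of $\C^{2}$ (pure $2$-dimensional). The multiplicity $m$ of the zero ideal is, by definition, $\dim_{\C}\mathcal{O}_{\C^{2},0}/\langle L_{1},L_{2}\rangle = 1$ for generic linear forms $L_{1},L_{2}$. The conclusion of the proposition then produces hyperplanes in $\C^{\NN}$ outside of which, for a generic linear form $L_{1}$,
\[
\dim_{\C}\ \mathcal{O}_{\C^{2},0}\bigg/\bigg\langle \sum_{j=1}^{\NN}c_{j}F_{j},\ L_{1}\bigg\rangle\ \leqslant\ m\cdot p\ =\ p.
\]
By the geometric characterisation of multiplicity as intersection with a generic hyperplane (Paragraph~\ref{multofideal-ss2}), the left-hand side equals $\mult_{0}(\sum_{j}c_{j}F_{j})$. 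Chaining with $p\leqslant q\leqslant (n+2)p = 4p$ from Proposition~\ref{LAG-prop-Relations between the intersection invariants} yields $\mult_{0}(\sum_{j}c_{j}F_{j})\leqslant p\leqslant q\leqslant 4p$.

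For the second chain, set $\tilde F_{1} := \sum_{j}c_{j}F_{j}$. Finite multiplicity forces $\tilde F_{1}\not\equiv 0$, so $V(\tilde F_{1})\subset\C^{2}$ is a pure $1$-dimensional hypersurface with irreducible decomposition $\bigcup_{k=1}^{\MM}Z_{k}$. Apply Proposition~\ref{genselect-prop-main result} a second time, with parameter $1$ and $f_{1}=\tilde F_{1}$. The multiplicity of the principal ideal $\langle\tilde F_{1}\rangle$ is $m = \dim_{\C}\mathcal{O}_{\C^{2},0}/\langle\tilde F_{1},L_{1}\rangle = \mult_{0}\tilde F_{1}\leqslant p$ by the first chain. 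Since $n-q-1=0$, no extra linear forms are required, and the conclusion furnishes $\MM$ hyperplanes $H_{1},\ldots,H_{\MM}\subset\C^{\NN}$ such that for $(d_{1},\ldots,d_{\NN})\notin\bigcup H_{i}$,
\[
\dim_{\C}\ \mathcal{O}_{\C^{2},0}\bigg/\bigg\langle \tilde F_{1},\ \sum_{j=1}^{\NN}d_{j}F_{j}\bigg\rangle\ \leqslant\ m\cdot p\ \leqslant\ p\cdot p\ =\ p^{2}\ \leqslant\ 4p^{2}.
\]
The final bound $4p^{2}\leqslant 4s^{2}$ is immediate from $p\leqslant s$ in Proposition~\ref{LAG-prop-Relations between the intersection invariants}.

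The only delicate point is pinning down the multiplicity $m$ in each invocation of Proposition~\ref{genselect-prop-main result}: for parameter $0$ it is the multiplicity of the zero ideal, equal to $1$; for parameter $1$ applied to a principal ideal it coincides with the classical $\mult_{0}\tilde F_{1}$ through intersection with a generic line. Once these are identified, both assertions reduce at once to the bound $mp$ supplied by the proposition, combined with the elementary relations $p\leqslant q\leqslant (n+2)p$ and $p\leqslant s$ of Proposition~\ref{LAG-prop-Relations between the intersection invariants} specialised to $n=2$.
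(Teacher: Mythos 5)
Your proposal is correct, and its second half coincides with the paper's: both obtain the bound on $\dim_{\C}\mathcal{O}_{\C^{2},0}/\langle\sum c_{j}F_{j},\sum d_{j}F_{j}\rangle$ by feeding $f_{1}=\tilde F_{1}=\sum c_{j}F_{j}$ into Proposition~\ref{genselect-prop-main result} with the multiplicity $m=\mult_{0}\tilde F_{1}$ of the principal ideal, identified through intersection with a generic line. Where you diverge is the first chain of inequalities. The paper argues elementarily: if every $F_{i}$ had $\mult_{0}F_{i}\geqslant q+1$ then $\mathfrak{m}^{q}\subseteq\mathcal{I}_{F}\subseteq\mathfrak{m}^{q+1}$, a contradiction, so some $F_{i}$ and hence a generic combination has multiplicity at most $q\leqslant 4p$. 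You instead invoke Proposition~\ref{genselect-prop-main result} in the degenerate case of an empty family (parameter $0$), where the curve is a generic line and $m=1$, obtaining $\dim_{\C}\mathcal{O}_{\C^{2},0}/\langle\sum c_{j}F_{j},L_{1}\rangle\leqslant p$. This is legitimate and actually yields the sharper conclusion $\mult_{0}(\sum c_{j}F_{j})\leqslant p$ (the same fact also follows directly: if all $\mult_{0}F_{i}\geqslant p+1$ then $|z|^{p}\lesssim\sum|F_{i}|\lesssim|z|^{p+1}$ fails near $0$), which then propagates to the bound $p^{2}$ rather than the paper's $4p^{2}$ in the second part; both of course imply the stated $4p^{2}\leqslant 4s^{2}$. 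One small caveat in your write-up: the equality of $\dim_{\C}\mathcal{O}_{\C^{2},0}/\langle\sum c_{j}F_{j},L_{1}\rangle$ with $\mult_{0}(\sum c_{j}F_{j})$ requires $L_{1}$ to be transversal to the tangent cone of the combination, which is a further genericity condition on $L_{1}$; but since $\ord_{0}(G|_{L})\geqslant\mult_{0}G$ for \emph{every} line $L$, only the inequality is needed and your conclusion stands without checking this.
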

\begin{proof}
First, there exists $1\leqslant i\leqslant \NN$ such that $\mult_{0}\ F_{i}\leqslant q$. Otherwise, if $\mult_{0}\ F_{i}\geqslant q+1$ for every $1\leqslant i\leqslant \NN$, then 
\[
\mathfrak{m}^{q}
\subseteq
\langle F_{1},\dots,F_{\NN}\rangle
\subseteq
\mathfrak{m}^{q+1},
\]
which is a contradiction. So let $(c_{1},\dots,c_{\NN})$ be constants so that 
\[
\mult_{0}\left(
\sum_{j=1}^{\NN}c_{j}F_{j}
\right)
\leqslant q\leqslant 4p,
\]
where the last inequality follows from Proposition \ref{LAG-prop-Relations between the intersection invariants}. Then the existence of $\MM$ hyperplanes in $\mathbb{C}^{\NN}$ and constants $(d_{1},\dots, d_{\NN})$ so that the conclusion holds follow directly from the previous propostion, and the proof is complete.
\end{proof}
\section{Proper Maps and Projections}
\subsubsection{}\label{propmap-para-1} In this section, let $h_{1}$,\dots,$h_{n}$ be holomorphic function germs in $\mathcal{O}_{\mathbb{C}^{n},0}$ vanishing at the origin with 
\[
\dim_{\mathbb{C}}
\mathcal{O}_{\mathbb{C}^{n},0}
\big/
\langle h_{1},\dots, h_{n}\rangle
=:
s
<\infty.
\]
Hence the $(n-1)$-tuple $(h_{1},\dots, h_{n-1})$ forms a regular sequence. By Proposition \ref{multofideal-prop: multiplicity of an ideal for curves-explicit version}, and by a suitable linear change of coordinates, there exists a positive integer $m$ such that 
\[
\dim_{\mathbb{C}}
\mathcal{O}_{\mathbb{C}^{n},0}
\big/
\langle h_{1},\dots, h_{n-1},z_{n}\rangle
=:m
\]
which is the multiplicity of the ideal $\langle h_{1},\dots,h_{n-1}\rangle$.

\subsubsection{}\label{propmap-para-2} The map
\begin{eqnarray*}
\varphi:(\mathbb{C}^{n},0) 
&\longrightarrow & 
(\mathbb{C}^{n},0)\\
(z_{1},\dots,z_{n})
&\longmapsto & 
(h_{1}(z),\dots,h_{n-1}(z),z_{n})=:
(w_{1},\dots,w_{n-1},w_{n})
\end{eqnarray*}
is proper and open with finite fibres. Let $H=\{h_{n}=0\}$ be the hypersurface defined by the zeros of $h_{n}$. By Remmert's proper mapping theorem\footnote{Remmert's proper mapping theorem may be stated as follows: if $M$ and $N$ are complex manifolds, $f:M\rightarrow N$ a holomorphic map and $V\subset M$ an analytic variety such that $f|_{V}$ is proper, then $f(V)$ is an analytic subvariety of $N$.}, the image $\varphi(H)$ is also an analytic set. Since the map restricted to the hypersurface $H$:
\[
\varphi|_{H}:H\longrightarrow 
\varphi(H)
\]
is surjective with finite fibres, by  section 4, paragraph \ref{idealgrad-para-8} (or \cite[p~129, Lemma~4.1.4]{deJong-Pfister-2000}), one has $\dim\ H=\dim\ \varphi(H)=n-1$.
\subsubsection{}\label{propmap-para-3} Since $\varphi(H)$ is of dimension $n-1$, it is a  hypersurface locally defined at the origin by a certain holomorphic function $\tilde{h}_{n}(w_{1},\dots,w_{n})$, which will be shown to have the following properties:

\smallskip\noindent{\bf (i)}
$\tilde{h}_{n}(0,\dots, 0,w_{n})\not\equiv 0$ with certain order of vanishing $\lambda:=\ord_{0}(\tilde{h}_{n}(0,z_{n}))$. By the Weierstrass Preparation Theorem, $\tilde{h}_{n}$ may be expressed as
\[
\tilde{h}_{n}(w)
=
u(w)
\left(
w_{n}^{\lambda}
+
\sum_{j=0}^{\lambda-1}
a_{j}(w_{1},\dots,w_{n-1})w_{n}^{j}
\right),
\]
for some unit $u(w)$, and $a_{j}(0)=0$ for all $0\leqslant j\leqslant \lambda-1$. 

\smallskip\noindent{\bf (ii)}
$\lambda\leqslant s$.
\subsubsection{}\label{propmap-para-4}
\begin{Lemma}\label{propmap-lem-projection from H to first n-1 coordinates has finite distinct fibres}
Let $h_{1}$,\dots,$h_{n}$ be holomorphic function germs in $\mathcal{O}_{\mathbb{C}^{n},0}$ vanishing at the origin such that the intersection multiplicity of the ideal $\langle h_{1},\dots,h_{n}\rangle$ is finite with data $(p,q,s)$. Let $H:=\{h_{n}=0\}$ be the hypersurface defined as the vanishing locus of $h_{n}$. Consider the map:
\begin{eqnarray*}
\psi: H &\longrightarrow & \mathbb{C}^{n-1}\\
z:=(z_{1},\dots,z_{n}) &\longmapsto & 
(h_{1}(z),\dots,h_{n-1}(z)).
\end{eqnarray*}
Then there exists a open neighbourhood $U\subseteq\mathbb{C}^{n-1}$ of the origin $0\in\mathbb{C}^{n-1}$ such that for every $\alpha:=(\alpha_{1},\dots,\alpha_{n-1})\in U$, there are at most $s$ distinct elements in $\psi^{-1}(\alpha)$.
\end{Lemma}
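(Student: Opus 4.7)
The plan is to reduce the statement to the ramified covering property of the enlarged map $\Phi \colon (\mathbb{C}^{n},0) \to (\mathbb{C}^{n},0)$ defined by
\[
\Phi(z) := (h_{1}(z),\dots,h_{n-1}(z),h_{n}(z)).
\]
By the hypothesis $\dim_{\mathbb{C}}\ \mathcal{O}_{\mathbb{C}^{n},0}/\langle h_{1},\dots,h_{n}\rangle = s < \infty$, Theorem \ref{LAG-thm-characterisations of complete intersections 2} (equivalence (i)$\Longleftrightarrow$(ii)) tells us exactly that $\Phi$ realises a ramified $s$-sheeted analytic covering at the origin. Concretely, I would pick representatives in such a way that there exist open neighbourhoods $V \subseteq \mathbb{C}^{n}$ of $0$ (in the source) and $W \subseteq \mathbb{C}^{n}$ of $0$ (in the target) so that $\Phi \colon V \to W$ is proper, surjective, with $|\Phi^{-1}(w)| \leqslant s$ for every $w \in W$, and with equality on a Zariski-open dense subset of $W$.

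The second step is the elementary observation that identifies the fibres of $\psi$ with certain fibres of $\Phi$: for $\alpha = (\alpha_{1},\dots,\alpha_{n-1}) \in \mathbb{C}^{n-1}$, a point $z$ lies in $\psi^{-1}(\alpha) \cap V$ if and only if $h_{i}(z) = \alpha_{i}$ for all $1 \leqslant i \leqslant n-1$ together with $h_{n}(z)=0$, which is precisely the condition $\Phi(z) = (\alpha_{1},\dots,\alpha_{n-1},0)$. Hence
\[
\psi^{-1}(\alpha) \cap V = \Phi^{-1}\bigl((\alpha_{1},\dots,\alpha_{n-1},0)\bigr).
\]

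To conclude, I would set $U := \{\alpha \in \mathbb{C}^{n-1} : (\alpha,0) \in W\}$, which is an open neighbourhood of the origin in $\mathbb{C}^{n-1}$ by continuity. Then for every $\alpha \in U$, the displayed identity combined with the covering bound for $\Phi$ yields
\[
\bigl|\psi^{-1}(\alpha)\bigr| \;=\; \bigl|\Phi^{-1}((\alpha,0))\bigr| \;\leqslant\; s,
\]
which is the conclusion. The main subtlety to be careful with is purely a question of representatives: we must choose the open set defining the germ $H$ to lie inside $V$, so that $\psi^{-1}(\alpha)$ genuinely equals $\Phi^{-1}((\alpha,0))$ rather than only containing it; once the covering neighbourhoods $V, W$ for $\Phi$ are fixed at the outset, this shrinking is automatic. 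No hard estimate is needed beyond invoking Theorem \ref{LAG-thm-characterisations of complete intersections 2}.
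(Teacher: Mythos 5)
Your proposal is correct and follows essentially the same route as the paper: both arguments rest on the fact that the enlarged map $z\mapsto(h_{1}(z),\dots,h_{n}(z))$ is a ramified $s$-sheeted analytic covering (so its fibres over a neighbourhood of $0$ have at most $s$ distinct points) together with the identification $\psi^{-1}(\alpha)=\Phi^{-1}((\alpha,0))$. The only difference is presentational — the paper phrases it as a proof by contradiction while you argue directly — and your remark about choosing representatives so that the fibre identity is an equality is a welcome point of care.
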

\begin{proof}
We prove by contradiction. Suppose for every open neighbourhood $U\subseteq \mathbb{C}^{n-1}$ of the origin $0\in\mathbb{C}^{n-1}$, there exists a point $\alpha\in U$ such that the number of distinct elements in $\psi^{-1}(\alpha)$ is at least $s+1$. 

By hypothesis, the map
\begin{eqnarray*}
\Psi:\mathbb{C}^{n}
&\longrightarrow &
\mathbb{C}^{n}\\
z 
&\longmapsto & 
(h_{1}(z),\dots,h_{n}(z))
\end{eqnarray*}
is a ramified $s$-sheeted analytic covering map. Hence, there exists a neighbourhood $V=V'\times V''\subseteq \mathbb{C}^{n-1}\times \mathbb{C}$ of the origin $0\in\mathbb{C}^{n}$ such that for every $\beta:=(\beta_{1},\dots, \beta_{n})\in V$, the number of distinct points in $\Psi^{-1}(\beta)$ is at most $s$.

But by our assumption, given $V'$ a neighbourhood of the origin $0\in\mathbb{C}^{n-1}$, there exists a point $\alpha:=(\alpha_{1},\dots,\alpha_{n-1})\in V'$ such that there are at least $s+1$ distinct points in $\psi^{-1}(\alpha)$. Since $(\alpha,0)\in V$ and 
\[
\Psi^{-1}(\alpha,0)=\psi^{-1}(\alpha),
\]
there are at least $s+1$ distinct points in $\Psi^{-1}(\alpha,0)$, which is a contradiction.
\end{proof}

\subsubsection{}\label{propmap-para-5} We will therefore answer the first claim in paragraph $3$.

\begin{Proposition}
Let $h_{1}$,\dots,$h_{n}$ be holomorphic function germs in $\mathcal{O}_{\mathbb{C}^{n},0}$ vanishing at the origin such that the multiplicity of the ideal $\langle h_{1},\dots,h_{n}\rangle$ is $s\in\mathbb{N}_{\geqslant 1}$. Suppose that the holomorphic map
\begin{eqnarray*}
\varphi:\mathbb{C}^{n} &\longrightarrow & 
\mathbb{C}^{n}\\
(z_{1},\dots,z_{n}) &\longmapsto &
(h_{1}(z),\dots,h_{n-1}(z),z_{n})
\end{eqnarray*}
defines a ramified $k$-sheeted covering for some positive integer $k$. Let $\tilde{h}_{n}$ be a  holomorphic function such that $\varphi(\{h_{n}=0\})=\{\tilde{h}_{n}=0\}$. Then $\tilde{h}_{n}(0,z_{n})\not\equiv 0$.
\end{Proposition}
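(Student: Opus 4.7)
The approach is to argue by contradiction, exploiting the fact that the finiteness of $s$ forces $V(h_1,\ldots,h_n) = \{0\}$. Indeed, by the equivalence (i)$\iff$(iii) of Theorem \ref{LAG-thm-characterisations of complete intersection 1}, the common zero set of $h_1,\ldots,h_n$ is, in a small enough neighbourhood of the origin, just the single point $\{0\}$.

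Suppose, toward a contradiction, that $\tilde h_n(0,\ldots,0,w_n) \equiv 0$. Then in a sufficiently small polydisc centred at $0\in\mathbb{C}^{n}$, the germ of the $w_n$-axis $\{(0,\ldots,0,w_n)\}$ is contained in $\{\tilde h_n = 0\}$, which by hypothesis is $\varphi(\{h_n=0\})$. Choose a polydisc $U\subseteq \mathbb{C}^{n}$ small enough that $\varphi|_{U}$ is proper with finite fibres and
\[
\varphi(U\cap \{h_n=0\}) = \{\tilde h_n = 0\}\cap \varphi(U),
\]
the existence of which is already guaranteed by the Remmert proper-mapping argument in paragraph \ref{propmap-para-2}.

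For any $w_n\neq 0$ sufficiently close to $0$ so that $(0,\ldots,0,w_n)\in\varphi(U)$, the inclusion above provides a point $z\in U$ with $h_n(z)=0$ and $\varphi(z)=(0,\ldots,0,w_n)$. Reading off the components of $\varphi$ gives
\[
h_1(z) = \cdots = h_{n-1}(z) = 0, \qquad z_n = w_n,
\]
and combining with $h_n(z)=0$ yields $z \in V(h_1,\ldots,h_n)\cap U$. After shrinking $U$ if necessary, this intersection is reduced to $\{0\}$, so $z=0$ and hence $w_n = 0$, contradicting $w_n\neq 0$.

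There is no genuine obstacle here: the real content is already packaged in Theorem \ref{LAG-thm-characterisations of complete intersection 1}, and the identification $\varphi(\{h_n=0\}) = \{\tilde h_n=0\}$ is given in the hypothesis. The only point requiring a little care is passing from the germ-level equality to a statement on a definite polydisc $U$, but this is routine given the properness of $\varphi$ and the finiteness of its fibres established in paragraph \ref{propmap-para-2}.
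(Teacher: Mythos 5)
Your argument is correct, and it reaches the contradiction by a slightly different route than the paper. Both proofs start the same way: assuming $\tilde h_n(0,\ldots,0,w_n)\equiv 0$ forces the germ of the $w_n$-axis into $\varphi(\{h_n=0\})$, and one then pulls points of that axis back through $\varphi$. The paper feeds this into Lemma \ref{propmap-lem-projection from H to first n-1 coordinates has finite distinct fibres}: the infinitely many points $(0,\ldots,0,w_n)$ produce infinitely many distinct points in $\psi^{-1}(0)=(\projsmall\circ\varphi|_H)^{-1}(0)$, contradicting the bound of $s$ distinct fibre points obtained there from the $s$-sheeted covering $\Psi=(h_1,\ldots,h_n)$. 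You instead observe that any preimage $z$ of $(0,\ldots,0,w_n)$ lying in $\{h_n=0\}$ satisfies $h_1(z)=\cdots=h_n(z)=0$, i.e.\ $z\in V(h_1,\ldots,h_n)=\{0\}$ by Theorem \ref{LAG-thm-characterisations of complete intersection 1}, whence $w_n=z_n=0$ — so a single nonzero $w_n$ already yields the contradiction. The two arguments rest on the same underlying set, since $\psi^{-1}(0)$ is exactly $V(h_1,\ldots,h_n)\cap H$; the paper's version shows this set is finite while yours shows it is the origin alone, which is more elementary and bypasses the fibre-counting lemma entirely (that lemma is still needed later for Proposition \ref{propmap-prop-main result}, so the paper loses nothing by reusing it here). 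Your caveat about upgrading the germ-level equality $\varphi(\{h_n=0\})=\{\tilde h_n=0\}$ to a set equality on a definite neighbourhood is the right one to flag, and is handled the same way (implicitly) in the paper.
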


\begin{proof}
Suppose on the contrary that $\tilde{h}_{n}(0,z_{n})\equiv 0$. Consider the composition of maps
\begin{eqnarray*}
H\stackrel{\varphi}{\xrightarrow{\hspace*{1cm}}}  & \varphi(H) &\mathop{\xrightarrow{\hspace*{1.5cm}}}_{}^
{\projsmall} \mathbb{C}^{n-1}\\
(z_{1},\dots,z_{n})  \xmapsto{\hspace*{1cm}} & (h_{1}(z),\dots,h_{n-1}(z),z_{n})\\
& (w_{1},\dots,w_{n}) &\xmapsto{\hspace*{1.5cm}} (w_{1},\dots,w_{n-1}).
\end{eqnarray*}
Here $\varphi$ is the map in the statement of the proposition and $\projsmall$ is the projection onto the first $n-1$ coordinates. Above $0\in\mathbb{C}^{n-1}$, since $\tilde{h}_{n}(0,z_{n})\equiv 0$,
\[
\{(0,z_{n})\in\mathbb{C}^{n}:\ z_{n}\in\mathbb{C}\}
\subseteq
\{\tilde{h}_{n}=0\}
=
\varphi(H).
\]
Moreover, since $\projsmall(0,z_{n})=0\in\mathbb{C}^{n-1}$, 
\[
\{(0,z_{n})\in\mathbb{C}^{n}:\ z_{n}\in\mathbb{C}\}
\subseteq
\projsmall^{-1}(0).
\]
Therefore, $\projsmall^{-1}(0)$ has infinitely many distinct fibre points. Consequently, $(\projsmall\circ\varphi)^{-1}(0)$ has infinitely many distinct fibre points. But $\proj\circ\varphi=\psi$ in the previous lemma, has finite distinct fibres, contradiction.
\end{proof}

\subsubsection{}\label{propmap-para-6} Next, we will show that $\ord_{0}\ \tilde{h}_{n}(0,z_{n})\leqslant s$.

\begin{Lemma}\label{propmap-lem-finiteness by Weierstrass prep theorem}
Let $\tilde{h}_{n}$ be a holomorphic function germ in $\mathcal{O}_{\mathbb{C}^{n},0}$ with $\tilde{h}_{n}(0)=0$, and $\tilde{h}_{n}(0,z_{n})\not\equiv 0$ so that $\ord_{0}\ \tilde{h}_{n}(0,z_{n})<\infty$. If the projection 
\begin{eqnarray*}
\pi: \{\tilde{h}_{n}=0\} 
&\longrightarrow & (\mathbb{C}^{n-1},0)\\
(\alpha_{1},\dots, \alpha_{n})
&\longmapsto & (\alpha_{1},\dots,\alpha_{n-1})
\end{eqnarray*}
is a finite surjective map with at most $s$ distinct fibre points above each point in $(\mathbb{C}^{n-1},0)$, then $\ord_{0}\ \tilde{h}_{n}(0,z_{n})\leqslant s$.
\end{Lemma}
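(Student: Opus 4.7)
The plan is to apply the Weierstrass preparation theorem to $\tilde{h}_{n}$ and then count fibre points via the resulting distinguished polynomial in $w_{n}$. Since by hypothesis $\lambda := \ord_{0}\ \tilde{h}_{n}(0,z_{n})<\infty$, the Weierstrass preparation theorem furnishes a unit $u\in\mathcal{O}_{\mathbb{C}^{n},0}$ and a Weierstrass polynomial
\[
P(w',w_{n})
=
w_{n}^{\lambda}+a_{\lambda-1}(w')\,w_{n}^{\lambda-1}+\cdots+a_{0}(w'),
\qquad a_{j}(0)=0,
\]
in the variables $w'=(w_{1},\dots,w_{n-1})$ and $w_{n}$, with $\tilde{h}_{n}=u\cdot P$ in a neighbourhood of the origin. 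Since $u$ is a unit, $\{\tilde{h}_{n}=0\}=\{P=0\}$ locally, and for every $w'$ in a small polydisc the fibre $\pi^{-1}(w')$ is precisely the set of roots in $w_{n}$ of the monic polynomial $P(w',\cdot)$ of degree $\lambda$.

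The next step is to show that $P(w',\cdot)$ has $\lambda$ \emph{distinct} roots for $w'$ in a non-empty open subset of every neighbourhood of the origin. For this I would introduce the discriminant $\Delta(w')\in\mathcal{O}_{\mathbb{C}^{n-1},0}$ of $P$ regarded as a monic polynomial in $w_{n}$ and observe that $P$ is square-free in $\mathcal{O}_{\mathbb{C}^{n-1},0}[w_{n}]$: the construction of $\tilde{h}_{n}$ elsewhere in the paper takes it to be the reduction of a Jacobian, so $\tilde{h}_{n}$, hence also $P$, is reduced. Consequently $\Delta\not\equiv 0$, the zero set $\{\Delta=0\}$ is a proper analytic subvariety of a neighbourhood of $0\in\mathbb{C}^{n-1}$, and on its non-empty open complement the polynomial $P(w',\cdot)$ splits as a product of $\lambda$ pairwise distinct linear factors in $w_{n}$.

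Picking any such generic $w'$ arbitrarily close to the origin then yields exactly $\lambda$ distinct points in $\pi^{-1}(w')$. The hypothesis that every fibre of $\pi$ over a point of $(\mathbb{C}^{n-1},0)$ contains at most $s$ distinct points therefore forces $\lambda\leqslant s$, which is the desired conclusion. The main obstacle is precisely the square-freeness of $P$ in $w_{n}$: the naive example $\tilde{h}_{n}=w_{n}^{2}$ would give $\lambda=2$ but only one distinct fibre point above every $w'$, so reducedness of $\tilde{h}_{n}$ (implicit in its construction as a reduction) is exactly the ingredient that allows the discriminant to be non-identically zero and makes the bound $\lambda\leqslant s$ go through.
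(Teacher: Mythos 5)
Your proof follows essentially the same route as the paper's: apply the Weierstrass preparation theorem and count distinct roots of the distinguished polynomial in the fibre over a generic point of the base. In fact you are more careful than the paper, which merely asserts that a generic fibre of $\pi$ contains $\lambda$ distinct points; your discriminant argument supplies the missing justification and correctly isolates reducedness of $\tilde{h}_{n}$ (absent from the lemma's literal hypotheses, as your example $\tilde{h}_{n}=w_{n}^{2}$ shows, but satisfied in the paper's application where $\tilde{h}_{n}$ is a minimal defining function of $\varphi(H)$) as the ingredient that makes the count go through.
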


\begin{proof}
Suppose on the contrary that $\lambda:=\ord_{0}\ \tilde{h}_{n}(0,z_{n})\geqslant s+1$. By the hypothesis that $\lambda<\infty$, Weierstrass Preparation Theorem implies the existence of a unit $u(z_{1},\dots,z_{n})$ and $\ord_{0}\ \tilde{h}_{n}(0,z_{n})$ holomorphic functions $a_{j}(z_{1},\dots,z_{n-1})$ vanishing at $(z_{1},\dots,z_{n-1})=(0,\dots,0)$ such that 
\[
h(z_{1},\dots,z_{n})
=
u(z_{1},\dots,z_{n})
\left(
z_{n}^{\lambda}
+
\sum_{j=0}^{\lambda-1}
a_{j}(z_{1},\dots,z_{n-1})\ z_{n}^{j}
\right).
\]

Therefore, above a generic point $(\alpha_{1},\dots,\alpha_{n-1})\in\mathbb{C}^{n-1}$, the preimages $(\alpha_{1},\dots,\alpha_{n-1},z_{n})$ of $\pi$ which must satisfy the following polynomial equation
\[
z_{n}^{\lambda}
+
\sum_{j=0}^{\lambda-1}
a_{j}(\alpha_{1},\dots,\alpha_{n-1})\ z_{n}^{j}=0
\]
has $\lambda\geqslant s+1$ distinct solutions in $z_{n}$. This contradicts the hypothesis in the statement of the lemma.
\end{proof}


\begin{Proposition}\label{propmap-prop-main result}
Let $h_{1}$,\dots,$h_{n}$ be holomorphic function germs in $\mathcal{O}_{\mathbb{C}^{n},0}$ vanishing at the origin such that 
\[
\dim_{\mathbb{C}}\ 
\mathcal{O}_{\mathbb{C}^{n},0}
\big/
\langle h_{1},\dots,h_{n}\rangle
=s<\infty.
\]
Let $H=\{h_{n}=0\}$. Suppose that the holomorphic map 
\begin{eqnarray*}
\varphi:\mathbb{C}^{n} &\longrightarrow & 
\mathbb{C}^{n}\\
(z_{1},\dots,z_{n}) 
&\longmapsto & 
(h_{1}(z),\dots,h_{n-1}(z),z_{n})
\end{eqnarray*}
is  proper, open so that there exists a holomorphic function $\tilde{h}_{n}(w_{1},\dots,w_{n})$ with $\varphi(H)=\{\tilde{h}_{n}=0\}$. Then $\ord_{0}\ \tilde{h}_{n}(0,\dots,0,w_{n})\leqslant s$.
\end{Proposition}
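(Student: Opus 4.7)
My plan is to reduce the proposition to Lemma~\ref{propmap-lem-finiteness by Weierstrass prep theorem} by transferring the fibre bound of Lemma~\ref{propmap-lem-projection from H to first n-1 coordinates has finite distinct fibres} (which is stated for the map $\psi$ on $H$) onto the analogous projection $\pi:\{\tilde{h}_{n}=0\}\to (\mathbb{C}^{n-1},0)$ defined on the image hypersurface.

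First I will use the proposition of paragraph~\ref{propmap-para-5} to ensure that $\tilde{h}_{n}(0,\ldots,0,w_{n})\not\equiv 0$: the hypotheses that $\varphi$ is proper and open with finite fibres (paragraph~\ref{propmap-para-2}) imply that $\varphi$ is a ramified $k$-sheeted covering for some $k\geqslant 1$, which is precisely the input required by that proposition. Consequently $\lambda := \ord_{0}\ \tilde{h}_{n}(0,\ldots,0,w_{n})$ is finite, and the Weierstrass Preparation Theorem then tells us that the projection
\begin{eqnarray*}
\pi: \{\tilde{h}_{n}=0\} &\longrightarrow & (\mathbb{C}^{n-1},0)\\
(w_{1},\ldots,w_{n}) &\longmapsto & (w_{1},\ldots,w_{n-1})
\end{eqnarray*}
is finite and surjective onto a neighbourhood of the origin.

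The key step will be the factorisation $\psi=\pi\circ \varphi|_{H}$, where $\psi$ is the map of Lemma~\ref{propmap-lem-projection from H to first n-1 coordinates has finite distinct fibres}. Given $\alpha\in\mathbb{C}^{n-1}$ near $0$ and any $(\alpha,\beta)\in\pi^{-1}(\alpha)$, surjectivity of $\varphi|_{H}\colon H\to \varphi(H)=\{\tilde{h}_{n}=0\}$ furnishes some $z\in H$ with $\varphi(z)=(\alpha,\beta)$, and this $z$ automatically lies in $\psi^{-1}(\alpha)$. Since $\varphi$ is a function, distinct fibre points of $\pi$ arise from distinct preimages $z$, so
\[
\#\pi^{-1}(\alpha)\leqslant \#\psi^{-1}(\alpha)\leqslant s,
\]
where the second inequality is Lemma~\ref{propmap-lem-projection from H to first n-1 coordinates has finite distinct fibres}. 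With this uniform fibre bound in place, Lemma~\ref{propmap-lem-finiteness by Weierstrass prep theorem} applied to $\tilde{h}_{n}$ delivers $\ord_{0}\ \tilde{h}_{n}(0,\ldots,0,w_{n})\leqslant s$, which is exactly the assertion.

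The only real obstacle I anticipate is arranging the fibre counting correctly: one must exploit both the surjectivity of $\varphi|_{H}$ onto $\{\tilde{h}_{n}=0\}$ (so as to lift every fibre point of $\pi$ to a fibre point of $\psi$) and the single-valuedness of $\varphi$ (so as not to inflate the count when lifting). Once this bookkeeping is done, everything else is an assembly of already-established facts.
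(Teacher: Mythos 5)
Your proposal is correct and follows essentially the same route as the paper: factor $\psi=\pi\circ\varphi|_{H}$, use surjectivity of $\varphi|_{H}$ onto $\varphi(H)$ to bound $\#\pi^{-1}(\alpha)$ by $\#\psi^{-1}(\alpha)\leqslant s$ via Lemma~\ref{propmap-lem-projection from H to first n-1 coordinates has finite distinct fibres}, and then conclude with Lemma~\ref{propmap-lem-finiteness by Weierstrass prep theorem}. Your explicit appeal to the proposition of paragraph~\ref{propmap-para-5} to secure $\tilde{h}_{n}(0,\dots,0,w_{n})\not\equiv 0$ is a point the paper leaves implicit, and is a welcome clarification.
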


\begin{proof}
Consider the map
\begin{eqnarray*}
H \stackrel{\varphi|_{H}}{\xrightarrow{\hspace*{1.5cm}}}  & \varphi(H) & \stackrel{\projsmall}{\xrightarrow{\hspace*{1.5cm}}}\mathbb{C}^{n-1}\\
(z_{1},\dots,z_{n})  \xmapsto{\hspace*{1.5cm}} & (h_{1}(z),\dots,h_{n-1}(z),z_{n})\\
& (w_{1},\dots,w_{n}) &\xmapsto{\hspace*{1.5cm}} (w_{1},\dots,w_{n-1}).
\end{eqnarray*}
By lemma \ref{propmap-lem-projection from H to first n-1 coordinates has finite distinct fibres}, there exists a neighbourhood $U$ of the origin $0\in\mathbb{C}^{n-1}$ such that for all $\alpha\in U$, there are at most $s$ distinct points in $\psi^{-1}(\alpha)=(\proj\circ\varphi|_{H})^{-1}(\alpha)$. Choose a generic point $\alpha\in\mathbb{C}^{n-1}$ as in the lemma \ref{propmap-lem-finiteness by Weierstrass prep theorem}. Therefore, above $\alpha$, there are $\ord_{0}\ \tilde{h}_{n}(0,w_{n})$ distinct fibre points in $\proj^{-1}(\alpha)$, and hence
\begin{eqnarray*}
&& \ord_{0}\ \tilde{h}_{n}(0,w_{n})\\
&=&
\text{number of distinct points in }\proj^{-1}(\alpha)\\
&\leqslant & 
\text{number of distinct points in }(\proj\circ\varphi|_{H})^{-1}(\alpha)\leqslant s.\qedhere
\end{eqnarray*}
\end{proof}

\section{Calculation of Explicit $\varepsilon$ in Dimension $2$ (Preliminaries)}
\subsubsection{}\label{calprelim-para-1} In this section we will use some of the results in the earlier sections to establish some preliminary results for the calculation of explicit $\varepsilon$ in the case of dimension $2$.

\subsubsection{}\label{calprelim-para-2} Let $F_{1}$,\dots,$F_{\NN}$ be holomorphic function germs in $\mathcal{O}_{\mathbb{C}^{2},0}$ vanishing at the origin such that the ideal they generate $\langle F_{1},\dots, F_{\NN}\rangle$ has finite intersection multiplicity with data $(p,q,s)$.

\subsection{Ideal Generated by Gradient and Generic Selection in Dimension $2$}

\subsubsection{}\label{calprelim-para-1-1} In $\mathbb{C}^{2}$, Proposition \ref{idealgrad-prop-ideal of gradient has finite intersection multiplicity} implies that
\[
\dim_{\mathbb{C}}\ 
\mathcal{O}_{\mathbb{C}^{2},0}
\bigg/
\left\langle
\frac{\partial F_{i}}{\partial z_{j}}:\ 1\leqslant i\leqslant \NN,\ 1\leqslant j\leqslant 2
\right\rangle
\leqslant
\binom{
8s+1}{
8s-1}.
\]
Moreover, if $\NN=2$, there is a better upper bound
\[
\dim_{\mathbb{C}}\ 
\mathcal{O}_{\mathbb{C}^{2},0}
\bigg/
\left\langle
\frac{\partial F_{i}}{\partial z_{j}}:\ 1\leqslant i,j\leqslant 2
\right\rangle
\leqslant
4s.
\]
\subsubsection{}\label{calprelim-para-1-2} Let $\tilde{h}_{2}$ be any holomorphic function germ in $\mathcal{O}_{\mathbb{C}^{2},0}$ with multiplicity $\tilde{m}_{2}$. Let 
\[
V(\tilde{h}_{2})
=
\bigcup_{k=1}^{\tilde{r}}Z_{k}
\]
be the irreducible decomposition of the pure $1$-dimensional analytic variety. By Proposition \ref{genselect-prop-main result}, there exist $\tilde{r}$ hyperplanes $H_{1}$,\dots,$H_{\tilde{r}}$ in $\mathbb{C}^{2\NN}$ such that for all 
\[
(\lambda_{1},\dots,\lambda_{\NN},
\theta_{1},\dots,\theta_{\NN})
\in 
\mathbb{C}^{2\NN}
-
\bigcup_{i=1}^{\tilde{r}}H_{i},
\]
there is an effective upper bound on the intersection multiplicity
\[
\dim_{\mathbb{C}}\ 
\mathcal{O}_{\mathbb{C}^{2},0}
\bigg/
\left\langle
\tilde{h}_{2},\ 
\sum_{j=1}^{\NN}
\lambda_{j}\frac{\partial F_{j}}{\partial z_{1}}
+
\theta_{j}\frac{\partial F_{j}}{\partial z_{2}}
\right\rangle
\leqslant
\tilde{m}_{2}
\binom{
8s+1}{
8s-1}.
\]

\subsubsection{}
\begin{Lemma}\label{calprelim-lem-generic selection for effectiveness}
Let $\tilde{h}_{2}$ be any holomorphic function germ in $\mathcal{O}_{\mathbb{C}^{2},0}$ that vanishes at the origin, whose multiplicity is $\tilde{m}_{2}$. Suppose that the vanishing locus $\{\tilde{h}_{2}=0\}$ is a union of $\tilde{r}$ irreducible components (not counting multiplicity). Then there exist $2\tilde{r}$ hyperplanes $H_{1}$,\dots,$H_{2\tilde{r}}$ in $\mathbb{C}^{\NN}$ so that whenever
\[
(c_{1},\dots,c_{\NN})
\in 
\mathbb{C}^{\NN}
-
\bigcup_{k=1}^{2\tilde{r}}H_{k}
\]
there are $\tilde{r}$ hyperplanes $\tilde{H}_{1}$,\dots,$\tilde{H}_{\tilde{r}}$ in $\mathbb{C}^{2}$ such that if 
\[
(\alpha,\gamma)
\in 
\mathbb{C}^{2}
-
\bigcup_{k=1}^{\tilde{r}}
\tilde{H}_{k},
\]
then it holds that 
\[
\dim_{\mathbb{C}}\ 
\mathcal{O}_{\mathbb{C}^{2},0}
\bigg/
\left\langle
\tilde{h}_{2},\ 
\sum_{j=1}^{\NN}
c_{j}\alpha
\frac{\partial F_{j}}{\partial z_{1}}
+
c_{j}\gamma
\frac{\partial F_{j}}{\partial z_{2}}
\right\rangle
\leqslant
\tilde{m}_{2}
\left(
\begin{matrix}
8s+1\\
8s-1
\end{matrix}
\right).
\]
\end{Lemma}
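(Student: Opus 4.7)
The plan is to reduce to the bound from paragraph \ref{calprelim-para-1-2} by viewing the substitution $(\lambda_{j},\theta_{j})=(c_{j}\alpha,c_{j}\gamma)$ as a parametrization and arguing that a generic choice of $(c_{1},\ldots,c_{\NN})\in\mathbb{C}^{\NN}$ followed by a generic choice of $(\alpha,\gamma)\in\mathbb{C}^{2}$ produces a $2\NN$-tuple in $\mathbb{C}^{2\NN}$ outside the exceptional hyperplane arrangement provided there.

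First I would invoke paragraph \ref{calprelim-para-1-2} to obtain $\tilde{r}$ hyperplanes $H'_{1},\ldots,H'_{\tilde{r}}\subseteq\mathbb{C}^{2\NN}$, each defined by a homogeneous linear equation
\[
L_{i}(\lambda,\theta)=\sum_{j=1}^{\NN}\bigl(a^{(i)}_{j}\lambda_{j}+b^{(i)}_{j}\theta_{j}\bigr)=0,
\qquad 1\leqslant i\leqslant\tilde{r},
\]
such that $(\lambda,\theta)\notin\bigcup_{i}H'_{i}$ implies the bound $\tilde{m}_{2}\binom{8s+1}{8s-1}$. That the $H'_{i}$ are linear through the origin is inherited from Proposition~\ref{genselect-prop-main result}, whose exceptional set consists of zero loci of linear forms in the coefficients of the ambient linear combination.

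Substituting $\lambda_{j}=c_{j}\alpha$, $\theta_{j}=c_{j}\gamma$ turns each $L_{i}$ into the bilinear expression
\[
L_{i}(c\alpha,c\gamma)=\alpha\,A_{i}(c)+\gamma\,B_{i}(c),\qquad A_{i}(c):=\sum_{j=1}^{\NN}a^{(i)}_{j}c_{j},\quad B_{i}(c):=\sum_{j=1}^{\NN}b^{(i)}_{j}c_{j}.
\]
I would then define the $2\tilde{r}$ hyperplanes $H_{2i-1}:=\{A_{i}=0\}$ and $H_{2i}:=\{B_{i}=0\}$ in $\mathbb{C}^{\NN}$, for $1\leqslant i\leqslant\tilde{r}$. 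If $(c_{1},\ldots,c_{\NN})\notin\bigcup_{k=1}^{2\tilde{r}}H_{k}$, then $A_{i}(c)\neq 0$ and $B_{i}(c)\neq 0$ for every $i$, so $(\alpha,\gamma)\mapsto\alpha A_{i}(c)+\gamma B_{i}(c)$ is a nontrivial linear form whose zero locus is a hyperplane $\tilde{H}_{i}\subseteq\mathbb{C}^{2}$. Choosing $(\alpha,\gamma)\notin\bigcup_{i=1}^{\tilde{r}}\tilde{H}_{i}$ then forces $L_{i}(c\alpha,c\gamma)\neq 0$ for all $i$, so the bound of paragraph \ref{calprelim-para-1-2} applies verbatim with $\lambda_{j}=c_{j}\alpha$, $\theta_{j}=c_{j}\gamma$.

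There is no real obstacle: the whole argument rests on the bilinearity of the substitution $(c,\alpha,\gamma)\mapsto(c\alpha,c\gamma)$, which makes the pullback of each linear form $L_{i}$ a bilinear form on $\mathbb{C}^{\NN}\times\mathbb{C}^{2}$, vanishing identically in $(\alpha,\gamma)$ only when both $A_{i}(c)=0$ and $B_{i}(c)=0$. The minor inefficiency of removing $2\tilde{r}$ hyperplanes in $\mathbb{C}^{\NN}$ instead of the codimension-two loci $\{A_{i}=B_{i}=0\}$ is simply the price of presenting the exceptional set as a hyperplane arrangement, as demanded by the statement.
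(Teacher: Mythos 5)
Your proposal is correct and follows essentially the same route as the paper's proof: pull back the $\tilde{r}$ hyperplanes in $\mathbb{C}^{2\NN}$ supplied by Proposition~\ref{genselect-prop-main result} under the substitution $(\lambda_{j},\theta_{j})=(c_{j}\alpha,c_{j}\gamma)$, split each pulled-back linear form as $\alpha A_{i}(c)+\gamma B_{i}(c)$, and take the $2\tilde{r}$ hyperplanes $\{A_{i}=0\}$, $\{B_{i}=0\}$ in $\mathbb{C}^{\NN}$ followed by the $\tilde{r}$ hyperplanes $\{\alpha A_{i}(c)+\gamma B_{i}(c)=0\}$ in $\mathbb{C}^{2}$. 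This matches the paper's argument step for step, including the (harmless) over-counting of the exceptional locus as $2\tilde{r}$ hyperplanes rather than the codimension-two sets $\{A_{i}=B_{i}=0\}$.
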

\begin{proof}
By paragraph \ref{calprelim-para-1-1}, the ideal 
\[
\left\langle 
\frac{\partial F_{i}}{\partial z_{j}}:\ 
1\leqslant i\leqslant\NN,\ 
1\leqslant j\leqslant 2
\right\rangle
\]
has finite intersection multiplicity with data $(p',q',s')$.

By Proposition \ref{genselect-prop-main result}, there exist $\tilde{r}$ hyperplanes in $\mathbb{C}^{2\NN}$ of the form
\[
H_{l}'
=
\left\{
(v_{1},\dots,v_{\NN},w_{1},\dots,w_{\NN})\in\mathbb{C}^{2\NN}:\ 
\sum_{k=1}^{\NN}
\sigma_{lk}v_{k}
+
\mu_{lk}w_{k}=0
\right\}
\eqno
{\scriptstyle{(1\,\leqslant\, l\,\leqslant\, \tilde{r}),}}
\]
such that if $(v_{1},\dots,v_{\NN},w_{1},\dots,w_{\NN})\in\mathbb{C}^{2\NN}
-\cup_{l=1}^{\tilde{r}}H_{l}'$, then 
\[
\dim_{\mathbb{C}}\ 
\mathcal{O}_{\mathbb{C}^{2},0}
\bigg/
\left\langle
\tilde{h}_{2},\ 
\sum_{k=1}^{2\NN}
v_{k}\frac{\partial F_{k}}{\partial z_{1}}
+
w_{k}\frac{\partial F_{k}}{\partial z_{2}}
\right\rangle
\leqslant 
\tilde{m}_{2}
\left(
\begin{matrix}
8s+1\\
8s-1
\end{matrix}
\right).
\]
To conclude the proof, it suffices to choose $(c_{1}\alpha,\dots,c_{\NN}\alpha,c_{1}\gamma,\dots,c_{\NN}\gamma)\in 
\mathbb{C}^{2\NN}
-
\cup_{l=1}^{\tilde{r}}H_{l}'$, or equivalently for every $1\leqslant l\leqslant \tilde{r}$,
\begin{eqnarray}\label{calprelim-eqn-1}
\sum_{k=1}^{\NN}
\sigma_{lk}c_{k}\alpha
+
\mu_{lk}c_{k}\gamma
\neq 0.
\end{eqnarray}
To this aim, write
\begin{eqnarray}\label{calprelim-eqn-2}
\sum_{k=1}^{\NN}
\sigma_{lk}c_{k}\alpha
+
\mu_{lk}c_{k}\gamma
=
\left(
\sum_{k=1}^{\NN}
\sigma_{lk}c_{k}
\right)\alpha
+
\left(
\sum_{k=1}^{\NN}
\mu_{lk}c_{k}
\right)\gamma.
\end{eqnarray}
If  
\begin{eqnarray*}
(c_{1},\dots,c_{\NN})
\in 
\mathbb{C}^{\NN}
&-&
\bigcup_{l=1}^{\tilde{r}}
\left\{
(d_{1},\dots,d_{\NN})\in\mathbb{C}^{\NN}:\ 
\sum_{k=1}^{\NN}
\sigma_{lk}d_{k}=0
\right\}\\
&-&
\bigcup_{l=1}^{\tilde{r}}
\left\{
(d_{1},\dots,d_{\NN})\in\mathbb{C}^{\NN}:\ 
\sum_{k=1}^{\NN}
\mu_{lk}d_{k}=0
\right\},
\end{eqnarray*}
which is in a complement of $2\tilde{r}$ hyperplanes, the coefficients of $\alpha$ and $\gamma$ in the equation \eqref{calprelim-eqn-2} do not vanish. Once $(c_{1},\dots,c_{\NN})$ is chosen, if
\[
(\alpha,\gamma)
\in 
\mathbb{C}^{2}
-
\bigcup_{l=1}^{\tilde{r}}
\left\{
\left(
\sum_{k=1}^{\NN}
\sigma_{lk}c_{k}
\right)\alpha
+
\left(
\sum_{k=1}^{\NN}
\mu_{lk}c_{k}
\right)\gamma
=0
\right\},
\]
which lies in the complement of $\tilde{r}$ hyperplanes in $\mathbb{C}^{2}$, then equation \eqref{calprelim-eqn-1} holds. Hence the proof is complete.
\end{proof}

\subsubsection{}
\begin{Proposition}\label{calprelim-prop-prelim for calculations-main results}
Let $(z_{1},z_{2})\in\mathbb{C}^{2}$ be holomorphic coordinates in $\mathbb{C}^{2}$. Let $\tilde{h}_{2}$ be a holomorphic function germ in $\mathcal{O}_{\mathbb{C}^{2},0}$ vanishing at the origin with multiplicity $\tilde{m}_{2}$, and  suppose that its vanishing locus $\{\tilde{h}_{2}=0\}$ has $\tilde{r}$ irreducible components (not counting multiplicity).

Let $F_{1}$,\dots,$F_{\NN}$ be holomorphic function germs which generate an ideal $\langle F_{1},\dots, F_{\NN}\rangle$ having finite intersection multiplicity with data $(p,q,s)$. 

Let 
\begin{eqnarray*}
\left(
\begin{matrix}
z_{1}\\
z_{2}
\end{matrix}
\right)
=
\left(
\begin{matrix}
\alpha & \beta\\
\gamma & \delta
\end{matrix}
\right)
\left(
\begin{matrix}
w_{1}\\
w_{2}
\end{matrix}
\right)
\end{eqnarray*}
be an invertible linear change of coordinates. Then there are $3\tilde{r}$ hyperplanes $H_{1}$,\dots,$H_{3\tilde{r}}$ in $\mathbb{C}^{\NN}$ such that for each $(c_{1},\dots,c_{\NN})\in\mathbb{C}^{\NN}
-\cup_{k=1}^{3\tilde{r}}H_{k}$,
there exist $\tilde{r}$ hyperplanes $\tilde{H}_{1}$,\dots,$\tilde{H}_{\tilde{r}}$ and a hypersurface defined by a homogeneous polynomial $\{P=0\}$ such that whenever
\[
(\alpha,\gamma)
\in 
\mathbb{C}^{2}
-
\bigcup_{k=1}^{\tilde{r}}\tilde{H}_{k}
-
\{P=0\},
\]
 the linear combination 
\[
h_{1}(z_{1},z_{2})
=
\sum_{j=1}^{\NN}c_{j}F_{j}(z_{1},z_{2})
\]
will satisfy the following conditions:

\smallskip\noindent{\bf (i)}
the intersection multiplicity of the  ideal $\langle h_{1},\tilde{h}_{2}\rangle$ has an effective bound:
\[
\dim_{\mathbb{C}}\ 
\mathcal{O}_{\mathbb{C}^{2},0}
\big/
\langle h_{1},\tilde{h}_{2}\rangle
\leqslant\tilde{m}_{2}p
\leqslant \tilde{m}_{2}s;
\]
\smallskip\noindent{\bf (ii)}
in the new coordinates $(w_{1},w_{2})$,
\begin{eqnarray*}
&& \dim_{\mathbb{C}}
\mathcal{O}_{\mathbb{C}^{2},0}
\bigg/
\left\langle
\tilde{h}_{2}(\alpha w_{1}+\beta w_{2},\gamma w_{1}+\delta w_{2}),\ 
\frac{\partial h_{1}(\alpha w_{1}+\beta w_{2},\gamma w_{1}+\delta w_{2})}{\partial w_{1}}
\right\rangle\\
&\leqslant &
\tilde{m}_{2}
\binom{
8s+1}{
8s-1}; 
\end{eqnarray*}

\smallskip\noindent{\bf (iii)}
the holomorphic map induced from the change of coordinates
\begin{eqnarray*}
\varphi:
\mathbb{C}^{2} &\longrightarrow & 
\mathbb{C}^{2}\\
(w_{1},w_{2}) &\longmapsto & 
\big(h_{1}(\alpha w_{1}+\beta w_{2},
\gamma w_{1}+\delta w_{2}),w_{2}\big)
\end{eqnarray*}
is a covering map with finite fibres.
\end{Proposition}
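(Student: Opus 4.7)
The plan is to construct $(c_{1},\ldots,c_{\NN})$ and $(\alpha,\gamma)$ satisfying all three conditions by imposing successive generic restrictions, with the hyperplane count organized as follows: condition (i) will cost $\tilde{r}$ hyperplanes in $\mathbb{C}^{\NN}$; condition (ii) will cost a further $2\tilde{r}$ hyperplanes in $\mathbb{C}^{\NN}$ together with $\tilde{r}$ hyperplanes in $\mathbb{C}^{2}$; and condition (iii) will add a single homogeneous hypersurface $\{P=0\}$ in $\mathbb{C}^{2}$. This sums to $3\tilde{r}$ hyperplanes in $\mathbb{C}^{\NN}$, matching the statement exactly.

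For (i), I would apply Proposition~\ref{genselect-prop-main result} in dimension $n=2$ with $q=1$ and $f_{1}=\tilde{h}_{2}$: the variety $V(\tilde{h}_{2})$ is pure $1$-dimensional of multiplicity $m=\tilde{m}_{2}$, and no auxiliary linear functions are needed since $n-q-1=0$. The proposition then produces $\tilde{r}$ hyperplanes $H^{(1)}_{1},\ldots,H^{(1)}_{\tilde{r}}$ in $\mathbb{C}^{\NN}$ such that avoiding them yields
\[
\dim_{\mathbb{C}}\ \mathcal{O}_{\mathbb{C}^{2},0}\big/\langle h_{1},\tilde{h}_{2}\rangle
\leqslant \tilde{m}_{2}\,p
\leqslant \tilde{m}_{2}\,s,
\]
using $p\leqslant s$ from Proposition~\ref{LAG-prop-Relations between the intersection invariants}. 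For (ii), I invoke the chain rule applied to $z_{1}=\alpha w_{1}+\beta w_{2}$, $z_{2}=\gamma w_{1}+\delta w_{2}$, giving
\[
\frac{\partial h_{1}(\alpha w_{1}+\beta w_{2},\gamma w_{1}+\delta w_{2})}{\partial w_{1}}
=
\sum_{j=1}^{\NN} c_{j}\left(\alpha\frac{\partial F_{j}}{\partial z_{1}}+\gamma\frac{\partial F_{j}}{\partial z_{2}}\right).
\]
Since the invertible linear change of coordinates is a biholomorphism and hence induces an isomorphism of local rings preserving intersection multiplicity, the bound of (ii) in the $(w_{1},w_{2})$-coordinates is equivalent to the same bound in the original $(z_{1},z_{2})$-coordinates, which is precisely the conclusion of Lemma~\ref{calprelim-lem-generic selection for effectiveness}. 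That lemma provides $2\tilde{r}$ additional hyperplanes $H^{(2)}_{1},\ldots,H^{(2)}_{2\tilde{r}}$ in $\mathbb{C}^{\NN}$ and then, once $(c_{j})$ is fixed outside them, $\tilde{r}$ hyperplanes $\tilde{H}_{1},\ldots,\tilde{H}_{\tilde{r}}$ in $\mathbb{C}^{2}$ outside of which (ii) holds.

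For (iii), observe that the fiber germ of $\varphi$ above the origin, obtained by setting $w_{2}=0$, is $\varphi^{-1}(0)=\{(w_{1},0):h_{1}(\alpha w_{1},\gamma w_{1})=0\}$. The map $\varphi$ will be a finite (branched) covering near $0$ exactly when this germ reduces to $\{0\}$, equivalently when $h_{1}(\alpha w_{1},\gamma w_{1})\not\equiv 0$ as a function of $w_{1}$. Since the genericity in (i) forces $h_{1}\not\equiv 0$, I would decompose $h_{1}=\sum_{k\geqslant \mult_{0}h_{1}}P_{k}$ into homogeneous components and set $P:=P_{\mult_{0}h_{1}}$, which is a nonzero homogeneous polynomial in $(z_{1},z_{2})$ of degree $\mult_{0}h_{1}$. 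Then
\[
h_{1}(\alpha w_{1},\gamma w_{1})=w_{1}^{\mult_{0}h_{1}}\big(P(\alpha,\gamma)+O(w_{1})\big),
\]
so the single condition $P(\alpha,\gamma)\neq 0$ suffices to make $\varphi^{-1}(0)=\{0\}$ as a set-germ; the finite open covering property then follows by standard complex geometry for equidimensional holomorphic map-germs with discrete fiber. The delicate point I anticipate is verifying that these genericity conditions on $(c_{j})$ and $(\alpha,\gamma)$ compose consistently: specifically, $P$ depends on $(c_{j})$ alone and not on $(\alpha,\gamma)$, so $\{P=0\}$ is indeed a single fixed homogeneous hypersurface in $\mathbb{C}^{2}$ once $(c_{j})$ has been chosen. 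Choosing $(c_{1},\ldots,c_{\NN})$ outside $\bigcup_{k=1}^{\tilde{r}}H^{(1)}_{k}\cup\bigcup_{k=1}^{2\tilde{r}}H^{(2)}_{k}$ and $(\alpha,\gamma)$ outside $\bigcup_{k=1}^{\tilde{r}}\tilde{H}_{k}\cup\{P=0\}$ then realizes (i), (ii), (iii) simultaneously.
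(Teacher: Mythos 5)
Your proposal is correct and follows essentially the same route as the paper: Proposition~\ref{genselect-prop-main result} for the bound in (i), the chain rule plus Lemma~\ref{calprelim-lem-generic selection for effectiveness} for (ii), and the non-vanishing of the lowest-degree homogeneous component $P_{\mult_{0}h_{1}}$ of $h_{1}$ at $(\alpha,\gamma)$ for (iii), with the identical count of $\tilde{r}+2\tilde{r}=3\tilde{r}$ hyperplanes in $\mathbb{C}^{\NN}$ and $\tilde{r}$ hyperplanes plus one homogeneous hypersurface in $\mathbb{C}^{2}$. Your remark that $P$ depends only on $(c_{1},\dots,c_{\NN})$ and not on $(\alpha,\gamma)$ is a point the paper leaves implicit, and it is correct.
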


\begin{proof}
{\bf (i)} By Proposition \ref{genselect-prop-main result}, there exist $\tilde{r}$ hyperplanes $H_{1}$,\dots,$H_{\tilde{r}}$ in $\mathbb{C}^{\NN}$ so that for all 
\[
(c_{1},\dots,c_{\NN})
\in 
\mathbb{C}^{\NN}
-
\bigcup_{k=1}^{\tilde{r}}
H_{k},
\]
one has (in variables $(z_{1},z_{2})$)
\[
\dim_{\mathbb{C}}\ 
\mathcal{O}_{\mathbb{C}^{2},0}
\bigg/
\left\langle
\tilde{h}_{2},\ 
\sum_{j=1}^{\NN}c_{j}F_{j}
\right\rangle
\leqslant 
\tilde{m}_{2}p
\leqslant 
\tilde{m}_{2}s.
\]
This satisfies the first condition, which remains unchanged even after a linear change of coordinates $(z_{1},z_{2})\leftrightarrow (w_{1},w_{2})$.

{\bf (ii)} After a change of variables,
\begin{eqnarray*}
\frac{\partial h_{1}(\alpha w_{1}+\beta w_{2},
\gamma w_{1}+\delta w_{2})}{\partial w_{1}}
&=&
\frac{\partial h_{1}(z_{1},z_{2})}{\partial z_{1}}\frac{\partial z_{1}}{\partial w_{1}}
+
\frac{\partial h_{1}(z_{1},z_{2})}{\partial z_{2}}\frac{\partial z_{2}}{\partial w_{1}}\\
&=&
\alpha \frac{\partial h_{1}(z_{1},z_{2})}{\partial z_{1}}
+
\gamma
\frac{\partial h_{1}(z_{1},z_{2})}{\partial z_{2}}\\
&=&
\sum_{j=1}^{\NN}
c_{j}\alpha \frac{\partial F_{j}(z_{1},z_{2})}{\partial z_{1}}
+
c_{j}\gamma \frac{\partial F_{j}(z_{1},z_{2})}{\partial z_{2}}.
\end{eqnarray*}
By Lemma \ref{calprelim-lem-generic selection for effectiveness}, there exist $2\tilde{r}$ hyperplanes $H_{\tilde{r}+1}$,\dots,$H_{3\tilde{r}}$ in $\mathbb{C}^{\NN}$ such that whenever
\[
(c_{1},\dots,c_{\NN})\in 
\mathbb{C}^{\NN}-
\bigcup_{k=\tilde{r}+1}^{3\tilde{r}}
H_{k},
\]
there are $\tilde{r}$ hyperplanes $\tilde{H}_{1}$,\dots, $\tilde{H}_{\tilde{r}}$ in $\mathbb{C}^{2}$ so that if 
\[
(\alpha,\gamma)
\in
\mathbb{C}^{2}-
\bigcup_{k=1}^{\tilde{r}}
\tilde{H}_{k},
\]
then
\[
\dim_{\mathbb{C}}\ 
\mathcal{O}_{\mathbb{C}^{2},0}
\bigg/
\left\langle
\tilde{h}_{2}(z_{1},z_{2}),\ 
\sum_{j=1}^{\NN}
c_{j}\alpha
\frac{\partial F_{j}(z_{1},z_{2})}{\partial z_{1}}
+
c_{j}\gamma
\frac{\partial F_{j}(z_{1},z_{2})}{\partial z_{2}}
\right\rangle
\leqslant
\tilde{m}_{2}
\binom{
8s+1}{
8s-1},
\]
or in other words,
\[
\dim_{\mathbb{C}}\ 
\mathcal{O}_{\mathbb{C}^{2},0}
\bigg/
\left\langle
\tilde{h}_{2}(\alpha w_{1}+\beta w_{2},\gamma w_{1}+\delta w_{2}),\ 
\frac{\partial h_{1}(\alpha w_{1}+\beta w_{2},\gamma w_{1}+\delta w_{2})}{\partial w_{1}}
\right\rangle
\leqslant
\tilde{m}_{2}
\binom{
8s+1}{
8s-1},
\]
and hence the second condition is attained. 

{\bf (iii)} For the last condition, in order for $\varphi$ to be a covering map with finite fibres, it suffices to find $(\alpha,\gamma)\in\mathbb{C}^{2}$ so that the holomorphic function of one variable
\[
h_{1}(\alpha w_{1}+\beta w_{2},
\gamma w_{1}+\delta w_{2})|_{\{w_{2}=0\}}
=
h_{1}(\alpha w_{1},\gamma w_{1})
\]
has a finite order of vanishing at $w_{1}=0$. To this effect, $h_{1}(z_{1},z_{2})$ may be written as an infinite sum 
\[
h_{1}(z_{1},z_{2})
=
P_{m_{1}}
+
\sum_{k\geqslant m_{1}+1}P_{k}
\]
of homogeneous polynomials $P_{k}$ of degree $k$, with $m_{1}=\mult_{0}\ h_{1}$. Hence,
\begin{eqnarray*}
h_{1}(\alpha w_{1},\gamma w_{1})
&=&
P_{m_{1}}(\alpha w_{1},\gamma w_{1})+O(w_{1}^{m_{1}+1})\\
&=&
w_{1}^{m_{1}}
P_{m}(\alpha,\gamma)+O(w_{1}^{m_{1}+1}).
\end{eqnarray*}
If $(\alpha,\gamma)\in \mathbb{C}^{2}-\{P_{m_{1}}=0\}$, then $h_{1}(\alpha w_{1},\gamma w_{1})\not\equiv 0$ and hence $\varphi$ defines a ramified $m_{1}$-sheeted analytic covering.

In summary, there exist $3\tilde{r}$ hyperplanes $H_{1}$,\dots,$H_{3\tilde{r}}$ in $\mathbb{C}^{\NN}$ so that for every 
\[
(c_{1},\dots,c_{\NN})\in 
\mathbb{C}^{\NN}
-
\bigcup_{k=1}^{3\tilde{r}}H_{k},
\]
there are $\tilde{r}$ hyperplanes $\tilde{H}_{1}$,\dots,$\tilde{H}_{\tilde{r}}$ and a hypersurface $\{P_{m_{1}}=0\}$ in $\mathbb{C}^{2}$ such that whenever 
\begin{eqnarray*}
\left(
\begin{matrix}
z_{1}\\
z_{2}
\end{matrix}
\right)
=
\left(
\begin{matrix}
\alpha & \beta\\
\gamma & \delta
\end{matrix}
\right)
\left(
\begin{matrix}
w_{1}\\
w_{2}
\end{matrix}
\right)
\end{eqnarray*}
is an invertible linear change of coordinate satisfying
\[
(\alpha,\gamma)\in 
\mathbb{C}^{2}
-
\bigcup_{k=1}^{\tilde{r}}\tilde{H}_{k}
-
\{P_{m_{1}}=0\},
\]
the three conditions {\bf (i)}, {\bf (ii)}, {\bf (iii)} are satisfied.
\end{proof}

\section{Explicit Calculation of $\varepsilon$ in Dimension $2$}

\subsubsection{}\label{cal-para-1} As before, we work in $\mathbb{C}^{2}$. Let $F_{1}$,\dots,$F_{\NN}$ be holomorphic function germs in $\mathcal{O}_{\mathbb{C}^{2},0}$ vanishing at the origin whose ideal $\mathcal{I}_{F}=\langle F_{1},\dots, F_{\NN}\rangle$ has finite intersection multiplicity with data $(p,q,s)$.

\subsubsection{}\label{cal-para-2} By \cite[p~1182]{Siu-2010}, one has for all $\phi\in\mathcal{D}_{0,1}(\Omega)$ with compact support that
\[
\||dF_{j}\cdot \phi|\|_{\frac{1}{4}}^{2}\lesssim Q(\phi,\phi)
\eqno
{\scriptstyle{(1\,\leqslant\, j\,\leqslant\, \NN).}}
\]

\subsubsection{}\label{cal-para-3} For any two vectors $(\lambda_{1},\dots,\lambda_{\NN})$, $(\mu_{1},\dots,\mu_{\NN})$ in $\mathbb{C}^{\NN}$, if 
\[
A=
\sum_{i=1}^{\NN}
\lambda_{i}F_{i}
\qquad
\text{and}
\qquad
B=
\sum_{i=1}^{\NN}
\mu_{i}F_{i},
\]
then by \ref{Kohn-Property}(iii)
\[
\||\Jac(A,B)\phi|\|_{\frac{1}{4}}^{2}
\lesssim
Q(\phi,\phi).
\]
\subsubsection{}\label{cal-para-4} By Corollary \ref{genselect-cor-main result}, there exist vectors $(\lambda_{1},\dots,\lambda_{\NN})$ and $(\mu_{1},\dots,\mu_{\NN})$ such that 
\[
\dim_{\mathbb{C}}\ 
\mathcal{O}_{\mathbb{C}^{2},0}
\big/
\langle A,B\rangle
\leqslant
4s^{2}.
\]
By Corollary \ref{LAG-cor-multiplicity of Jacobian is always lesser},
\[
\mult_{0}\ \Jac(A,B)
\leqslant 
4s^{2}-1.
\]

\subsubsection{}\label{cal-para-5} Write
\begin{eqnarray}\label{cal-eqn-jac}
\Jac(A,B)
=
f_{1}^{\alpha_{1}}
\cdots
f_{\tilde{r}}^{\alpha_{\tilde{r}}}
\end{eqnarray}
as a product of prime elements, and let $\alpha:=\max\{\alpha_{1},\dots,\alpha_{\tilde{r}}\}$. The holomorphic function
\[
\tilde{h}_{2}:=
f_{1}\cdots f_{\tilde{r}}
\]
is also a subelliptic multiplier since
\[
\tilde{h}_{2}^{\alpha}
=
f_{1}^{\alpha}\cdots f_{\tilde{r}}^{\alpha}
=
f_{1}^{\alpha-\alpha_{1}}
\cdots
f_{\tilde{r}}^{\alpha-\alpha_{\tilde{r}}}
\Jac(A,B)
\]
is a multiple of a subelliptic multiplier. Consequently, by radical property of subelliptic multipliers Proposition \ref{Kohn-Property}(i),
\[
\||\tilde{h}_{2}\phi|\|_{\frac{1}{4\alpha}}^{2}
\lesssim
Q(\phi,\phi).
\]
Moreover, by equation \eqref{cal-eqn-jac},
\[
\mult_{0}\ \Jac(A,B)
=
\sum_{i=1}^{\tilde{r}}\ 
\alpha_{i}\ \mult_{0}\ (f_{i})
\geqslant \alpha.
\]
Hence
\[
\frac{1}{4\alpha}
\geqslant
\frac{1}{4\ \mult_{0}\Jac(A,B)}
\geqslant 
\frac{1}{4(4s^{2}-1)},
\]
and 
\[
\||\tilde{h}_{2}\phi|\|_{\frac{1}{4(4s^{2}-1)}}^{2}
\lesssim
\||\tilde{h}_{2}\phi|\|_{\frac{1}{4\alpha}}^{2}
\lesssim
Q(\phi,\phi).
\]

\subsubsection{}\label{cal-para-6} As a remark,
\[
\mult_{0}\ \tilde{h}_{2}
=
\sum_{i=1}^{\tilde{r}}
\mult_{0}\ f_{i}
\leqslant
\sum_{i=1}^{\tilde{r}}
\alpha_{i}\mult_{0}\ f_{i}
=
\mult_{0}\ \Jac(A,B)
\leqslant
4s^{2}-1.
\]

\subsubsection{}\label{cal-para-7} By Proposition \ref{calprelim-prop-prelim for calculations-main results}, there exists $(c_{1},\dots,c_{\NN})\in\mathbb{C}^{\NN}$ and a linear change of coordinate $(z_{1},z_{2})\mapsto (w_{1},w_{2})$ via
\[
\left(
\begin{matrix}
z_{1}\\
z_{2}
\end{matrix}
\right)
=
\left(
\begin{matrix}
\alpha & \beta\\
\gamma & \delta
\end{matrix}
\right)
\left(
\begin{matrix}
w_{1}\\
w_{2}
\end{matrix}
\right)
\]
such that if $h_{1}=\sum_{k=1}^{\NN}c_{k}F_{k}$, one has 

\smallskip\noindent{\bf (i)}
\begin{eqnarray*}
&& \dim_{\mathbb{C}}\ 
\mathcal{O}_{\mathbb{C}^{2},0}
\big/
\langle h_{1}(\alpha w_{1}+\beta w_{2},\gamma w_{1}+\delta w_{2}),\tilde{h}_{2}(\alpha w_{1}+\beta w_{2},\gamma w_{1}+\delta w_{2})\rangle\\
&\leqslant & 
(\mult_{0}\ \tilde{h}_{2})s
\leqslant
(4s^{2}-1)s;
\end{eqnarray*}

\smallskip\noindent{\bf (ii)}
\begin{eqnarray*}
&& \dim_{\mathbb{C}}\ 
\mathcal{O}_{\mathbb{C}^{2},0}
\bigg/
\left\langle h_{1}(\alpha w_{1}+\beta w_{2},\gamma w_{1}+\delta w_{2}),\frac{\partial h_{1}(\alpha w_{1}+\beta w_{2},\gamma w_{1}+\delta w_{2})}{\partial w_{1}}\right\rangle\\
&\leqslant  &
(\mult_{0}\ \tilde{h}_{2})
{\binom{8s+1}{8s-1}}; 
\end{eqnarray*}

\smallskip\noindent{\bf (iii)}
if we let $(\mathbb{C}^{2},(w_{1},w_{1}))$ [resp. $(\mathbb{C}^{2},(x,y))$] denote $\mathbb{C}^{2}$ with coordinate system $(w_{1},w_{2})$ [resp. $(x,y)$], the holomorphic map 
\begin{eqnarray*}
\varphi:(\mathbb{C}^{2},(w_{1},w_{2})) &\longrightarrow &
(\mathbb{C}^{2},(x,y))\\
(w_{1},w_{2})
&\longmapsto &
(h_{1}(\alpha w_{1}+\beta w_{2},\gamma w_{1}+\delta w_{2}),w_{2})
\end{eqnarray*}
defines a ramified $\ord_{w_{1}=0}\ \tilde{h}_{1}(\alpha w_{1},\beta w_{1})$-cover over $\mathbb{C}^{2}$, which is therefore open and proper with finite fibres.

\subsubsection{}\label{cal-para-8} Since $h_{1}$ vanishes at the origin, by Lemma \ref{LAG-lem-if h(0)=0 then h^s is in the ideal},
\begin{eqnarray*}
&& h_{1}(\alpha w_{1}+\beta w_{2},\gamma w_{1}+\delta w_{2})^{
(4s^{2}-1)
{\binom{8s+1}{8s-1}}
}\\
&\in & 
\left\langle
\frac{\partial h_{1}(\alpha w_{1}+\beta w_{2},\gamma w_{1}+\delta w_{2})}{\partial z_{1}},
\tilde{h}_{2}(\alpha w_{1}+\beta w_{2},\gamma w_{1}+\delta w_{2})
\right\rangle.
\end{eqnarray*}

\subsubsection{}\label{cal-para-9} Let $C_{2}:=\{\tilde{h}_{2}(\alpha w_{1}+\beta w_{2},\gamma w_{1}+\delta w_{2})=0\}$ be the reduced curve. Since $\varphi$ is a proper map, by Remmert's proper mapping theorem, the image $\tilde{C}_{2}:=\varphi(C_{2})$ is an analytic set of dimension $1$.
 There exists an analytic function $h_{2}$ on $\mathbb{C}^{2}$ such that $\varphi(C_{2})=\{h_{2}(x,y)=0\}$. By Proposition \ref{propmap-prop-main result}, $\lambda:=\ord_{0}\ h_{2}(0,y)\leqslant (4s^{2}-1)s$. Hence, by Weierstrass' preparation theorem, there exist a unit $u(x,y)$, and holomorphic functions $a_{1}(x)$,\ \dots,\ $a_{(4s^{2}-1)s-1}(x)$ that vanish at $x=0$ such that $h_{2}(x,y)$ may be expressed as a Weierstrass polynomial
\[
h_{2}(x,y)
=
u(x,y)
\left(
y^{\lambda}
+
\sum_{k=0}^{\lambda-1}
a_{j}(x)y^{j}
\right)
\eqno
{\scriptstyle{\lambda\leqslant (4s^{2}-1)s}}.
\]

\subsubsection{}\label{cal-para-10} The holomorphic function $h_{2}(h_{1}(\alpha w_{1}+\beta w_{2},\gamma w_{1}+\delta w_{2}),w_{2})$ is also a subelliptic multiplier. More precisely, $h_{2}(h_{1}(\alpha w_{1}+\beta w_{2},\gamma w_{1}+\delta w_{2}),w_{2})$ is a multiple of $\tilde{h}_{2}(\alpha w_{1}+\beta w_{2},\gamma w_{1}+\delta w_{2})$ which is a subelliptic multiplier by paragraph  \ref{cal-para-5}. This follows from the fact (which will be explained below) that $V(\tilde{h}_{2})\subseteq V(h_{2}(h_{1},w_{2}))$ and hence by the  Nullstellensatz,
\[
\langle h_{2}(h_{1},w_{2})\rangle
\subseteq
\sqrt{\langle h_{2}(h_{1},w_{2})\rangle}
\subseteq
\sqrt{\langle \tilde{h}_{2}\rangle}
=
\langle \tilde{h}_{2}\rangle,
\]
where the equality follows from the fact that $\tilde{h}_{2}$ is reduced. 

Now to show that $V(\tilde{h}_{2}(\alpha w_{1}+\beta w_{2},\gamma w_{1}+\delta w_{2}))\subseteq V(h_{2}(h_{1}(\alpha w_{1}+\beta w_{2},\gamma w_{1}+\delta w_{2}),w_{2}))$, if $(\sigma,\mu)\in\mathbb{C}^{2}$ satisfies $\tilde{h}_{2}(\alpha \sigma+\beta \mu,\gamma \sigma+\delta \mu)=0$, then 
\[
\varphi(\sigma,\mu)
=(h_{1}(\alpha \sigma+\beta \mu,\gamma \sigma+\delta \mu),\mu)\in\{h_{2}(x,y)=0\}.
\]
Hence
\[
0=h_{2}(\varphi(\sigma,\mu))
=
h_{2}(h_{1}(\alpha \sigma+\beta \mu,\gamma \sigma+\delta \mu),\mu),
\]
from which we have proved the set inclusion. Consequently,
\[
\||h_{2}(h_{1}(\alpha w_{1}+\beta w_{2},\gamma w_{1}+\delta w_{2}),w_{2})\phi|\|_{\frac{1}{4(4s^{2}-1)}}^{2}
\lesssim
Q(\phi,\phi).
\]
Moreover, since $u(h_{1}(\alpha w_{1}+\beta w_{2},\gamma w_{1}+\delta w_{2}),w_{2})$ is a unit,
\[
\left\|\left|
\left(
w_{2}^{\lambda}
+
\sum_{j=1}^{\lambda-1}
a_{j}(h_{1}(\alpha w_{1}+\beta w_{2},\gamma w_{1}+\delta w_{2}))w_{2}^{j}
\right)
\phi
\right|\right\|_{\frac{1}{4(4s^{2}-1)}}^{2}
\lesssim
Q(\phi,\phi),
\]
where $\lambda\leqslant (4s^{2}-1)s$.

\subsubsection{}\label{cal-para-11}
To declutter notations, we will set
\begin{eqnarray*}
h_{1}(w_{1},w_{2}) &:=& h_{1}(\alpha w_{1}+\beta w_{2},\gamma w_{1}+\delta w_{2})\\
\frac{\partial h_{1}(w_{1},w_{2})}{\partial w_{1}}
&:=&
\frac{\partial h_{1}(\alpha w_{1}+\beta w_{2},\gamma w_{1}+\delta w_{2})}{\partial w_{1}}\\
\tilde{h}_{2}(w_{1},w_{2})
&:=&
\tilde{h}_{2}(\alpha w_{1}+\beta w_{2},\gamma w_{1}+\delta w_{2})\\
\eta &:=& (4s^{2}-1){\binom{8s+1}{8s-1}}\\
\lambda &:=& \ord_{0}\ h_{2}(0,y)\leqslant (4s^{2}-1)s.
\end{eqnarray*}
By Paragraph \ref{cal-para-8}, since
\[
h_{1}^{\eta}
\in 
\left\langle
\frac{\partial h_{1}}{\partial w_{1}},\ 
\tilde{h}_{2}
\right\rangle,
\]
there is an estimate
\[
|h_{1}^{\eta}|
\lesssim
\left|\frac{\partial h_{1}}{\partial w_{1}}
\right|
+
|\tilde{h}_{2}|.
\]

\subsection{Siu's method: Starting Point}
\subsubsection{} Since $h_{1}$ is a pre-multiplier and 
\[
dh_{1}\wedge dh_{2}
=
\frac{\partial h_{1}}{\partial w_{1}}
\left(
\lambda w_{2}^{\lambda-1}
+
\sum_{j=1}^{\lambda-1}
ja_{j}(h_{1})w_{2}^{j-1}
\right)
dw_{1}\wedge dw_{2},
\]
the holomorphic function 
\[
\frac{\partial h_{1}}{\partial w_{1}}
\left(
\lambda w_{2}^{\lambda-1}
+
\sum_{j=1}^{\lambda-1}
ja_{j}(h_{1})w_{2}^{j-1}
\right)
\]
is also a subelliptic multiplier and we will estimate its regularity property. Since 
\[
\left\|\left|
\left(w_{2}^{\lambda}
+
\sum_{j=0}^{\lambda-1}
a_{j}(h_{1})w_{2}^{j}
\right)\phi
\right|\right\|_{\frac{1}{4(4s^{2}-1)}}^{2}
\lesssim
Q(\phi,\phi),
\]
using Proposition \ref{Kohn-Property}(ii), 
\[
\left\|\left|
d\left(
w_{2}^{\lambda}
+
\sum_{j=0}^{\lambda-1}
a_{j}(h_{1})w_{2}^{j}
\right)\cdot \phi
\right|\right\|_{\frac{1}{8(4s^{2}-1)}}^{2}
\lesssim
Q(\phi,\phi).
\]
Also,
\[
\||dh_{1}\cdot \phi|\|_{\frac{1}{8(4s^{2}-1)}}^{2}
\lesssim
\||dh_{1}\cdot \phi|\|_{\frac{1}{4}}^{2}
\lesssim
Q(\phi,\phi),
\]
where the last inequality comes from Paragraph \ref{cal-para-2} and the fact that $h_{1}$ is a linear combination of the $F_{i}$. 
By Proposition \ref{Kohn-Property}(iii), the regularity of the subelliptic multiplier is obtained below
\[
\left\|\left|
\frac{\partial h_{1}}{\partial w_{1}}
\left(
\lambda w_{2}^{\lambda-1}
+
\sum_{j=1}^{\lambda-1}
ja_{j}(h_{1})w_{2}^{j-1}
\right)
\phi
\right|\right\|_{\frac{1}{8(4s^{2}-1)}}^{2}
\lesssim
Q(\phi,\phi).
\]

\subsubsection{} Since $\tilde{h}_{2}$ is also a subelliptic multiplier, so is
\[
\tilde{h}_{2}\left(
\lambda w_{2}^{\lambda-1}
+
\sum_{j=1}^{\lambda-1}
ja_{j}(h_{1})w_{2}^{j-1}
\right).
\]
Hence by paragraph \ref{cal-para-5},
\[
\left\|\left|
\tilde{h}_{2}\left(
\lambda w_{2}^{\lambda-1}
+
\sum_{j=1}^{\lambda-1}
ja_{j}(h_{1})w_{2}^{j-1}
\right)
\phi
\right\|\right|_{\frac{1}{4(4s^{2}-1)}}^{2}
\lesssim
Q(\phi,\phi).
\]

\subsubsection{} By the previous two paragraphs and the inequality in \ref{cal-para-11},
\[
|h_{1}^{\eta}|
\lesssim
\left|
\frac{\partial h_{1}}{\partial z_{1}}
\right|
+
|\tilde{h}_{2}|,
\]
there is an estimate
\begin{eqnarray*}
&& \left\|\left|
h_{1}^{\eta}\left(
\lambda w_{2}^{\lambda-1}
+
\sum_{j=1}^{\lambda-1}
ja_{j}(h_{1})w_{2}^{j-1}
\right)\phi
\right|\right\|_{\frac{1}{8(4s^{2}-1)}}^{2}\\
&\lesssim &
\left\|\left|
\frac{\partial h_{1}}{\partial w_{1}}
\left(
\lambda w_{2}^{\lambda-1}
+
\sum_{j=1}^{\lambda-1}
ja_{j}(h_{1})w_{2}^{j-1}
\right)\phi
\right|\right\|_{\frac{1}{8(4s^{2}-1)}}^{2}
+ \left\|\left|
\tilde{h}_{2}\left(
\lambda w_{2}^{\lambda-1}
+
\sum_{j=1}^{\lambda-1}
ja_{j}(h_{1})w_{2}^{j-1}
\right)\phi
\right|\right\|_{\frac{1}{8(4s^{2}-1)}}^{2}\\
&\lesssim & Q(\phi,\phi).
\end{eqnarray*}

\subsection{Siu's method: Inductive Step} 

\subsubsection{}
Let $h_{2}^{(0)}:=h_{2}$ and 
\[
h_{2}^{(1)}
:=
h_{1}^{\eta}\left(
\lambda w_{2}^{\lambda-1}
+
\sum_{j=1}^{\lambda-1}
ja_{j}(h_{1})w_{2}^{j-1}
\right).
\]
For $1\leqslant \nu\leqslant \lambda$, define
\[
h_{2}^{(\nu)}
:=
h_{1}^{\nu\eta}
\left(
\frac{\lambda!}{(\lambda-\nu)!}
w_{2}^{\lambda-\nu}
+
\sum_{j=\nu}^{\lambda-1}
\frac{j!}{(j-\nu)!}a_{j}(h_{1})
w_{2}^{j-\nu}
\right),
\]
which will be shown that it is also a subelliptic multiplier and 
\[
\||h_{2}^{(\nu)}
\phi|\|_{\frac{1}{2^{\nu}\cdot 4(4s^{2}-1)}}^{2}
\lesssim
Q(\phi,\phi).
\]

\subsubsection{} We will first calculate something analogous to the first paragraph of the previous subsection. Suppose that the induction statement is true for $\nu-1$, meaning that 
\[
\||h_{2}^{(\nu-1)}\phi|\|_{\frac{1}{2^{\nu-1}\cdot 4(4s^{2}-1)}}^{2}
\lesssim 
Q(\phi,\phi).
\]
Then 
\[
\||
dh_{2}^{(\nu-1)}\phi
|\|_{\frac{1}{2^{\nu}\cdot 4(4s^{2}-1)}}^{2}
\lesssim
Q(\phi,\phi).
\]
Moreover,
\[
\||dh_{1}\cdot \phi|\|_{\frac{1}{4}}^{2}\lesssim Q(\phi,\phi).
\]
Therefore,
\[
dh_{1}\wedge dh_{2}^{(\nu-1)}
=
\frac{\partial h_{1}}{\partial w_{1}}
\left(
h_{1}^{\eta(\nu-1)}
\left(
\frac{\lambda!}{(\lambda-\nu)!}
w_{2}^{\lambda-\nu}
+
\sum_{j=\nu}^{\lambda-1}
\frac{j!}{(j-\nu)!}
a_{j}(h_{1})w_{2}^{j-\nu}
\right)
\right)
dw_{1}\wedge dw_{2},
\]
whose coefficient is also a subelliptic multiplier with 
\[
\left\|\left|
\frac{\partial h_{1}}{\partial w_{1}}
\left(
h_{1}^{\eta(\nu-1)}
\left(
\frac{\lambda!}{(\lambda-\nu)!}
w_{2}^{\lambda-\nu}
+
\sum_{j=\nu}^{\lambda-1}
\frac{j!}{(j-\nu)!}
a_{j}(h_{1})w_{2}^{j-\nu}
\right)
\right)\phi
\right|\right\|_{\frac{1}{2^{\nu}\cdot 4(4s^{2}-1)}}^{2}
\lesssim
Q(\phi,\phi).
\]

\subsubsection{} Since $\tilde{h}_{2}$ is also a subelliptic multiplier, 
\[
\left\|\left|
\tilde{h}_{2}
\left(
h_{1}^{\eta(\nu-1)}
\left(
\frac{\lambda!}{(\lambda-\nu)!}
w_{2}^{\lambda-\nu}
+
\sum_{j=\nu}^{\lambda-1}
\frac{j!}{(j-\nu)!}
a_{j}(h_{1})w_{2}^{j-\nu}
\right)
\right)\phi
\right|\right\|_{\frac{1}{2^{\nu}\cdot 4(4s^{2}-1)}}^{2}
\lesssim
Q(\phi,\phi).
\]

\subsubsection{} Combining the inequalities in the last two paragraphs, and using the fact that $|h_{1}^{\eta}|\lesssim |\partial_{z_{1}}h_{1}|+|\tilde{h}_{2}|$,
\begin{eqnarray*}
&& \||h_{2}^{(\nu)}\phi|\|_{\frac{1}{2^{\nu}\cdot 4(4s^{2}-1)}}^{2}\\
&=& \left\|\left|
h_{1}^{\nu\eta}
\left(
\frac{\lambda!}{(\lambda-\nu)!}
w_{2}^{\lambda-\nu}
+
\sum_{j=\nu}^{\lambda-1}
\frac{j!}{(j-\nu)!}
a_{j}(h_{1})w_{2}^{j-\nu}
\right)\phi
\right|\right\|_{\frac{1}{2^{\nu}\cdot 4(4s^{2}-1)}}^{2}\\
&=& \left\|\left|
h_{1}^{\eta}\left(h_{1}^{\eta(\nu-1)}
\left(
\frac{\lambda!}{(\lambda-\nu)!}
w_{2}^{\lambda-\nu}
+
\sum_{j=\nu}^{\lambda-1}
\frac{j!}{(j-\nu)!}
a_{j}(h_{1})w_{2}^{j-\nu}
\right)\right)\phi
\right|\right\|_{\frac{1}{2^{\nu}\cdot 4(4s^{2}-1)}}^{2}\\
&\lesssim & 
\left\|\left|
\frac{\partial h_{1}}{\partial w_{1}}
\left(h_{1}^{\eta(\nu-1)}
\left(
\frac{\lambda!}{(\lambda-\nu)!}
w_{2}^{\lambda-\nu}
+
\sum_{j=\nu}^{\lambda-1}
\frac{j!}{(j-\nu)!}
a_{j}(h_{1})w_{2}^{j-\nu}
\right)\right)\phi
\right|\right\|_{\frac{1}{2^{\nu}\cdot 4(4s^{2}-1)}}^{2}\\
&& +
\left\|\left|
\tilde{h}_{2}\left(h_{1}^{\eta(\nu-1)}
\left(
\frac{\lambda!}{(\lambda-\nu)!}
w_{2}^{\lambda-\nu}
+
\sum_{j=\nu}^{\lambda-1}
\frac{j!}{(j-\nu)!}
a_{j}(h_{1})w_{2}^{j-\nu}
\right)\right)\phi
\right|\right\|_{\frac{1}{2^{\nu}\cdot 4(4s^{2}-1)}}^{2}\\
&\lesssim & 
Q(\phi,\phi).
\end{eqnarray*}
This finishes the induction process.

\subsubsection{} Setting $\nu=\lambda$, we get
\[
\||h_{2}^{(\lambda)}\phi|\|_{\frac{1}{2^{\lambda}\cdot 4(4s^{2}-1)}}^{2}
\lesssim 
Q(\phi,\phi).
\]
But $h_{2}^{(\lambda)}=h_{1}^{\eta\lambda}
\lambda!$, therefore
\[
\||h_{1}^{\lambda\eta}\phi
|\|_{\frac{1}{2^{\lambda}\cdot 4(4s^{2}-1)}}^{2}
\lesssim
Q(\phi,\phi).
\]
\subsection{Siu's method: Conclusion and End of Calculation}
\subsubsection{} Since 
\[
\dim_{\mathbb{C}}\ 
\mathcal{O}_{\mathbb{C}^{2},0}
\big/
\langle h_{1},\tilde{h}_{2}\rangle
\leqslant (4s^{2}-1)s,
\]
by Proposition \ref{LAG-prop-intersectionnumberofproductfg=sumofintersection},
\[
\dim_{\mathbb{C}}\ 
\mathcal{O}_{\mathbb{C}^{2},0}
\big/
\langle h_{1}^{\lambda\eta},\tilde{h}_{2}\rangle
=
\eta\lambda\ 
\dim_{\mathbb{C}}\ 
\mathcal{O}_{\mathbb{C}^{2},0}
\big/
\langle h_{1},\tilde{h}_{2}\rangle
\leqslant
(4s^{2}-1)s\eta\lambda.
\]
For $i=1,\ 2$, by the  Lemma \ref{LAG-lem-if h(0)=0 then h^s is in the ideal},
\[
w_{i}^{(4s^{2}-1)s\lambda\eta}
\in
\left\langle
h_{1}^{\lambda\eta},
\tilde{h}_{2}
\right\rangle.
\]
Thus $w_{i}^{(4s^{2}-1)s\lambda\eta}$ is also a multiplier with 
\[
|w_{i}^{(4s^{2}-1)s\lambda\eta}|
\lesssim 
|h_{1}^{\eta\lambda}|
+
|\tilde{h}_{2}|.
\]
Hence
\begin{eqnarray*}
&& \||w_{i}^{(4s^{2}-1)s\lambda\eta}\phi|\|_{\frac{1}{2^{\lambda}\cdot 4(4s^{2}-1)}}^{2}\\
&\lesssim & 
\||h_{1}^{\eta\lambda}\phi|\|_{\frac{1}{2^{\lambda}\cdot 4(4s^{2}-1)}}^{2}
+
\||\tilde{h}_{2}\phi|\|_{\frac{1}{2^{\lambda}\cdot 4(4s^{2}-1)}}^{2}\\
&\lesssim & 
\||h_{1}^{\eta\lambda}\phi|\|_{\frac{1}{2^{\lambda}\cdot 4(4s^{2}-1)}}^{2}
+
\||\tilde{h}_{2}\phi|\|_{\frac{1}{4(4s^{2}-1)}}^{2}
\lesssim Q(\phi,\phi).
\end{eqnarray*}
By radical property of subelliptic multipliers Proposition \ref{Kohn-Property}(i), one has for each $i=1,\ 2$ that
\[
\||w_{i}\phi|\|_{\frac{1}{2^{\lambda}\cdot 4s\eta\lambda(4s^{2}-1)^{2}}}^{2}
\lesssim 
Q(\phi,\phi).
\]
Taking the Jacobian, one obtains by Propositions \ref{Kohn-Property}(ii) and \ref{Kohn-Property}(iii) that 
\[
\||\phi|\|_{\frac{1}{2^{\lambda+1}\cdot 4s\eta\lambda(4s^{2}-1)^{2}}}^{2}
\lesssim 
Q(\phi,\phi),
\]
and this terminates the calculation.

\nocite{*}

\bibliographystyle{alpha}
\bibliography{ee-version-3.bib}


\end{document}